\algrenewcommand\algorithmicrequire{\textbf{Input:}}
\algrenewcommand\algorithmicensure{\textbf{Output:}}
\newcommand{\inv}{\mathsf{inv}}
\newcommand{\maj}{\mathsf{maj}}
\newtheorem{theorem}{Theorem}[section]
\newtheorem{lemma}[theorem]{Lemma}
\newtheorem{proposition}[theorem]{Proposition}
\newtheorem{corollary}[theorem]{Corollary}
\newtheorem{conjecture}[theorem]{Conjecture}
\newenvironment{manualtheorem}[1]{%
	\manualtheoreminner
}{\endmanualtheoreminner}
\newenvironment{manualconj}[1]{%
	\manualconjinner
}{\endmanualconjinner}
\theoremstyle{definition}
\newtheorem{definition}[theorem]{Definition}
\newtheorem{example}[theorem]{Example}
\theoremstyle{remark}
\newtheorem{remark}[theorem]{Remark}
\numberwithin{equation}{section}
\title{Chromatic functions, interval orders and increasing forests}
\author{Michele D'Adderio}
\address{Universit\`a di Pisa\\Dipartimento di Matematica\\ Largo Bruno Pontecorvo 5, 56127 Pisa\\ Italy}\email{michele.dadderio@unipi.it}
\author{Roberto Riccardi}
\address{Scuola Normale Superiore\\
	Piazza dei Cavalieri 7,
	56126 Pisa\\ Italy}\email{roberto.riccardi@sns.it}
\author{Viola Siconolfi}
\address{Politecnico di Bari\\Dipartimento di Meccanica, Matematica e Managment\\ Via Orabona 4, 70125 Bari\\ Italy}\email{viola.siconolfi@poliba.it}
\begin{document}
	
\begin{abstract}
The chromatic quasisymmetric functions (csf) of Shareshian and Wachs associated to unit interval orders have attracted a lot of interest since their introduction in 2016, both in combinatorics and geometry, because of their relation to the famous Stanley-Stembridge conjecture (1993) and to the topology of Hessenberg varieties, respectively. 

In the present work we study the csf associated to the larger class of interval orders with no restriction on the length of the intervals. Inspired by an article of Abreu and Nigro, we show that these csf are weighted sums of certain quasisymmetric functions associated to the increasing spanning forests of the associated incomparability graphs. Furthermore, we define quasisymmetric functions that include the unicellular LLT symmetric functions and generalize an identity due to Carlsson and Mellit. Finally we conjecture a formula giving their expansion in the type 1 power sum quasisymmetric functions which should extend a theorem of Athanasiadis.  
\end{abstract}

\maketitle
\tableofcontents

\section{Introduction}

In \cite{Shareshian_Wachs_Advances} Shareshian and Wachs introduced the \emph{chromatic quasisymmetric function} $\chi_G[X;q]$ associated to every graph $G$ whose vertices are totally ordered. At $q=1$ the series $\chi_G[X;q]$ reduces to the well-known chromatic symmetric function $\chi_G[X;1]=\chi_G(x)$ introduced by Stanley in \cite{Stanley_Chrom_Sym}. A famous conjecture of Stanley and Stembridge (\cite{Stanley_Chrom_Sym}*{Conjecture~5.1}, \cite{Stanley_Stembridge}*{Conjecture~5.5}) states that if $G$ is the incomparability graph of a $(\mathbf{3}+\mathbf{1})$-free poset, then $\chi_G[X;1]$ is $e$-positive, i.e.\ its expansion in the elementary symmetric functions has coefficients in $\mathbb{N}$. Shareshian and Wachs showed (cf.\ \cite{Shareshian_Wachs_Advances}*{Theorem~4.5}) that if $G$ is the incomparability graph of a poset that is both $(\mathbf{3}+\mathbf{1})$-free and $(\mathbf{2}+\mathbf{2})$-free, then $\chi_G[X;q]$ is a symmetric function, and they conjecture that it is $e$-positive, i.e.\ its expansion in the elementary symmetric functions has coefficients in $\mathbb{N}[q]$. Thanks to a result of Guay-Paquet \cite{guaypaquet_modular_law}, it is known that the Shareshian-Wachs conjecture implies the Stanley-Stembridge conjecture.

The posets that are  $(\mathbf{3}+\mathbf{1})$-free and $(\mathbf{2}+\mathbf{2})$-free are precisely the \emph{unit interval orders} (see \cite{Scott_Suppes}), whose elements are intervals in $\mathbb{R}$ of the same length, and an interval $a$ is smaller than an interval $b$ if all the points of $a$ are strictly smaller than all the points of $b$. If in such a poset we order the intervals increasingly according to their left endpoints, then we get a total order on them, and now the incomparability graphs of these posets will inherit this total order on the vertices, giving the labelled graphs $G$ involved in the Shareshian-Wachs conjecture. In the present article we will call these labelled graphs \emph{Dyck graphs}, as they are in a natural bijection with Dyck paths: see Figure~\ref{fig:Dyck_graph} for an example.

Since their introduction, the symmetric functions $\chi_G[X;q]$ of Dyck graphs have been extensively studied, not only for their connection to the Stanley-Stembridge conjecture (cf.\ \cite{Huh_Nam_Yoo,Abreu_Nigro_Modular_Law,Skandera,Cho_Hong,Nadeau_Tewari_Downup,Colmenarejo_Morales_Panova}, just to mention a few recent articles), but also for their connection to the topology of Hessenberg varieties: for the latter see for example \cite{Shareshian_Wachs_Advances,guaypaquet_second_pf,Brosnan_Chow,Abe_Horiguchi_Survey}. We will not say any more things about the relation to geometry, and we will limit ourselves to recall here a few results, belonging more properly to algebraic combinatorics, that will be relevant to the present article.

\smallskip

In \cite{Carlsson-Mellit-ShuffleConj-2015}, Carlsson and Mellit proved a surprising formula relating the symmetric functions $\chi_G[X;q]$ of Dyck graphs to the so called \emph{unicellular LLT symmetric functions} $\mathrm{LLT}_G[X;q]$ (see e.g.\ \cite{Alexandersson-Panova}*{Section~3} for their connection with the original symmetric functions introduced by Lascoux, Lecrerc and Thibon), which can be restated as follows using \emph{plethystic notation} (for which we refer to \cite{Loehr-Remmel-plethystic-2011}).

\begin{theorem}[Carlsson, Mellit]
If $G$ is a Dyck graph on $n$ vertices, then
\begin{equation} \label{eq:CM_identity}
(1-q)^n  \omega \chi_G\left[X\frac{1}{1-q}\right] =\mathrm{LLT}_G[X;q].
\end{equation}
\end{theorem}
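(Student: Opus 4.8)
The plan is to prove \eqref{eq:CM_identity} by reducing it, via the modular relations for Dyck graphs, to a single base computation with complete graphs. Recall the combinatorial models: writing $[n]$ for the vertex set of $G$,
\[
\chi_G[X;q]=\sum_{\kappa\text{ proper}}q^{\asc(\kappa)}x^{\kappa},
\qquad
\mathrm{LLT}_G[X;q]=\sum_{\kappa}q^{\asc(\kappa)}x^{\kappa},
\]
where the first sum runs over proper colorings $\kappa\colon[n]\to\mathbb Z_{>0}$ and the second over all colorings, and in both cases $\asc(\kappa)=\#\{\,\{i,j\}\in E(G):i<j,\ \kappa(i)<\kappa(j)\,\}$; both series are symmetric because $G$ is a Dyck graph. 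Set $\Psi_G:=(1-q)^n\,\omega\,\chi_G\!\left[\tfrac X{1-q}\right]$, so that the claim is $\Psi_G=\mathrm{LLT}_G[X;q]$ for every Dyck graph $G$.

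First I would check the identity when $G=K_m$ is a complete graph. A proper coloring of $K_m$ uses $m$ distinct colors, so summing $q^{\asc}$ over their arrangements gives $\chi_{K_m}[X;q]=[m]_q!\,e_m$; then, applying $\omega$ and the plethysm and using $\tfrac1{1-q}=\sum_{j\ge0}q^j$ together with $\prod_{j\ge0}(1-q^j u)^{-1}=\sum_{k\ge0}u^k/(q;q)_k$, one gets $(1-q)^m h_m\!\left[\tfrac X{1-q}\right]=\sum_{\alpha}\frac{x^{\alpha}}{\prod_i[\alpha_i]_q!}$ over weak compositions $\alpha$ of $m$, hence $\Psi_{K_m}=\sum_{\alpha}\dqbinom{m}{\alpha}_q x^{\alpha}$ with $q$-multinomial coefficients. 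On the other hand $\mathrm{LLT}_{K_m}[X;q]=\sum_{\alpha}\dqbinom{m}{\alpha}_q x^{\alpha}$ as well, since $\asc$ on a word of fixed content is Mahonian. As $\chi$, $\mathrm{LLT}$ and $\Psi$ all send a disjoint union of Dyck graphs to the product of the corresponding functions, this settles the case in which $G$ is a disjoint union of complete graphs; note moreover that the functions $\chi_{G_\lambda}=\big(\prod_i[\lambda_i]_q!\big)\,e_\lambda$, $\lambda\vdash n$, already form a $\mathbb Q(q)$-basis of the degree-$n$ part of the ring of symmetric functions.

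Next I would propagate the identity along the modular law. By \cite{guaypaquet_modular_law}, for Dyck graphs on a fixed vertex set $[n]$ the functions $\chi_G$ satisfy a family of three-term linear relations attached to local modifications of the underlying Dyck path, and these relations, together with the values on disjoint unions of complete graphs, determine $\chi_G$ for every Dyck graph $G$. Since $(1-q)^n\,\omega\,(-)\!\left[\tfrac X{1-q}\right]$ is $\mathbb Q(q)$-linear and $n$ is constant within each relation, $G\mapsto\Psi_G$ satisfies the same relations; and $G\mapsto\mathrm{LLT}_G[X;q]$ does too (the unicellular LLT polynomials obey the same modular relations; see \cite{Abreu_Nigro_Modular_Law}). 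As the two families agree on the base cases of the previous paragraph, they agree everywhere, which is \eqref{eq:CM_identity}.

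The crux is the input to this last step: one must know --- independently of \eqref{eq:CM_identity}, to avoid circularity --- that $\mathrm{LLT}_G$ obeys exactly the \cite{guaypaquet_modular_law} relations, with the same coefficients as $\chi_G$. This should follow from a direct examination of how $\asc$ changes across the local move defining a relation, but it is the delicate point; a secondary technical matter is the induction (say on the area of the Dyck path) showing that those relations do reduce every Dyck graph to the clique-union family. If one prefers to bypass the modular machinery entirely, the alternative is Carlsson and Mellit's original route through the operators of the Dyck path algebra, proving that the two sides of \eqref{eq:CM_identity} satisfy a common recursion in $n$; that argument is self-contained but computationally much heavier.
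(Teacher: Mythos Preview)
Your approach is essentially correct, but it is genuinely different from the paper's.

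The paper does not prove \eqref{eq:CM_identity} directly; it proves the more general Theorem~\ref{thm:main_theorem} for arbitrary interval graphs, which specializes to \eqref{eq:CM_identity} when $G$ is a Dyck graph (since then $\chi_G$ is symmetric, $\rho$ acts trivially, and $\psi$ restricts to $\omega$). That proof proceeds by expanding $\chi_G$ in fundamental quasisymmetric functions (Corollary~\ref{cor:chrom_fundamentals}), applying an explicit plethystic Cauchy formula for $L_\alpha[XY]$ (Proposition~\ref{prop:L_Cauchy} and Lemma~\ref{lem:L_ps}), and then invoking the key identity Theorem~\ref{thm:MainFormula}, whose base case $\beta=(n)$ is a Mahonian statement (Theorem~\ref{thm:MainFormula_beta_(n)}) established via Kasraoui's modified Foata bijection. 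No modular law appears anywhere.

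Your route---verifying the identity on disjoint unions of complete graphs and then propagating via the modular relations that both $\chi_G[X;q]$ and $\mathrm{LLT}_G[X;q]$ satisfy---is legitimate, and the worry about circularity is unfounded: Abreu and Nigro establish the $q$-modular law for both families directly from the coloring definitions, independently of \eqref{eq:CM_identity}, and they also prove that any multiplicative function satisfying these relations is determined by its values on the $K_{\lambda_1}\sqcup\cdots\sqcup K_{\lambda_\ell}$. (Note that \cite{guaypaquet_modular_law} is the $q=1$ statement; for the $q$-deformed relations you need \cite{Abreu_Nigro_Modular_Law}.) Your base computation is correct. One small discrepancy: you define $\mathrm{LLT}_G$ with $\asc=\mathsf{coinv}_G$, whereas the paper uses $\mathsf{inv}_G$; for Dyck graphs the two agree as symmetric functions because the flip $i\mapsto n+1-i$ preserves $G$ and swaps the two statistics, but you should flag this.

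What each approach buys: your argument is short and structural once the modular machinery is in place, but it is confined to Dyck graphs, since the modular law has no analogue for general interval graphs. The paper's argument is longer and more computational, but it works uniformly for all interval graphs and yields the genuinely new quasisymmetric identity \eqref{eq:main_identity}; it also gives an independent proof of \eqref{eq:CM_identity} that does not rely on the modular law or on Carlsson--Mellit's Dyck path algebra recursion.
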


In \cite{Abreu_Nigro_Forests}, Abreu and Nigro prove a deep formula for $\chi_G[X;q]$ of Dyck graph $G$ as a weighted sum over the \emph{increasing spanning forests}  of $G$. The role of the increasing spanning forests in the combinatorics of these graphs was already highlighted in \cite{Hallam_Sagan}, where in fact the authors work with more general graphs. Abreu and Nigro combined their formula with the identity \eqref{eq:CM_identity} of Carlsson and Mellit to give a new proof of a formula for the expansion of $\mathrm{LLT}_G[X;q+1]$ (here $G$ is a Dyck graph) in the elementary symmetric function basis that was conjectured by Alexandersson \cite{Alexandersson_LLT_conj} and independently by Garsia, Haglund, Qiu and Romero \cite{Garsia_Haglund_Qiu_Romero}, and first proved by Alexandersson and Sulzgruber \cite{Alexandersson_Sulzgruber} (the $e$-positivity was already proved in \cite{DAdderio_e_positivity}).

Finally, in \cite{Athanasiadis} Athanasiadis proved a combinatorial formula (cf.\ Theorem~\ref{thm:athanasiadis}) giving the expansion of $\chi_G[X;q]$ (when $G$ is a Dyck graph) in the power symmetric functions, that was conjectured in \cite{Shareshian_Wachs_Advances}*{Conjecture~7.6}. The starting point of his proof is the formula \cite{Shareshian_Wachs_Advances}*{Theorem~6.3} giving the expansion of $\chi_G[X;q]$ (when $G$ is a Dyck graph) in the Schur function basis.

We now turn to the contributions of the present article (all the missing definitions will be provided later in the text).

\smallskip

If from the definition of unit interval orders we drop the condition on the intervals to have all the same length, then we get the \emph{interval orders}, and these are precisely the $(\mathbf{2}+\mathbf{2})$-free posets (see \cite{Fishburn}). If we order again such intervals increasingly according to their left endpoints, then we get a total order on them, and now the incomparability graphs of these posets will inherit this total order on the vertices: these labelled graphs $G$ will be called (somewhat improperly) \emph{interval graphs} in this article, and their chromatic quasisymmetric functions $\chi_G[X;q]$ are the object of our study: see Figure~\ref{fig:interval_graph} for an example.

First of all we observe that Dyck graphs are interval graphs, but typically if $G$ is an interval graph that is not a Dyck graph, then $\chi_G[X;q]$ is not a symmetric function. Hence we will be naturally working with quasisymmetric functions.

Inspired by the work of Abreu and Nigro \cite{Abreu_Nigro_Forests}, given an interval graph $G$ we will define a surjective function $\Phi_G$ from the (infinite) set $\mathsf{PC}(G)$ of proper colorings  $\kappa:V(G)\to \mathbb{Z}_{>0}$ of $G$ to the (finite) set $\mathsf{ISF}(G)$ of the increasing spanning forests of $G$ (see Algorithm~\ref{alg:main}). This will allow us to define for every forest $F\in \mathsf{ISF}(G)$ the associated series
\[\mathcal{Q}_F^{(G)}:=\mathop{\sum_{\kappa\in \mathsf{PC}(G)}}_{\Phi_G(\kappa)=F}x_\kappa,\]
where $x_\kappa=\prod_{v\in V(G)}x_{\kappa(v)}$ and $x_1,x_2,\dots$ are variables.

We will show that these are quasisymmetric functions, and we will prove the following formula for their expansion in the fundamental (Gessel) quasisymmetric function basis $\{L_{n,S}\mid n\in \mathbb{N},S\subseteq [n-1]\}$.
\begin{manualtheorem}{\ref{thm:forest_fundamental}} 
	Given an interval graph $G$ on $n$ vertices and $F\in \mathsf{ISF}(G)$, we have
\begin{equation*}
\mathcal{Q}_F^{(G)}=\mathop{\sum_{\sigma\in \mathfrak{S}_n}}_{\mathsf{CoInv}_G(\sigma)=\mathsf{CoInv}_G(F)}L_{n,\mathsf{Des}_G(\sigma^{-1})}.
\end{equation*}
\end{manualtheorem}
Using the weight $\mathsf{wt}_G(F)$ introduced in \cite{Abreu_Nigro_Forests} (in fact a slight modification), we will prove the following formula for $\chi_G[X;q]$ of an interval graph $G$:
\begin{manualtheorem}{\ref{thm:chrom_forests_formula}} 
	Given an interval graph $G$, we have
\begin{equation*}
\chi_G[X;q]=\sum_{F\in \mathsf{ISF}(G)}q^{\mathsf{wt}_G(F)}\mathcal{Q}_F^{(G)}.
\end{equation*}
\end{manualtheorem}

For every simple graph $G$ with totally ordered vertices we introduce the quasisymmetric function 
\[\mathrm{LLT}_G[X;q]:=\sum_{\kappa \in \mathsf{C}(G)}q^{\mathsf{inv}_G(\kappa)}x_\kappa,\]
where the sum is over all (not necessarily proper) colorings $\kappa:V(G)\to \mathbb{Z}_{>0}$ of $G$. When $G$ is a Dyck graph, these are precisely the corresponding unicellular LLT symmetric functions. For interval graphs that are not Dyck graphs, these are typically not symmetric functions, but always quasisymmetric. Indeed the following general formula holds.
\begin{manualtheorem}{\ref{thm:LLT_fundamental}}
	Given any simple graph $G$ on $n$ vertices which are totally ordered, we have
	\[\mathrm{LLT}_G[X;q]=\sum_{\sigma\in \mathfrak{S}_n}q^{\mathsf{inv}_G(\sigma)}L_{n,\mathsf{Des}(\sigma^{-1})}.\]
\end{manualtheorem}
In the previous theorem the ``$L$'' are the fundamental (Gessel) quasisymmetric functions.

Recall the well-known involutions $\rho$ and $\psi$ of the algebra $\mathrm{QSym}$ of quasisymmetric functions defined by $\psi(L_{\alpha}):=L_{\alpha^c}$ and $\rho(L_{\alpha})=L_{\alpha^r}$. 

The main result of this article is the following theorem.

\begin{manualtheorem}{\ref{thm:main_theorem}}
	Given $G$ an interval graph on $n$ vertices, we have
	\begin{equation*} 
	(1-q)^n \rho \left(\psi \chi_G\left[X\frac{1}{1-q}\right]\right)=\mathrm{LLT}_G[X;q].
	\end{equation*}
\end{manualtheorem}

Notice that the plethysm of quasisymmetric functions needs a careful definition, since it depends on the order on the alphabet (cf.\ \cite{Loehr-Remmel-plethystic-2011}).

Since $\psi$ restricts to the involution $\omega$ on the algebra $\mathrm{Sym}\subset \mathrm{QSym}$ of symmetric functions, $\rho$ restricts to the identity operator on $\mathrm{Sym}$, and the plethysm of quasisymmetric functions restricts to the plethysm of symmetric functions, this is indeed a generalization of the identity \eqref{eq:CM_identity} of Carlsson and Mellit. We remark that our argument gives an independent proof of that same identity. 

Our proof of Theorem~\ref{thm:main_theorem} is based on a formula which in turn relies on a result of Kasraoui \cite{Kasraoui_maj-inv}.

In \cite{Ballantine_et_al} the authors study a family of quasisymmetric functions that they call \emph{type 1 quasisymmetric power sums}, and they denote $\Psi_\alpha$. Actually $\{\Psi_\alpha\mid \alpha\text{ composition}\}$ is a basis of $\mathrm{QSym}$ that refines the power symmetric function basis.

We state the following conjecture for the expansion of our $\mathcal{Q}_{F}^{(G)}$ in the $\Psi_\alpha$.

\begin{manualconj}{\ref{conj:forest_psi_expansion}}
	For any interval graph $G$ on $n$ vertices and any permutation $\tau\in \mathfrak{S}_n$ (thought of as a proper coloring of $G$) we have
	\begin{equation*} 
	\rho \psi \mathcal{Q}_{\Phi_G(\tau)}^{(G)}=\sum_{\alpha\vDash n}\frac{\Psi_\alpha}{z_{\alpha}}\# \{ \sigma \in \mathcal{N}_{G,\alpha}\mid \mathsf{CoInv}_G(\sigma^{-1})=\mathsf{Inv}_G(\tau)\}.
	\end{equation*}
\end{manualconj}
We will show that this conjecture implies the following one, which is supposed to provide an extension of the formula proved by Athanasiadis in \cite{Athanasiadis}.

\begin{manualconj}{\ref{conj:interval_psi_exp}}
	For any interval graph $G$ on $n$ vertices we have
	\[\rho\psi \chi_G[X;q]=\sum_{\alpha\vDash n}\frac{\Psi_\alpha}{z_{\alpha}}\sum_{\sigma \in \mathcal{N}_{G,\alpha}}q^{\widetilde{\mathsf{inv}}_G(\sigma)}.\]
\end{manualconj}

\medskip

The rest of the present article is organized in the following way. In Section~2 we introduce both the interval graphs and the Dyck graphs, and clarify their relation with interval orders. In Section~3 we recall some background from quasisymmetric and symmetric functions. In particular we will introduce our plethysm of quasisymmetric functions and we will provide a couple of formulas that we will need later. In Section~4 we prove two basic formulas about colorings and their inversions, while in Section~5 we analyze the case of interval graphs, proving related formulas for proper colorings and (co)inversions. In Section~6 we introduce the increasing spanning forests of interval graphs $G$, their weight $\mathsf{wt}_G$, the function $\Phi_G$, and we study the first properties of the quasisymmetric functions $\mathcal{Q}_F^{(G)}$, including Theorem~\ref{thm:forest_fundamental}. In Section~7 we introduce chromatic quasisymmetric functions and LLT quasisymmetric functions associated to interval graphs, and we apply results from previous sections to prove Theorem~\ref{thm:chrom_forests_formula} and Theorem~\ref{thm:LLT_fundamental}. In Section~8 we prove a fundamental formula that is at the heart of our proof of Theorem~\ref{thm:main_theorem}, which is based on a result of Kasraoui \cite{Kasraoui_maj-inv} that we will explain in Section~9 for completeness.  In Section~10 we will use the formula from Section~8 and several results from previous sections to deduce Theorem~\ref{thm:main_theorem}. In Section~11 we will state Conjecture~\ref{conj:forest_psi_expansion} and Conjecture~\ref{conj:interval_psi_exp}, and show how the former implies the latter. Finally in Section~12 we add some speculative comments.

\medskip

\noindent \emph{Acknowledgements}. This article stemmed from the suspicion that the identity \eqref{eq:CM_identity} of Carlsson and Mellit could hold in greater generality. Once we noticed that for the interval graphs there was a generalization, we realized that these graphs were mentioned in the introduction of the article of Abreu and Nigro. We thank these four authors for their insightful mathematical works. We are happy to thank also Philippe Nadeau for sharing with us his unpublished work with Vasu Tewari suggesting the direct arguments in Sections~4 and 5.

\section{Interval graphs}

In this article a graph will always be simple, i.e.\ no loops and no multiple edges.

For every positive integer $n\in \mathbb{Z}_{>0}:=\{1,2,3,\dots\}$ we will use the notation $[n]:=\{1,2,\dots,n\}$. 

Consider a \emph{graph} $G=([n],E)$, where $E=E(G)$ is the set of \emph{edges} of $G$, while $[n]=V(G)$ is the set of \emph{vertices} of $G$. 
\begin{remark}
We can think of our $G=([n],E)$ as a graph on $n$ vertices that are \emph{labelled} by the elements of $[n]$, each appearing exactly once. In fact in this work we will simply use the fact that the vertices of $G$ have a fixed total order. We prefer to work directly with $V(G)=[n]$, in order to keep the notation lighter. In particular we will systematically omit the word \emph{labelled}, and we will talk simply about \emph{graphs}. A more precise name would probably be ``naturally labelled graphs'', but its systematic use would make the reading much more unpleasant.
\end{remark}
We will think of an edge of $G$ both as a $2$-subset $\{i,j\}\in E$ and as an ordered pair $(i,j)\in E$ with $i<j$.

In this work a (\emph{labelled}) \emph{graph} $G=([n],E)$ will be called \emph{interval} if whenever $\{i,j\}\in E$ and $i<j$, then $\{i,k\}\in E$ for every $i<k\leq j$. We will call $\mathcal{IG}_n$ the set of all interval graphs with vertex set $[n]$.
\begin{remark}
Again, recall that our graphs are actually \emph{labelled}, so for example the graphs $([3],\{(1,2)\})$ and $([3],\{(2,3)\})$ are two distinct elements of $\mathcal{IG}_3$, though they are clearly isomorphic as abstract graphs.

The name interval graphs will be justified shortly in this section.
\end{remark}

We can represent an interval graph $G=([n],E)$ in the following way: in a $n\times n$ square grid we order the columns from left to right with numbers $1,2,\dots,n$ and similarly the rows from bottom to top; then we color the cells $(i,j)\in E$ (hence $i<j$). See Figure~\ref{fig:interval_graph} for an example.

\begin{figure*}[!ht]
	\begin{tikzpicture}[scale=0.5]
		\draw[gray!60, thin](0,0) grid (8,8);
		\filldraw[red, opacity=0.3] (1,2) -- (2,2)--(2,7)--(1,7)-- cycle;
		\filldraw[red, opacity=0.3] (0,1) -- (0,3)--(1,3)--(1,1)-- cycle;
		\filldraw[red, opacity=0.3] (2,3) -- (2,6)--(3,6)--(3,3)-- cycle;
		\filldraw[red, opacity=0.3] (4,5) -- (4,7)--(5,7)--(5,5)-- cycle;
		\filldraw[red, opacity=0.3] (5,6) -- (5,8)--(7,8)--(7,7)--(6,7)--(6,6)-- cycle;
		\node at (0.5,0.5) {$1$};	
		\node at (1.5,1.5) {$2$};
		\node at (2.5,2.5) {$3$};
		\node at (3.5,3.5) {$4$};
		\node at (4.5,4.5) {$5$};
		\node at (5.5,5.5) {$6$};
		\node at (6.5,6.5) {$7$};
		\node at (7.5,7.5) {$8$};
		
	\end{tikzpicture}
	\caption{The interval graph $G=([8],\{(1,2),(1,3),(2,3),(2,4),(2,5),(2,6),(2,7),$ $(3,4),(3,5),(3,6),(5,6),(5,7),(6,7),(6,8),(7,8)\})$.} \label{fig:interval_graph}
\end{figure*}
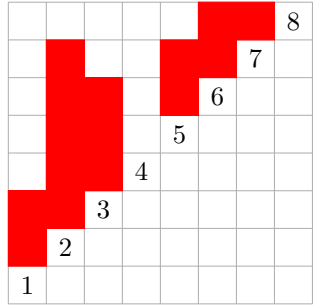

Notice that in this pictures we simply obtain a bunch of (possibly empty) colored columns, starting just above the diagonal cells. Hence clearly $|\mathcal{IG}_n|=n!$.

Given an interval graph $G\in \mathcal{IG}_n$, we can consider its \emph{flipped}, obtained from $G$ by replacing each edge $\{i,j\}$ with an edge $\{n+1-i,n+1-j\}$: in terms of pictures, this corresponds to flip the picture of $G$ around the line $y=-x$.

An interval graph  $G\in \mathcal{IG}_n$ such that its flipped is still in $\mathcal{IG}_n$ is called a \emph{Dyck graph}. The explanation of the name is obvious, since the picture of a Dyck graph determines a Dyck path: see Figure~\ref{fig:Dyck_graph} for an example. Hence, if we denote the set of Dyck graphs with $n$ vertices by $\mathcal{DG}_n$, then clearly $|\mathcal{DG}_n|$ is the $n$-the Catalan number $\binom{2n}{n}/(n+1)$. 

\begin{figure*}[!ht]
	\begin{tikzpicture}[scale=0.5]
		\draw[gray!60, thin](0,0) grid (8,8);
		\filldraw[red, opacity=0.3] (1,2) -- (2,2)--(2,4)--(1,4)-- cycle;
		\filldraw[red, opacity=0.3] (0,1) -- (0,3)--(1,3)--(1,1)-- cycle;
		\filldraw[red, opacity=0.3] (2,3) -- (2,4)--(3,4)--(3,3)-- cycle;
		\filldraw[red, opacity=0.3] (4,5) -- (4,7)--(5,7)--(5,5)-- cycle;
		\filldraw[red, opacity=0.3] (5,6) -- (5,7)--(6,7)--(6,8)--(7,8)--(7,7)--(6,7)--(6,6)-- cycle;
		\draw[blue!60, line width=1.6pt] (0,0)--(0,3)--(1,3)--(1,4)--(4,4)--(4,7)--(6,7)--(6,8)--(8,8);
		\node at (0.5,0.5) {$1$};	
		\node at (1.5,1.5) {$2$};
		\node at (2.5,2.5) {$3$};
		\node at (3.5,3.5) {$4$};
		\node at (4.5,4.5) {$5$};
		\node at (5.5,5.5) {$6$};
		\node at (6.5,6.5) {$7$};
		\node at (7.5,7.5) {$8$};
		
	\end{tikzpicture}
	\caption{The Dyck graph $G=([8],\{(1,2),(1,3),(2,3),(2,4),(3,4),(5,6),(5,7),(6,7),(7,8)\})$.} \label{fig:Dyck_graph}
\end{figure*}
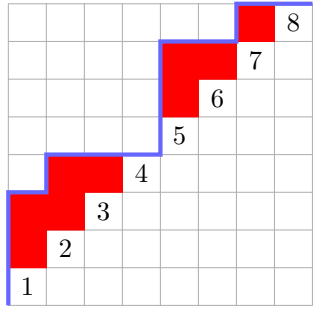

\begin{remark} \label{rem:interval_clique}
If $(G=([n],E))\in \mathcal{IG}_n$ is an interval graph, then $G$ has the following property: for every $j\in [n]$, the set of $i\in [j-1]$ such that $(i,j)\in E$ form a \emph{clique} in $G$, namely all these vertices are connected with each other in $G$. Indeed, if $i_1<i_2<\cdots <i_r<j$ are the aforementioned vertices, then for every $k\in [r-1]$, $(i_k,j)\in E$ implies $(i_k,i_t)$ for every $k<t\leq r$.

In fact, this property is saying that in our interval graphs the natural order on the set of vertices $[n]$ is a \emph{perfect elimination order}, showing that our interval graphs are \emph{chordal} (see \cite{Fulkerson_Gross}).
\end{remark}

\smallskip

It turns out that the interval graphs are the incomparability graphs of certain posets called \emph{interval orders} (hence their name).

Given a (naturally labelled) poset $P=([n],<_P)$, its \emph{incomparability (labelled) graph} $\mathrm{Inc(P)}=([n],E_P)$ is defined by setting $\{i,j\}\in E_P$ if and only if $i$ and $j$ are incomparable in $P$.

Let $\mathcal{I}$ be the set of all bounded closed intervals of $\mathbb{R}$, and given $I=[a,b]$ and $J=[c,d]$ we set $I \prec J$ if and only if $b < c$. Clearly $(\mathcal{I}, \prec)$ is a poset. Any subposet of $(\mathcal{I}, \prec)$ is called an \emph{interval order}. An example of interval order is given in Figure~\ref{fig:poset_intervals}: on the right there are the intervals, while on the left there is the corresponding Hasse diagram (in which node $i$ corresponds to interval $I_i$ for every $i$).

The following propositions are probably well known. We sketch their proofs for completeness.

\begin{proposition} \label{prop:interval_poset_graphs}
	For every $\mathcal{H} \subseteq \mathcal{I}$ such that $|\mathcal{H}|=n \in \mathbb{N}$, there exists an order-preserving bijection $\phi:(\mathcal{H}, \prec) \longrightarrow ([n],<)$ such that $\mathrm{Inc}(\phi(\mathcal{H}, \prec)) \in \mathcal{IG}_n$.
\end{proposition}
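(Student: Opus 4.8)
The plan is to order the intervals in $\mathcal{H}$ by their left endpoints and then verify that the induced labelling makes the incomparability graph an interval graph in the sense of this article. First I would write $\mathcal{H}=\{I_1,\dots,I_n\}$ with $I_k=[a_k,b_k]$, labelled so that $a_1\le a_2\le\cdots\le a_n$; by perturbing the intervals slightly (which changes neither the poset $(\mathcal{H},\prec)$ nor its incomparability graph, since $\prec$ is determined by strict inequalities $b_i<a_j$) we may even assume the left endpoints are pairwise distinct, so $a_1<a_2<\cdots<a_n$. Define $\phi(I_k):=k$. One checks $\phi$ is order-preserving: if $I_i\prec I_j$ then $b_i<a_j$, and since $a_i\le b_i$ we get $a_i<a_j$, hence $i<j$; conversely, a bijection of finite posets that is order-preserving in one direction need not be an isomorphism, but here we only need $\phi$ to be order-preserving as a map of posets onto the chain $([n],<)$ viewed merely as a set with its total order — that is, we only need the vertex set relabelled, which $\phi$ does tautologically. (If the statement intends $\phi$ to be an isomorphism onto the chain, that would be false in general; so I read ``order-preserving bijection $(\mathcal{H},\prec)\to([n],<)$'' as: $\phi$ is a bijection to $[n]$ which respects the order in the sense that $I_i\prec I_j\Rightarrow \phi(I_i)<\phi(I_j)$, i.e. a linear extension.)

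The substantive step is to show $\mathrm{Inc}(\phi(\mathcal{H},\prec))=([n],E_P)\in\mathcal{IG}_n$, that is: if $\{i,j\}\in E_P$ with $i<j$, then $\{i,k\}\in E_P$ for every $i<k\le j$. Unwinding definitions, $\{i,j\}\in E_P$ means $I_i$ and $I_j$ are $\prec$-incomparable, i.e. neither $b_i<a_j$ nor $b_j<a_i$ holds; since $i<j$ gives $a_i\le a_j$, the inequality $b_j<a_i\le a_j\le b_j$ is absurd, so incomparability of $I_i,I_j$ is equivalent to $a_j\le b_i$ (the two closed intervals overlap, with $I_i$ starting no later than $I_j$). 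Now take any $k$ with $i<k\le j$. Then $a_i\le a_k\le a_j\le b_i$, so $a_k\le b_i$, which by the same reasoning (using $i<k$, hence $a_i\le a_k$) says exactly that $I_i$ and $I_k$ are $\prec$-incomparable, i.e. $\{i,k\}\in E_P$. This is precisely the defining interval-graph condition, so $\mathrm{Inc}(\phi(\mathcal{H},\prec))\in\mathcal{IG}_n$.

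The only delicate point — and the one I expect to need a sentence of care — is the reduction to distinct left endpoints (or, equivalently, a consistent tie-breaking rule): when several intervals share a left endpoint, one must choose $\phi$ so that the chosen order is still a linear extension of $\prec$ and so that the ``overlap $\iff a_j\le b_i$'' characterization survives with $\le$ rather than $<$. Ordering first by left endpoint and breaking ties arbitrarily works, because if $a_i=a_j$ then $I_i,I_j$ are automatically incomparable (each interval contains its own left endpoint, so $a_i=a_j\in I_i\cap I_j$), so ties never create a $\prec$-relation that the labelling could violate, and the computation above only ever used $a_i\le a_j$ for $i<j$, which the tie-breaking preserves. With that observed, the argument above goes through verbatim, and everything else is the routine unwinding of definitions sketched above.
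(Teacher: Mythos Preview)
Your proof is correct and follows essentially the same approach as the paper's: both order the intervals by their left endpoints and then verify that overlap with $I_i$ propagates to all intermediate $I_k$. You have simply filled in the details the paper's sketch omits---in particular the careful treatment of ties among left endpoints and the explicit verification that ``incomparable and $i<j$'' is equivalent to $a_j\le b_i$---so this is a faithful expansion of the intended argument.
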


\begin{proof}[Sketch of proof]
We simply order increasingly $H_1,H_2,\dots$ the elements of $\mathcal{H}$ according to their leftmost points: it is easy to see that if $i<j$ and $H_i$ and $H_j$ are incomparable, i.e.\ they overlap, then every $H_k$ with $i<k\leq j$ must overlap with $H_i$ as well. For example, the intervals on the right of Figure~\ref{fig:poset_intervals} are ordered as we just explained, and they correspond to the interval graph in Figure~\ref{fig:interval_graph}.
\end{proof}

On the contrary, every graph in $\mathcal{IG}_n$ is the incomparability graph of a finite subposet of $\mathcal{I}$.

\begin{proposition} \label{prop:Interval_graph_to_intervals}
	For every $G=([n], E) \in \mathcal{IG}_n$ there exist a finite subset $\mathcal{H} \subseteq \mathcal{I}$ and an order-preserving bijection $\phi: (\mathcal{H},\prec) \longrightarrow ([n],<)$ such that $\mathrm{Inc}(\phi(\mathcal{H}, \prec))=G$.
\end{proposition}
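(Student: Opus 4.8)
The plan is to realise $G$ by an explicit, concrete choice of intervals. The one structural fact I would isolate first is the following immediate consequence of the definition of an interval graph: for each vertex $i \in [n]$, the set of neighbours of $i$ that are larger than $i$, namely $\{\, j > i : \{i,j\} \in E \,\}$, is an interval of integers $\{i+1, i+2, \dots, r(i)\}$ for a suitable $r(i) \ge i$ (with the convention $r(i) = i$ when $i$ has no larger neighbour). Indeed, if $\{i,j\} \in E$ with $j > i$, then $\{i,k\} \in E$ for every $i < k \le j$ by the defining property of $G$, so the larger neighbours of $i$ form an initial segment of $\{i+1, \dots, n\}$, and we let $r(i)$ be its maximum.

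Next I would set $I_i := \bigl[\, i,\ r(i) + \tfrac12 \,\bigr] \in \mathcal{I}$ for each $i \in [n]$, let $\mathcal{H} := \{I_1, \dots, I_n\}$, and define $\phi \colon \mathcal{H} \to [n]$ by $\phi(I_i) := i$. Since the left endpoints $1, 2, \dots, n$ are pairwise distinct, the intervals $I_i$ are pairwise distinct, so $|\mathcal{H}| = n$ and $\phi$ is a bijection. It is order-preserving: if $I_i \prec I_j$ then $j$ (the left endpoint of $I_j$) is strictly larger than $r(i) + \tfrac12$ (the right endpoint of $I_i$), which is itself $> i$, hence $i < j$.

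It then remains to verify that $\mathrm{Inc}(\phi(\mathcal{H},\prec)) = G$; this is where I would do the (short) bookkeeping. Fix $i < j$. The right endpoints of the $I_k$ lie in $\mathbb{Z} + \tfrac12$ and the left endpoints in $\mathbb{Z}$, so no right endpoint coincides with a left endpoint; consequently $I_i$ and $I_j$ are $\prec$-comparable if and only if $I_i \prec I_j$, i.e.\ if and only if $r(i) + \tfrac12 < j$, i.e.\ if and only if $r(i) < j$, i.e.\ if and only if $j \notin \{i+1, \dots, r(i)\}$, i.e.\ if and only if $\{i,j\} \notin E$. Hence $I_i$ and $I_j$ are incomparable exactly when $\{i,j\} \in E$, which is precisely the assertion that the incomparability graph of the transported poset $\phi(\mathcal{H},\prec)$ equals $G$.

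I do not expect a genuine obstacle here: the construction is completely explicit and every verification is elementary. The only subtlety worth flagging is the choice of endpoints — shifting the right endpoint to $r(i) + \tfrac12$ rather than to $r(i)$ — which prevents two of the closed intervals from merely touching at a common endpoint (which would count as an overlap, hence as incomparability, for the wrong reason) and reduces the comparability condition to a transparent inequality between an integer and a half-integer.
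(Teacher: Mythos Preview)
Your proof is correct and essentially identical to the paper's. The paper phrases the right endpoint via $k_j := \min\{k : j <_P k\}$ (the least non-neighbour above $j$) and sets $I_j = [j,\, k_j - \tfrac12]$, which is exactly your $[j,\, r(j)+\tfrac12]$ since $k_j = r(j)+1$; the only cosmetic difference is that the paper uses $[j,\, n+1]$ rather than $[j,\, n+\tfrac12]$ in the boundary case $r(j)=n$. One minor remark: your sentence ``consequently $I_i$ and $I_j$ are $\prec$-comparable if and only if $I_i \prec I_j$'' really follows from the left endpoints satisfying $i<j$ (so $I_j \prec I_i$ is impossible), not from the half-integer observation alone; you might make that explicit.
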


\begin{proof}[Sketch of proof]
	Given $G=([n], E) \in \mathcal{IG}_n$, let $P=([n], <_P)$ be the poset on $[n]$ such that $\mathrm{Inc}(P)=G$: more precisely $i<_P j$ if and only if $i<j$ and $\{i,j\}\notin E$. It is easy to check that this gives indeed a poset: for example, the poset on the left of Figure~\ref{fig:poset_intervals} corresponds to the interval graph in Figure~\ref{fig:interval_graph}. 
	
	For each $j \in [n]$, consider 
	\[M_j:=\lbrace k \in [n] \hspace{1mm} | \hspace{1mm} j <_P k \rbrace. \]
	Now we take $\mathcal{H}= \lbrace I_1, I_2, \ldots, I_n \rbrace \subseteq \mathcal{I}$, where for each $j \in [n]$ we define $I_j$ in the following way:
	\[
	I_j=
	\begin{cases}
		[j,n+1], & \text{ if } M_j= \emptyset \\
		[j, k_j - \frac{1}{2}], & \text{ otherwise, where } k_j=\min M_j
	\end{cases}.
	\]
	It is easy to see that $\phi: \mathcal{H} \longrightarrow [n]$ such that $\phi(I_j)=j$ for each $j \in [n]$ gives the desired bijection. For example, the intervals on the right of Figure~\ref{fig:poset_intervals} correspond to the interval graph in Figure~\ref{fig:interval_graph} via $\phi$.
\end{proof}

\begin{figure}[ht]
	\centering
	\begin{tikzpicture}[auto,node distance=3cm,
		thick,root node/.style={shape=circle, inner sep=3pt, fill=red,draw},main node/.style={shape=circle, inner sep=3pt, fill=black,draw},external node/.style={shape=circle, inner sep=6pt,draw},scale=.35]
		
		\node[external node] (1) at (-3,0) {};
		\node[external node] (2) at (3,0) {};
		\node[external node] (3) at (0,0) {};
		\node[external node] (4) at (-3,3) {};
		\node[external node] (5) at (-6,6) {};
		\node[external node] (6) at (-3,6) {};
		\node[external node] (7) at (0,6) {};
		\node[external node] (8) at (0,11) {};
		
		\node at (0,0) {$3$};
		\node at (-3,0) {$1$};
		\node at (3,0) {$2$};
		\node at (-3,3) {$4$};
		\node at (-6,6) {$5$};
		\node at (-3,6) {$6$};
		\node at (0,6) {$7$};
		\node at (0,11) {$8$};
		
		\draw[gray!90, line width=0.32mm] (6,0)--(22,0);
		\draw[gray!90, line width=0.32mm] (6,-0.3)--(6,0.3);
		\draw[gray!90, line width=0.32mm] (8,-0.3)--(8,0.3);
		\draw[gray!90, line width=0.32mm] (10,-0.3)--(10,0.3);
		\draw[gray!90, line width=0.32mm] (12,-0.3)--(12,0.3);
		\draw[gray!90, line width=0.32mm] (14,-0.3)--(14,0.3);
		\draw[gray!90, line width=0.32mm] (16,-0.3)--(16,0.3);
		\draw[gray!90, line width=0.32mm] (18,-0.3)--(18,0.3);
		\draw[gray!90, line width=0.32mm] (20,-0.3)--(20,0.3);
		\draw[gray!90, line width=0.32mm] (22,-0.3)--(22,0.3);
		
		\draw[black, line width=0.4mm] (20,1)--(22,1);
		\draw[black, line width=0.4mm] (20,1-0.3)--(20,1+0.3);
		\draw[black, line width=0.4mm] (22,1-0.3)--(22,1+0.3);
		\node at (21,1.5) {$I_8$};
		
		\draw[black, line width=0.4mm] (18,1)--(19,1);
		\draw[black, line width=0.4mm] (18,1-0.3)--(18,1+0.3);
		\draw[black, line width=0.4mm] (19,1-0.3)--(19,1+0.3);
		\node at (18.5,1.5) {$I_7$};
		
		\draw[black, line width=0.4mm] (16,2.5)--(22,2.5);
		\draw[black, line width=0.4mm] (16,2.5-0.3)--(16,2.5+0.3);
		\draw[black, line width=0.4mm] (22,2.5-0.3)--(22,2.5+0.3);
		\node at (19,3) {$I_6$};
		
		\draw[black, line width=0.4mm] (14,4)--(19,4);
		\draw[black, line width=0.4mm] (14,4-0.3)--(14,4+0.3);
		\draw[black, line width=0.4mm] (19,4-0.3)--(19,4+0.3);
		\node at (16.5,4.5) {$I_5$};
		
		\draw[black, line width=0.4mm] (12,4)--(13,4);
		\draw[black, line width=0.4mm] (12,4-0.3)--(12,4+0.3);
		\draw[black, line width=0.4mm] (13,4-0.3)--(13,4+0.3);
		\node at (12.5,4.5) {$I_4$};
		
		\draw[black, line width=0.4mm] (10,5.5)--(17,5.5);
		\draw[black, line width=0.4mm] (10,5.5-0.3)--(10,5.5+0.3);
		\draw[black, line width=0.4mm] (17,5.5-0.3)--(17,5.5+0.3);
		\node at (13.5,6) {$I_3$};
		
		\draw[black, line width=0.4mm] (8,7)--(19,7);
		\draw[black, line width=0.4mm] (8,7-0.3)--(8,7+0.3);
		\draw[black, line width=0.4mm] (19,7-0.3)--(19,7+0.3);
		\node at (13.5,7.5) {$I_2$};
		
		\draw[black, line width=0.4mm] (6,8.5)--(11,8.5);
		\draw[black, line width=0.4mm] (6,8.5-0.3)--(6,8.5+0.3);
		\draw[black, line width=0.4mm] (11,8.5-0.3)--(11,8.5+0.3);
		\node at (8.5,9) {$I_1$};
		
		\node[gray!90] at (6,0-0.8) {$1$};
		\node[gray!90] at (8,0-0.8) {$2$};
		\node[gray!90] at (10,0-0.8) {$3$};
		\node[gray!90] at (12,0-0.8) {$4$};
		\node[gray!90] at (14,0-0.8) {$5$};
		\node[gray!90] at (16,0-0.8) {$6$};
		\node[gray!90] at (18,0-0.8) {$7$};
		\node[gray!90] at (20,0-0.8) {$8$};
		\node[gray!90] at (22,0-0.8) {$9$};
		
		\path
		(1) edge node {} (4)
		(4) edge node {} (5)
		(4) edge node {} (6)
		(4) edge node {} (7)
		(5) edge node {} (8)
		(7) edge node {} (8)
		(3) edge node {} (7)
		(2) edge node {} (8);
	\end{tikzpicture}
	\caption{On the left the poset whose incomparability graph is the $G$ in Figure~\ref{fig:interval_graph}. On the right the family of intervals associated to the $G$ in Figure~\ref{fig:interval_graph} by the proof of Proposition~\ref{prop:Interval_graph_to_intervals}.}
	\label{fig:poset_intervals}
\end{figure}
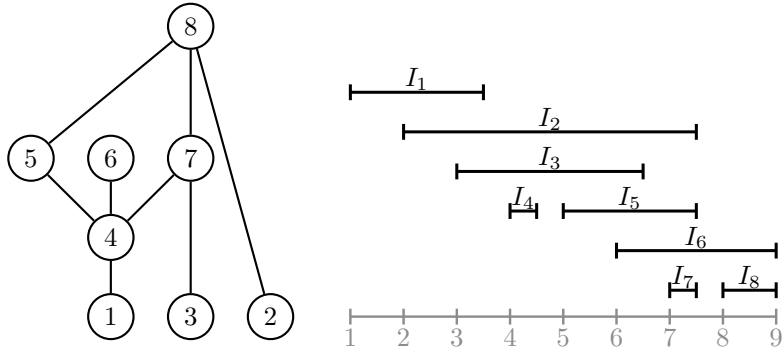

It is known (see \cite{Shareshian_Wachs_Advances}*{Section~4}) that the Dyck graphs are the incomparability graphs of (finite) unit interval orders, i.e.\ interval orders where every interval has the same fixed length (without loss of generality equal to $1$). 
\begin{remark}
In one direction, notice that in the proof of Proposition~\ref{prop:interval_poset_graphs} that we sketched, we could have ordered the intervals from right to left, i.e.\ according to their right endpoint: of course in general we would find again an interval graph, but in the special case where all the intervals have the same length, this order would correspond precisely to the order of the flipped graph, showing that also its flipped is an interval graph, hence showing that it is actually a Dyck graph. 
\end{remark}

To conclude this section, we recall that the interval orders are precisely the $(\mathbf{2}+\mathbf{2})$-free posets (see \cite{Fishburn}), and that the unit interval orders are precisely the $(\mathbf{3}+\mathbf{1})$-free and $(\mathbf{2}+\mathbf{2})$-free posets (see \cite{Scott_Suppes}). Finally, we observe here that our interval graphs are precisely the ``dually factorial posets'' appearing in \cite{Claesson_Linusson}.

\section{Symmetric and quasisymmetric functions} \label{sec:qsym}

In this section we recall a few basic facts of symmetric and quasisymmetric functions, mainly to fix the notation. As general references we suggest \cite{Luoto_Mykytiuk_Willigenburg_Book} for quasisymmetric functions, \cite{Stanley-Book-1999}*{Chapter~7} for both symmetric and quasisymmetric functions, and \cite{Stanley_Book_2012} for posets. 

Given a composition $\alpha=(\alpha_1,\alpha_2,\dots,\alpha_k)$ of $n\in \mathbb{N}$ (denoted $\alpha\vDash n$), we denote its \emph{size} by $|\alpha|=\sum_i\alpha_i=n$ and its \emph{length} by $\ell(\alpha)=k$. For brevity, sometimes we will use the \emph{exponential notation}, so that for example we will write $(1^4)$ for $(1,1,1,1)$, or $(1^3,2^2,1,3)$ for $(1,1,1,2,2,1,3)$.

To a composition $\alpha\vDash n$ we associate a set $\mathsf{set}(\alpha)=\mathsf{set}_n(\alpha)\subseteq [n-1]$ as follows:
\[\mathsf{set}(\alpha)=\{\alpha_1,\alpha_1+\alpha_2,\dots,\alpha_1+\cdots +\alpha_{k-1}\}.\]
Viceversa, to a subset $S\subseteq [n-1]$ whose elements are $i_1<i_2<\cdots <i_k$ we associate the composition \[\mathsf{comp}(S)=\mathsf{comp}_n(S)=(i_1,i_2-i_1,i_3-i_2,\dots,i_k-i_{k-1},n-i_k)\vDash n.\]
Notice that the functions $\mathsf{set}_n$ and $\mathsf{comp}_n$ are inverse of each others.

To a composition $\alpha\vDash n$, $\alpha=(\alpha_1,\alpha_2,\dots,\alpha_k)$ we associate its \emph{reversal} $\alpha^r=(\alpha_k,\alpha_{k-1},\dots,\alpha_1)$, its \emph{complement} $\alpha^c=\mathsf{comp}([n-1]\setminus \mathsf{set}(\alpha))$, and its \emph{transpose} $\alpha^t=(\alpha^r)^c=(\alpha^c)^r$. Notice that the reversal can also be described as $\alpha^r=\mathsf{comp}(n-\mathsf{set}(\alpha))$, where for every $S\subseteq [n-1]$, we denote $n-S:=\{n-i\mid i\in S\}\subseteq [n-1]$.

For example if $\alpha=(1,4,1,2)\vDash 8$, then $\alpha^r=(2,1,4,1)$, $\alpha^c=(2,1,1,3,1)$ and $\alpha^t=(1,3,1,1,2)$.

Given two compositions, we say that $\alpha$ is a \emph{refinement} of $\beta$ (equivalently $\beta$ is a \emph{coarsening} of $\alpha$), written $\alpha\preceq \beta$ (equivalently $\beta \succeq \alpha$), if we can obtain the parts of $\beta$ in order by adding adjacent parts of $\alpha$. For example $(1,\textcolor{red}{3},\textcolor{red}{1},1,\textcolor{blue}{2},\textcolor{blue}{1})\preceq (1,\textcolor{red}{4},1,\textcolor{blue}{3})$.

Observe that $\alpha\preceq \beta$ if and only if $\mathsf{set}(\beta)\subseteq \mathsf{set}(\alpha)$. Hence this partial order on compositions is a boolean lattice. So we can talk about the meet $\alpha\wedge\beta$: this is the coarsest common refinement of both $\alpha$ and $\beta$.  For example if $\alpha=(2,2,1)$ and $\beta=(3,2)$, then $\alpha\wedge \beta=(2,1,1,1)$. Notice that $\mathsf{set}(\alpha\wedge\beta)=\mathsf{set}(\alpha)\cup \mathsf{set}(\beta)$.

Given two compositions $\alpha=(\alpha_1,\alpha_2,\dots,\alpha_r)$ and $\beta=(\beta_1,\beta_2,\dots,\beta_s)$, we define the \emph{concatenation product}
\[\alpha\beta:=(\alpha_1,\alpha_2,\dots,\alpha_r,\beta_1,\beta_2,\dots,\beta_s).\]

Given two compositions $\alpha,\beta\vDash n$, let \[\gamma(\alpha,\beta):=(\gamma^1(\alpha,\beta),\gamma^2(\alpha,\beta),\dots,\gamma^{\ell(\beta)}(\alpha,\beta))\] 
be such that for every $i\in [\ell(\beta)]$
\[\gamma^i(\alpha,\beta)\vDash \beta_i \] 
and
\[ \gamma^1(\alpha,\beta)\gamma^2(\alpha,\beta)\cdots \gamma^{\ell(\beta)}(\alpha,\beta)=\alpha\wedge \beta.\]

For example, if again $\alpha=(2,2,1)$ and $\beta=(3,2)$, then $\alpha\wedge \beta=(2,1,1,1)$, $\gamma(\alpha,\beta)=(\gamma^1(\alpha,\beta),\gamma^2(\alpha,\beta))$ with $\gamma^1(\alpha,\beta)=(2,1)\vDash 3=\beta_1$ and $\gamma^2(\alpha,\beta)=(1,1)\vDash 2=\beta_2$.

Given a composition $\alpha\vDash n$, set
\[  \eta(\alpha):=\sum_{i=1}^{\ell(\alpha)-1}\sum_{j=1}^i\alpha_j\]
and
\[\eta(\gamma(\alpha,\beta)):=\sum_{i=1}^{\ell(\beta)}\eta(\gamma^i(\alpha,\beta)).\]

For example, if again $\alpha=(2,2,1)$ and $\beta=(3,2)$, then $\eta(\alpha)=2+(2+2)=6$ and $\eta(\gamma(\alpha,\beta))=\eta(\gamma^1(\alpha,\beta))+\eta(\gamma^2(\alpha,\beta))=2+1=3$. Observe that $\eta((n))=0$.

\smallskip

We denote by $\mathrm{QSym}$ the algebra of quasisymmetric functions in the variables $x_1,x_2,\dots$ and coefficients in $\mathbb{Q}(q)$, where $q$ is a variable. Given a composition $\alpha=(\alpha_1,\alpha_2,\dots,\alpha_k)$, we define the \emph{monomial quasisymmetric function} $M_\alpha$ as
\[M_{\alpha}=\sum_{i_1<i_2<\cdots <i_k}x_{i_1}^{\alpha_1}x_{i_2}^{\alpha_2}\cdots x_{i_k}^{\alpha_k}.\]

The set $\{M_\alpha\mid \alpha\text{ composition}\}$ is clearly a basis of $\mathrm{QSym}$ (this can also be taken as the definition of $\mathrm{QSym}$).

Given $n\in \mathbb{N}$ and $S\subseteq [n-1]$, we define the \emph{fundamental (Gessel) quasisymmetric function} $L_{n,S}$ as
\[L_{n,S}:=\mathop{\sum_{i_1\leq i_2\leq \cdots \leq i_n}}_{j\in S\Rightarrow i_j\neq i_{j+1}}x_{i_1}x_{i_2}\cdots x_{i_n}\] 
and for every $\alpha\vDash n$, we define $L_\alpha:=L_{n,\mathsf{set}(\alpha)}$. 

It is well known that 
$\{L_{\alpha}\mid \alpha\text{ composition}\}$ is a basis of $\mathrm{QSym}$.

We have the following three involutions of $\mathrm{QSym}$: $\psi: \mathrm{QSym}\to \mathrm{QSym}$, defined by $\psi(L_{\alpha}):=L_{\alpha^c}$, $\rho: \mathrm{QSym}\to \mathrm{QSym}$ defined by $\rho(L_{\alpha})=L_{\alpha^r}$, and $\omega: \mathrm{QSym}\to \mathrm{QSym}$ defined by $\omega(L_{\alpha})=L_{\alpha^t}$. Observe that these isomorphisms commute, and $\omega=\psi\circ \rho=\rho\circ \psi$. Moreover, it is easy to check that $\rho(M_\alpha)=M_{\alpha^r}$.
\begin{remark} \label{rmk:psi_restrict_to_omega}
Notice that these involutions restrict to the subalgebra of symmetric functions $\mathrm{Sym}\subset \mathrm{QSym}$, where $\rho$ restricts to the identity, while both $\psi$ and $\omega$ restrict to the involution $\omega$ in $\mathrm{Sym}$ such that $\omega(h_i)=e_i$ for every $i\geq 1$, where as usual $e_i$ and $h_i$ are the \emph{elementary} and the \emph{complete homogeneous symmetric functions} respectively (hence $h_n=L_{(n)}$ and $e_n=L_{(1^n)}$).
\end{remark}

\smallskip 

We will need to work with \emph{plethysm} of quasisymmetric functions. For a nice presentation of this fundamental tool we refer to \cite{Loehr-Remmel-plethystic-2011}. Here we limit ourselves to introduce the few things that are needed later in the text.

First of all, some notation: given a quasisymmetric function $f\in \mathrm{QSym}$, we denote its dependence on the \emph{alphabet} of variables $x_1,x_2,\dots$ by writing $f[X]$; it is useful to thing of $X$ as the series $x_1+x_2+\cdots$. Now given $f\in \mathrm{QSym}$, we want to define a quasisymmetric function in the alphabet of variables $\{x_iy_j\}_{i,j\geq 1}$, where $y_1,y_2,\dots$ is another alphabet of variables. 

If $f$ is symmetric, then there is an obvious way to do it: we consider simply $f(x_1y_1,x_2y_1,\dots,x_iy_j,\dots)$. Since $f$ is symmetric, it is not important in which order the $x_iy_j$ occur in this expression.

This leads to the plethystic notation $f[XY]$, where again it is useful to thing of $Y$ as the series $y_1+y_2+\cdots$, so that $XY$ is also the formal sum of the elements of the alphabet $\{x_iy_j\}_{i,j\geq 1}$. 

But if $f\in \mathrm{QSym}$ is not symmetric, then the order in which the $x_iy_j$ occur is relevant. Hence we have to make a choice. So for any composition $\alpha$ we define the plethysm $L_\alpha\left[XY\right]$ where the alphabet $XY=\sum_{i,j\geq 1}x_iy_j$ is considered in lexicographic order, i.e.
\[ x_1y_1<x_1y_2<x_1y_3<\cdots < x_2y_1<x_2y_2<x_2y_3<\cdots, \]
by setting
\[ L_\alpha[XY]:=\mathop{\sum_{a_1\leq a_2\leq \cdots \leq a_n}}_{a_j=a_{j+1}\Rightarrow j\notin \mathsf{set}(\alpha)}a_1a_2\cdots a_n \]
where each $a_i$ is an element of our alphabet $XY\equiv \{x_iy_j\}_{i,j\geq 1}$ and the inequalities $a_i\leq a_{i+1}$ are determined by the total order that we fixed on the alphabet.

Of course for any $f\in \mathrm{QSym}$ we define $f[XY]$ extending the above definition by linearity (since the $L_\alpha$ form a basis of $\mathrm{QSym}$).

In \cite{Loehr-Remmel-plethystic-2011} the authors define more generally the plethysm of a fundamental quasisymmetric function in a ``combinatorial alphabet'', which is in particular totally ordered, and the plethysm depends on the order. It turns out that our definition coincides with their definition in the present case. In particular, when applied to a symmetric function $f$ we recover the usual plethysm of symmetric functions (we do not recall the definition here, but it appears in \cite{Loehr-Remmel-plethystic-2011}).

\smallskip 

We observe that with the given order, the resulting series $L_\alpha[XY]$ is quasisymmetric separately in the alphabet $X$ and in the alphabet $Y$. The following proposition follows directly from the above definitions and notations.
\begin{proposition} \label{prop:L_Cauchy}
	Given a composition $\alpha\vDash n$, we have
	\begin{equation} \label{eq:L_Cauchy}
	L_\alpha[XY]=\sum_{\beta\vDash n}\prod_{i=1}^{\ell(\beta)}L_{\gamma^i(\alpha,\beta)}[Y] M_\beta[X]. 
	\end{equation}
\end{proposition}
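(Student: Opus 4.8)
The plan is to unwind both sides of \eqref{eq:L_Cauchy} directly from the definition of the plethysm $L_\alpha[XY]$ given just above, so the entire argument is essentially a bookkeeping of the total order on the alphabet $XY=\{x_iy_j\}$ and its interaction with the descent set $\mathsf{set}(\alpha)$. First I would recall that a monomial in $L_\alpha[XY]$ is indexed by a weakly increasing sequence $a_1\le a_2\le\cdots\le a_n$ of letters of $XY$ (with respect to lexicographic order), subject to the condition that whenever $a_j=a_{j+1}$ we must have $j\notin\mathsf{set}(\alpha)$. Writing $a_i=x_{p_i}y_{q_i}$, the lexicographic order says $a_i\le a_{i+1}$ iff either $p_i<p_{i+1}$, or $p_i=p_{i+1}$ and $q_i\le q_{i+1}$. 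Thus such a sequence decomposes into consecutive \emph{blocks} according to the value of the $X$-index: the $X$-indices $p_1\le p_2\le\cdots\le p_n$ are weakly increasing, and within each maximal block where $p$ is constant the $Y$-indices are weakly increasing. If the distinct values taken by $p$ are $j_1<j_2<\cdots<j_{\ell}$ with multiplicities $\beta_1,\beta_2,\dots,\beta_\ell$, then $\beta=(\beta_1,\dots,\beta_\ell)$ is a composition of $n$, and the $X$-part of the monomial is exactly $x_{j_1}^{\beta_1}\cdots x_{j_\ell}^{\beta_\ell}$; summing over all choices $j_1<\cdots<j_\ell$ of the $X$-indices produces the factor $M_\beta[X]$.

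**Matching the $Y$-part block by block.** The core step is to check that, for a fixed block structure $\beta$, the $Y$-indices contribute precisely $\prod_{i=1}^{\ell(\beta)}L_{\gamma^i(\alpha,\beta)}[Y]$. Within the $i$-th block (positions $\beta_1+\cdots+\beta_{i-1}+1$ through $\beta_1+\cdots+\beta_i$), the $Y$-indices $q$ form a weakly increasing sequence of length $\beta_i$; moreover two consecutive positions $j,j+1$ \emph{inside} the same block have equal letters $a_j=a_{j+1}$ exactly when their $Y$-indices are also equal, and the constraint forbidding this is precisely ``$j\notin\mathsf{set}(\alpha)$''. Meanwhile, a position $j$ that sits at a block boundary automatically has $a_j<a_{j+1}$ (different $X$-index), so no constraint is active there regardless of whether $j\in\mathsf{set}(\alpha)$. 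Therefore the $Y$-indices in block $i$ range over all weakly increasing sequences of length $\beta_i$ whose allowed equalities at internal positions are governed by the restriction of $\mathsf{set}(\alpha)$ to the interior of that block — which is exactly the definition of the fundamental quasisymmetric function indexed by the composition obtained by cutting $\alpha\wedge\beta$ at the block boundaries, namely $\gamma^i(\alpha,\beta)$. Here I would use that $\mathsf{set}(\alpha\wedge\beta)=\mathsf{set}(\alpha)\cup\mathsf{set}(\beta)$: the positions $\mathsf{set}(\beta)$ are exactly the block boundaries (always strict), and within block $i$ the remaining elements of $\mathsf{set}(\alpha)$ translate, after shifting indices back to start at $0$, to $\mathsf{set}(\gamma^i(\alpha,\beta))$ by the defining property $\gamma^1(\alpha,\beta)\cdots\gamma^{\ell(\beta)}(\alpha,\beta)=\alpha\wedge\beta$.

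**Assembling the identity.** Putting these two observations together: summing a monomial of $L_\alpha[XY]$ first over all valid $Y$-index sequences with the block structure $\beta$ fixed gives $\prod_{i=1}^{\ell(\beta)}L_{\gamma^i(\alpha,\beta)}[Y]$ times the $X$-monomial determined by the chosen $X$-indices $j_1<\cdots<j_\ell$, and then summing over all such choices of $X$-indices gives the extra factor $M_\beta[X]$; finally summing over all compositions $\beta\vDash n$ (which exhaust all possible block structures) yields the right-hand side of \eqref{eq:L_Cauchy}. Since every weakly increasing sequence of letters of $XY$ arises from exactly one block structure $\beta$ and one compatible choice of $X$- and $Y$-indices, this is a bijective decomposition of the monomials of $L_\alpha[XY]$, so the two sides agree term by term.

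**Main obstacle.** The only genuinely delicate point is the behavior of the descent constraint at block boundaries versus block interiors — making sure one correctly sees that a boundary position is ``free'' (its $j$ may or may not lie in $\mathsf{set}(\alpha)$, it does not matter) while an interior position of block $i$ is constrained exactly by membership in $\mathsf{set}(\gamma^i(\alpha,\beta))$. This is precisely what the auxiliary data $\gamma(\alpha,\beta)$ and the identity $\mathsf{set}(\alpha\wedge\beta)=\mathsf{set}(\alpha)\cup\mathsf{set}(\beta)$ were set up to encode, so once the index-shifting is written out carefully the verification is routine; I would present it as a short direct computation rather than a formal induction.
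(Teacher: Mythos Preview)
Your proof is correct and is exactly the direct unwinding of the definitions that the paper has in mind; indeed the paper does not give a proof at all, stating only that the proposition ``follows directly from the above definitions and notations.'' Your careful block decomposition by $X$-index, together with the observation that block boundaries are automatically strict while interior constraints translate (via $\mathsf{set}(\alpha\wedge\beta)=\mathsf{set}(\alpha)\cup\mathsf{set}(\beta)$) to $\mathsf{set}(\gamma^i(\alpha,\beta))$, is precisely the intended verification.
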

\begin{remark}
	For example \[L_{(2,1)}[XY]=L_{(1)}^3[Y]M_{(1^3)}[X]+L_{(2)}[Y]L_{(1)}[Y]M_{(2,1)}[X]+L_{(1)}^3[Y]M_{(1,2)}[X]+L_{(2,1)}[Y]M_{(3)}[X]\]
	so the coefficient of $x_1^2x_2$ is $L_{(2)}[Y]L_{(1)}[Y]=h_{2}[Y]h_{1}[Y]$, which is symmetric in $Y$, while the coefficient of $y_1^2y_2$ is $3M_{(1^3)}[X]+2M_{(2,1)}[X]+3M_{(1,2)}[X]+M_{(3)}[X]$, which is not symmetric in $X$. This shows that we cannot switch the roles of $X$ and $Y$, hence our plethysm does indeed depend on the order that we choose on the alphabet.
	
	On the other hand, it is easy to check that in the special cases $\alpha=(n)$ and $\alpha=(1^n)$ we do recover the well-known Cauchy identities of $h_n[XY]$ and $e_n[XY]$ respectively. Hence our proposition can be seen as an extension of those Cauchy identities.
\end{remark}

To conclude this section, we compute a formula for the plethysm $L_\alpha\left[\frac{1}{1-q}\right]$.

\begin{lemma} \label{lem:L_ps}
	Given a composition $\alpha\vDash n$, consider the alphabet $\frac{1}{1-q}=1+q+q^2+\cdots$ (or better $\{q^i\mid i\in \mathbb{N}\}$) ordered naturally as $1=q^0<q^1<q^2<\cdots$. We have
	\begin{equation} \label{eq:L_plethysm}
	L_\alpha\left[\frac{1}{1-q}\right]=q^{\eta(\alpha^r)}h_{n}\left[\frac{1}{1-q}\right]. 
	\end{equation}
\end{lemma}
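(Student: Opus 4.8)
The plan is to unfold the definition of the plethysm directly and then produce a weight‑preserving bijection onto the sequences enumerated by $h_n\left[\frac{1}{1-q}\right]$.

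First I would make the left‑hand side explicit. Since the alphabet $\frac{1}{1-q}$ is $\{q^i\mid i\in\mathbb{N}\}$ with $q^0<q^1<q^2<\cdots$, writing $a_i=q^{c_i}$ a contributing monomial $a_1a_2\cdots a_n$ with $a_1\le\cdots\le a_n$ corresponds to a sequence of exponents $0\le c_1\le c_2\le\cdots\le c_n$ in $\mathbb{N}$ subject to the constraint ``$c_j=c_{j+1}\Rightarrow j\notin\mathsf{set}(\alpha)$'', i.e.\ $c_j<c_{j+1}$ whenever $j\in\mathsf{set}(\alpha)$, and the monomial equals $q^{c_1+\cdots+c_n}$. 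Hence $L_\alpha\left[\frac{1}{1-q}\right]=\sum q^{c_1+\cdots+c_n}$ over all such sequences. In particular, taking $\alpha=(n)$, so $\mathsf{set}(\alpha)=\emptyset$ and there is no strictness constraint, $h_n\left[\frac{1}{1-q}\right]=L_{(n)}\left[\frac{1}{1-q}\right]=\sum_{0\le b_1\le\cdots\le b_n}q^{b_1+\cdots+b_n}$.

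Next I would introduce the ``staircase shift''. For $i\in[n]$ set $d_i:=\#\{\,j\in\mathsf{set}(\alpha)\mid j<i\,\}$; then $d_1=0$ and $d_{i+1}-d_i\in\{0,1\}$, the value being $1$ exactly when $i\in\mathsf{set}(\alpha)$. I claim that $(c_1,\dots,c_n)\mapsto(c_1-d_1,\dots,c_n-d_n)$ is a \emph{bijection} from the sequences indexing $L_\alpha\left[\frac{1}{1-q}\right]$ onto the weakly increasing sequences $0\le b_1\le\cdots\le b_n$ of naturals indexing $h_n\left[\frac{1}{1-q}\right]$, with inverse $b_i\mapsto b_i+d_i$. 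The verification is elementary: $b_{i+1}-b_i=(c_{i+1}-c_i)-(d_{i+1}-d_i)\ge0$ because at the positions $i\in\mathsf{set}(\alpha)$ where we subtract $1$ we have $c_{i+1}-c_i\ge1$, and elsewhere $c_{i+1}-c_i\ge0$ and $d_{i+1}=d_i$; conversely, adding back the $d_i$ restores the strict increases exactly at $\mathsf{set}(\alpha)$, and $b_1\ge0$ iff $c_1\ge0$. Since $\sum_i c_i=\sum_i b_i+\sum_i d_i$ and $\sum_i d_i$ depends only on $\alpha$, the bijection yields at once $L_\alpha\left[\frac{1}{1-q}\right]=q^{\sum_{i=1}^n d_i}\,h_n\left[\frac{1}{1-q}\right]$.

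Finally I would identify the exponent with $\eta(\alpha^r)$. Interchanging the order of summation, $\sum_{i=1}^n d_i=\sum_{j\in\mathsf{set}(\alpha)}\#\{i\in[n]\mid i>j\}=\sum_{j\in\mathsf{set}(\alpha)}(n-j)$; writing $\mathsf{set}(\alpha)=\{\alpha_1+\cdots+\alpha_i\mid 1\le i\le\ell(\alpha)-1\}$ this becomes $\sum_{i=1}^{\ell(\alpha)-1}(\alpha_{i+1}+\cdots+\alpha_{\ell(\alpha)})$, which after reindexing $i\mapsto\ell(\alpha)-i$ is precisely $\sum_{i=1}^{\ell(\alpha)-1}\sum_{j=1}^{i}(\alpha^r)_j=\eta(\alpha^r)$. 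There is no deep obstacle here; the only points demanding real care are keeping the strict‑versus‑weak inequality bookkeeping straight in the bijection and getting the identity $\sum_{j\in\mathsf{set}(\alpha)}(n-j)=\eta(\alpha^r)$ right (and not, say, $\eta(\alpha)$), so I would sanity‑check it on $\alpha=(2,2,1)$, where both sides equal $4$.
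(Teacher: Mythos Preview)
Your proof is correct and is essentially the same argument as the paper's, which phrases the bijection pictorially in terms of lattice paths and areas (the heights of the east steps are your $c_i$, and ``removing the $\ell(\alpha)-1$ forced north steps'' is exactly your staircase shift $c_i\mapsto c_i-d_i$). Your version has the virtue of being more explicit about the bijection and about the identification $\sum_{j\in\mathsf{set}(\alpha)}(n-j)=\eta(\alpha^r)$, which the paper leaves to the reader.
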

\begin{proof}
It is well known (see e.g.\ \cite{Stanley-Book-1999}*{Corollary~7.21.3}) that 
\begin{equation} \label{eq:hn_ps} h_n\left[\frac{1}{1-q}\right]=h(1,q,q^2,\dots)=\prod_{i=1}^n\frac{1}{1-q^i}, 
\end{equation} 
which $q$-counts the area between the interval $[0,n]$ of the $x$ axis and the infinite lattice paths starting from $(0,0)$, consisting of exactly $n$ unit east steps while all the other ones are unit north steps (this is just another way to picture the partitions with parts of sizes at most $n$). 

Then, to understand our formula, we can interpret the monomials occurring on the left hand side as $q$-counting the same area of the same paths, but with the extra condition that for every $i\in\mathsf{set}(\alpha)$ the $i$-th unit east step is necessarily followed by a unit north step. Then the factor $\eta(\alpha^r)$ takes simply into account the extra area added by the $\ell(\alpha)-1$ forced unit north steps. 
\end{proof}

\section{Colorings and (co)inversions}

Given $n\in \mathbb{Z}_{>0}$, let $G=([n],E)$ be a (simple) graph. 

A \emph{coloring} of $G$ is simply a function $\kappa:[n]\to \mathbb{Z}_{>0}$. We call $\mathsf{C}(G)$ the set of colorings of $G$. We can and will identify a coloring $\kappa\in \mathsf{C}(G)$ with the word $\kappa(1)\kappa(2)\cdots \kappa(n)$ in the alphabet $\mathbb{Z}_{>0}$, as well as with the element $(\kappa(1),\kappa(2),\cdots , \kappa(n))\in \mathbb{Z}_{>0}^n$, so that $\mathsf{C}(G)$ is identified with $\mathbb{Z}_{>0}^n$. Also, the elements of the symmetric group $\mathfrak{S}_n$ can be identified as elements of $\mathsf{C}(G)$ in the natural way.

A \emph{coloring} of $G$ is called \emph{proper} if $(i,j)\in E$ implies $\kappa(i)\neq \kappa(j)$. We call $\mathsf{PC}(G)$ the set of proper colorings of $G$. Notice that with the above identifications we always have $\mathfrak{S}_n\subseteq \mathsf{PC}(G)$.

Given $\kappa\in \mathsf{C}(G)$ a \emph{$G$-inversion} of $\kappa$ is a pair $(i,j)$ with $\{i,j\}\in E$, $i<j$ and $\kappa(i)>\kappa(j)$. Similarly, a  \emph{$G$-coinversion} of $\kappa$ is a pair $(i,j)$ with $\{i,j\}\in E$, $i<j$ and $\kappa(i)<\kappa(j)$. We denote by $\mathsf{Inv}_G(\kappa)$, respectively $\mathsf{CoInv}_G(\kappa)$, the set of $G$-inversions, respectively $G$-coinversions, of $\kappa$. Observe that with our notation we can identify both $\mathsf{Inv}_G(\kappa)$ and $\mathsf{CoInv}_G(\kappa)$ with (disjoint) subsets of $E$, and if $\kappa\in \mathsf{PC}(G)$, then one is the complement of the other in $E$.

We can now set for every $\kappa\in \mathsf{C}(G)$
\[ \mathsf{inv}_G(\kappa):=|\mathsf{Inv}_G(\kappa)|\quad \text{ and }\quad \mathsf{coinv}_G(\kappa):=|\mathsf{CoInv}_G(\kappa)|.\]
\begin{example} \label{ex:coinv_computation}
Consider the graph $G$ in Figure~\ref{fig:interval_graph}, and $\sigma=31852647\in \mathfrak{S}_8\subseteq \mathsf{PC}(G)$. Then 
\begin{align*}
	\mathsf{Inv}_G(\sigma) & =\{(1,2), (3,4),(3,5),(3,6),(6,7)\}\text{ and}\\
	\mathsf{CoInv}_G(\sigma)& =\{(1,3),(2,3),(2,4),(2,5),(2,6),(2,7),(5,6),(5,7),(6,8),(7,8)\},
\end{align*}
so that $\mathsf{inv}_G(\sigma)=5$ and $\mathsf{coinv}_G(\sigma)=10$.
\end{example}
\begin{remark} \label{rmk:inv_for_Kn}
Observe that in the case of the \emph{complete graph} $G=K_n=([n],E)$, given a permutation $\sigma\in\mathfrak{S}_n$ (which we think of as a proper coloring of $K_n$), $\mathsf{Inv}_{K_n}(\sigma)$ is just the set of the usual \emph{inversions} of $\sigma$, and $\mathsf{inv}_{K_n}(\sigma)$ is usually denoted simply by $\mathsf{inv}(\sigma)$.
\end{remark}

Let $\phi:\mathsf{C}(G)\to \mathfrak{S}_n$ be the \emph{standardization} from left to right: given the word $\kappa(1)\kappa(2)\cdots\kappa(n)$, if $c_1<c_2<\cdots<c_k$ is the ordered set of values $\kappa(i)$, then $\phi(\kappa)$ is the permutation obtained by replacing the $d_1$ occurrences of $c_1$ with the numbers $1,2,\dots,d_1$ from left to right, then the $d_2$ occurrences of $c_2$ with the numbers $d_1+1,d_1+2,\dots,d_1+d_2$ from left to right, and so on. For example $\phi(3253353)=216 347 5$.

\begin{remark} \label{rem:Inv_CoInv}
	Observe that for any $\kappa\in \mathsf{C}(G)$, 
	\[\mathsf{CoInv}_G(\kappa)\subseteq \mathsf{CoInv}_G(\phi(\kappa))\quad \text{and}\quad \mathsf{Inv}_G(\kappa)= \mathsf{Inv}_G(\phi(\kappa)).\]
	The asymmetry is due to the fact that the standardization $\phi$ is from left to right. But observe that if $\kappa\in \mathsf{PC}(G)$, then in fact $\mathsf{CoInv}_G(\kappa)= \mathsf{CoInv}_G(\phi(\kappa))$ as well.
\end{remark}

Recall that given a word $w=w_1w_2\cdots w_n$ in a totally ordered alphabet, the set of \emph{descents} of $w$ is the set
\[ \mathsf{Des}(w):=\{i\in [n-1]\mid w_i>w_{i+1} \}. \] 

While the next two lemmas are well known (they can be understood using Stanley's theory of $P$-partitions, see e.g\  \cite{Stanley-Book-1999}*{Chapter~7}), we prefer to provide here a short direct proof for completeness.

\begin{lemma}
	Given a simple graph $G=([n],E)$ and given $\sigma\in \mathfrak{S}_n$, the following statements about $\kappa\in \mathsf{C}(G)$ are equivalent:
	\begin{enumerate}
		\item  $\phi(\kappa)=\sigma$;
		\item $\kappa(\sigma^{-1}(i))\leq \kappa(\sigma^{-1}(i+1))$ for every $i\in [n-1]$ and\\
		$\kappa(\sigma^{-1}(i))< \kappa(\sigma^{-1}(i+1))$ for every $i\in \mathsf{Des}(\sigma^{-1})$.
	\end{enumerate}
\end{lemma}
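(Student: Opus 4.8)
The plan is to prove the equivalence directly by unwinding the definition of the standardization $\phi$. Let me write $\sigma^{-1}(i) = p_i$ for the positions, so that reading the word $\kappa$ in the order $\kappa(p_1), \kappa(p_2), \dots, \kappa(p_n)$ is exactly reading the letters of $\kappa$ sorted as $\phi$ would sort them: $p_1$ is the position that $\phi$ assigns value $1$, $p_2$ the position assigned value $2$, and so on. The key observation is that $\phi(\kappa) = \sigma$ means precisely that when we sort the positions $1, 2, \dots, n$ by $(\kappa(j), j)$ lexicographically (first by color, ties broken by position from left to right), we get the sequence $p_1, p_2, \dots, p_n$.

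First I would prove the forward direction $(1) \Rightarrow (2)$. Assuming $\phi(\kappa) = \sigma$, the sequence $\kappa(p_1), \kappa(p_2), \dots, \kappa(p_n)$ is weakly increasing because $\phi$ lists positions in order of increasing color; this gives the first set of inequalities. For the strict inequalities: if $i \in \mathsf{Des}(\sigma^{-1})$, i.e. $p_i > p_{i+1}$, then since $\phi$ breaks ties among equal colors by listing smaller positions first, we cannot have $\kappa(p_i) = \kappa(p_{i+1})$ with $p_i > p_{i+1}$ — that would contradict the left-to-right tie-breaking. Hence $\kappa(p_i) < \kappa(p_{i+1})$.

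Next the reverse direction $(2) \Rightarrow (1)$. Given a $\kappa$ satisfying the two conditions, I would show that applying $\phi$ to $\kappa$ recovers $\sigma$, equivalently that $\phi(\kappa)^{-1} = \sigma^{-1}$, equivalently that $p_1, \dots, p_n$ is the sequence of positions in the order $\phi$ produces. By definition $\phi$ produces the unique reordering of $1, \dots, n$ that is nondecreasing in $\kappa$-value and, within each block of equal $\kappa$-value, increasing in position. The sequence $p_1, \dots, p_n$ is nondecreasing in $\kappa$-value by the first hypothesis. Within a maximal run $p_i, p_{i+1}, \dots, p_j$ on which $\kappa$ is constant, none of $i, i+1, \dots, j-1$ lies in $\mathsf{Des}(\sigma^{-1})$ (the second hypothesis rules out $\kappa(p_k) = \kappa(p_{k+1})$ whenever $k \in \mathsf{Des}(\sigma^{-1})$, so equality of consecutive colors forces $k \notin \mathsf{Des}(\sigma^{-1})$, i.e. $p_k < p_{k+1}$); hence $p_i < p_{i+1} < \cdots < p_j$, so the run is increasing in position. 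Since $\phi$'s output is characterized by exactly these two properties, $\phi(\kappa)^{-1} = \sigma^{-1}$, so $\phi(\kappa) = \sigma$.

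I do not expect a serious obstacle here — the whole content is bookkeeping about the one-line notation of $\sigma^{-1}$ versus the sorting order defining $\phi$. The one subtlety to be careful about is the direction of the tie-breaking in $\phi$ (left to right) and how it interacts with descents of $\sigma^{-1}$ rather than of $\sigma$; getting the inequality $\kappa(\sigma^{-1}(i)) < \kappa(\sigma^{-1}(i+1))$ on exactly the set $\mathsf{Des}(\sigma^{-1})$, and not its complement, is the only place an error could creep in, so I would state the correspondence ``$p_i > p_{i+1} \iff$ equal colors forbidden'' explicitly and verify it against the worked example $\phi(3253353) = 2163475$ before moving on.
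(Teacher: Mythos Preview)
Your proposal is correct and follows essentially the same approach as the paper: the forward direction is identical, and for the reverse direction the paper also unwinds the definition of $\phi$, though it phrases it as checking that for all $i<j$ one has $\kappa(i)\leq\kappa(j)$ iff $\sigma(i)<\sigma(j)$ (via chaining the inequalities in (2)), rather than invoking uniqueness of the lexicographic sort as you do. These are two ways of saying the same thing.
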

\begin{proof}
	Suppose first (1), i.e.\ suppose that $\kappa\in \mathsf{C}(G)$ and $\phi(\kappa)=\sigma$.
	
	Observe that the standardization from left to right immediately implies that if $\sigma^{-1}(i)>\sigma^{-1}(i+1)$, then $\kappa(\sigma^{-1}(i+1))> \kappa(\sigma^{-1}(i))$. Similarly, if $\sigma^{-1}(i)<\sigma^{-1}(i+1)$, then $\kappa(\sigma^{-1}(i))\leq \kappa(\sigma^{-1}(i+1))$.
	
	Suppose now that $\kappa\in \mathsf{C}(G)$ satisfies (2).
	
	Let us show that $\phi(\kappa)=\sigma$: we want to show that for every $i,j\in [n]$ with $i<j$, $\kappa(i)\leq \kappa(j)$ if and only if $\sigma(i)<\sigma(j)$.
	
	If $\sigma(i)<\sigma(j)$, then using the first condition in (2) we have
	\[\kappa(i)=\kappa(\sigma^{-1}(\sigma(i)))\leq \kappa(\sigma^{-1}(\sigma(i)+1))\leq \kappa(\sigma^{-1}(\sigma(i)+2))\leq\cdots \leq \kappa(\sigma^{-1}(\sigma(j)))=\kappa(j).\]
	
	Viceversa, if $\sigma(i)>\sigma(j)$, then using the first condition in (2) we have
	\[\kappa(j)=\kappa(\sigma^{-1}(\sigma(j)))\leq \kappa(\sigma^{-1}(\sigma(j)+1))\leq \kappa(\sigma^{-1}(\sigma(j)+2))\leq\cdots \leq \kappa(\sigma^{-1}(\sigma(i)))=\kappa(i),\]
	and if there was the equality $\kappa(j)=\kappa(i)$, then using the second condition in (2) we must have
	\[j=\sigma^{-1}(\sigma(j))<\sigma^{-1}(\sigma(j)+1)<\sigma^{-1}(\sigma(j)+2)<\cdots <\sigma^{-1}(\sigma(i))=i\]
	contradicting $i<j$. This completes the proof of $\phi(\kappa)=\sigma$, and therefore also the proof of the lemma.
\end{proof}
For any $\kappa\in \mathsf{C}(G)$, set
\[x_\kappa:=\prod_{i=1}^nx_{\kappa(i)},\]
where $x_1,x_2,\dots$ are variables.

An immediate corollary of the previous lemma is the following result.
\begin{lemma}
	Given a simple graph $G=([n],E)$ and given $\sigma\in \mathfrak{S}_n$, we have
	\[\mathop{\sum_{\kappa\in \mathsf{C}(G)}}_{\phi(\kappa)=\sigma}x_\kappa=L_{n,\mathsf{Des}(\sigma^{-1})}.\]	
\end{lemma}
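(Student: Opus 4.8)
The plan is to deduce this directly from the previous lemma by a change of summation index. The previous lemma says that $\phi(\kappa)=\sigma$ holds precisely when
\[\kappa(\sigma^{-1}(1))\leq \kappa(\sigma^{-1}(2))\leq \cdots \leq \kappa(\sigma^{-1}(n)),\]
with the inequality strict at every position $j\in \mathsf{Des}(\sigma^{-1})$. So the first step is to set $i_j:=\kappa(\sigma^{-1}(j))$ for $j\in[n]$; as $\kappa$ ranges over the colorings with $\phi(\kappa)=\sigma$, the tuple $(i_1,\dots,i_n)$ ranges exactly over the weakly increasing sequences in $\mathbb{Z}_{>0}$ with $i_j<i_{j+1}$ whenever $j\in \mathsf{Des}(\sigma^{-1})$ (this is a genuine bijection, since $\sigma^{-1}$ is a bijection of $[n]$, so $\kappa$ is recovered as $\kappa(k)=i_{\sigma(k)}$).

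Next I would rewrite the monomial. Since $\sigma^{-1}$ permutes $[n]$, reindexing the product over $i\in[n]$ by $i=\sigma^{-1}(j)$ gives
\[x_\kappa=\prod_{i=1}^n x_{\kappa(i)}=\prod_{j=1}^n x_{\kappa(\sigma^{-1}(j))}=x_{i_1}x_{i_2}\cdots x_{i_n}.\]
Substituting into the sum, and noting that under $i_j\le i_{j+1}$ the condition $i_j\neq i_{j+1}$ is the same as strict increase, we obtain
\[\mathop{\sum_{\kappa\in \mathsf{C}(G)}}_{\phi(\kappa)=\sigma}x_\kappa=\mathop{\sum_{i_1\leq i_2\leq \cdots \leq i_n}}_{j\in \mathsf{Des}(\sigma^{-1})\Rightarrow i_j\neq i_{j+1}}x_{i_1}x_{i_2}\cdots x_{i_n},\]
which is exactly the definition of $L_{n,\mathsf{Des}(\sigma^{-1})}$ given in Section~\ref{sec:qsym}. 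This completes the proof.

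There is essentially no obstacle here: the content is entirely in the previous lemma, and the only thing to be careful about is that the graph $G$ plays no role whatsoever (the sum is over all of $\mathsf{C}(G)\cong \mathbb{Z}_{>0}^n$), and that the relabeling $i_j=\kappa(\sigma^{-1}(j))$ is a bijection onto the index set appearing in the definition of the fundamental quasisymmetric function. If one wants to be fully explicit, one can note the inverse map sends a sequence $(i_1,\dots,i_n)$ to the coloring $k\mapsto i_{\sigma(k)}$, and check via the lemma that its standardization is $\sigma$; but this is the same verification already packaged in the previous lemma.
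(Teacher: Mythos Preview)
Your proof is correct and is exactly the argument the paper has in mind: the paper simply states that this lemma is ``an immediate corollary of the previous lemma,'' and what you have written is precisely the unpacking of that claim via the change of variables $i_j=\kappa(\sigma^{-1}(j))$.
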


Let us denote by $\mathsf{Inv}(G)$ the (finite) set of possible sets of $G$-inversions of a coloring of $G$: in other words
\[\mathsf{Inv}(G):=\{\mathsf{Inv}_G(\sigma)\mid \sigma\in \mathfrak{S}_n\}.\]
Similarly, set
\[\mathsf{CoInv}(G):=\{\mathsf{CoInv}_G(\sigma)\mid \sigma\in \mathfrak{S}_n\}.\]

Combining the results and remarks of this section, we get the following formula.
\begin{proposition}\label{prop:invG_LLT}
	Given a simple graph $G=([n],E)$, for every $S\in \mathsf{Inv}(G)$ we have
	\[\mathop{\sum_{\kappa\in \mathsf{C}(G)}}_{\mathsf{Inv}_G(\kappa)=S}q^{\mathsf{inv}_G(\kappa)}x_\kappa=\mathop{\sum_{\sigma\in \mathfrak{S}_n}}_{\mathsf{Inv}_G(\sigma)=S}q^{\mathsf{inv}_G(\sigma)}L_{n,\mathsf{Des}(\sigma^{-1})}=q^{|S|}\mathop{\sum_{\sigma\in \mathfrak{S}_n}}_{\mathsf{Inv}_G(\sigma)=S}L_{n,\mathsf{Des}(\sigma^{-1})}.\]
\end{proposition}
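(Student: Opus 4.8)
The plan is to combine the two lemmas of this section with Remark~\ref{rem:Inv_CoInv}; the statement then follows by a short bookkeeping argument, and there is no real obstacle.

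First I would dispose of the outer equality. If $\sigma\in\mathfrak{S}_n$ satisfies $\mathsf{Inv}_G(\sigma)=S$, then $\mathsf{inv}_G(\sigma)=|\mathsf{Inv}_G(\sigma)|=|S|$, so the power $q^{\mathsf{inv}_G(\sigma)}=q^{|S|}$ is constant over the index set and can be pulled out; the same applies to the leftmost sum, since every $\kappa\in\mathsf{C}(G)$ with $\mathsf{Inv}_G(\kappa)=S$ has $\mathsf{inv}_G(\kappa)=|\mathsf{Inv}_G(\kappa)|=|S|$. Hence
\[
\mathop{\sum_{\kappa\in \mathsf{C}(G)}}_{\mathsf{Inv}_G(\kappa)=S}q^{\mathsf{inv}_G(\kappa)}x_\kappa=q^{|S|}\mathop{\sum_{\kappa\in \mathsf{C}(G)}}_{\mathsf{Inv}_G(\kappa)=S}x_\kappa,
\]
and it remains to show that $\sum_{\kappa:\,\mathsf{Inv}_G(\kappa)=S}x_\kappa=\sum_{\sigma:\,\mathsf{Inv}_G(\sigma)=S}L_{n,\mathsf{Des}(\sigma^{-1})}$.

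For this I would partition $\mathsf{C}(G)$ into the fibers of the standardization map $\phi$. By Remark~\ref{rem:Inv_CoInv} we have $\mathsf{Inv}_G(\kappa)=\mathsf{Inv}_G(\phi(\kappa))$ for every coloring $\kappa$, so the condition $\mathsf{Inv}_G(\kappa)=S$ depends only on $\phi(\kappa)$ and is equivalent to $\phi(\kappa)=\sigma$ for some $\sigma\in\mathfrak{S}_n$ with $\mathsf{Inv}_G(\sigma)=S$. Applying the second lemma of this section, which gives $\sum_{\kappa:\,\phi(\kappa)=\sigma}x_\kappa=L_{n,\mathsf{Des}(\sigma^{-1})}$, to each such $\sigma$ and summing yields
\[
\mathop{\sum_{\kappa\in \mathsf{C}(G)}}_{\mathsf{Inv}_G(\kappa)=S}x_\kappa=\mathop{\sum_{\sigma\in \mathfrak{S}_n}}_{\mathsf{Inv}_G(\sigma)=S}\ \mathop{\sum_{\kappa\in \mathsf{C}(G)}}_{\phi(\kappa)=\sigma}x_\kappa=\mathop{\sum_{\sigma\in \mathfrak{S}_n}}_{\mathsf{Inv}_G(\sigma)=S}L_{n,\mathsf{Des}(\sigma^{-1})}.
\]
Reinserting the factor $q^{|S|}=q^{\mathsf{inv}_G(\sigma)}$ then identifies all three displayed expressions.

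The argument is entirely routine; the only point demanding a bit of care is that one must index by $\mathsf{Inv}$ rather than $\mathsf{CoInv}$, because Remark~\ref{rem:Inv_CoInv} asserts $\mathsf{Inv}_G(\kappa)=\mathsf{Inv}_G(\phi(\kappa))$ for arbitrary colorings but only gives $\mathsf{CoInv}_G(\kappa)=\mathsf{CoInv}_G(\phi(\kappa))$ under properness. One may also note that the hypothesis $S\in\mathsf{Inv}(G)$ is exactly what makes the index set $\{\sigma\in\mathfrak{S}_n\mid \mathsf{Inv}_G(\sigma)=S\}$ nonempty (both sides are empty, hence equal, otherwise).
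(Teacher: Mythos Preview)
Your proof is correct and follows exactly the approach the paper indicates: it simply says the formula is obtained by ``combining the results and remarks of this section,'' and you have spelled out precisely that combination (Remark~\ref{rem:Inv_CoInv} to reduce to fibers of $\phi$, then the second lemma to evaluate each fiber, and pulling out the constant $q^{|S|}$).
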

Notice that, because of Remark~\ref{rem:Inv_CoInv}, a similar formula with $\mathsf{Inv}$ and $\mathsf{inv}$ replaced by $\mathsf{CoInv}$ and $\mathsf{coinv}$ is not supposed to hold in general.

\section{Interval graphs and colorings}

Given $n\in \mathbb{Z}_{>0}$, let $G=([n],E)$ be an interval graph, i.e.\ $G\in \mathcal{IG}_n$.

Given $\tau \in \mathfrak{S}_n$, set \[\mathsf{Des}_G(\tau):=\{i\in [n-1]\mid \tau(i)>\tau(i+1)\text{ or } \{\tau(i),\tau(i+1)\}\in E(G)\} \subseteq [n-1]. \]

While the next two lemmas are implicit in the work of Shareshian and Wachs \cite{Shareshian_Wachs_Advances}, we prefer to provide here a short direct proof, appearing in unpublished work of Philippe Nadeau and Vasu Tewari, for completeness. We will relate them to the results in \cite{Shareshian_Wachs_Advances} in Section~\ref{sec:chromatic_LLT}. 

\begin{lemma}
	Given $G=([n],E)$ an interval graph and given $\sigma\in \mathfrak{S}_n$, the following statements about $\kappa\in \mathsf{C}(G)$ are equivalent:
	\begin{enumerate}
		\item $\kappa\in \mathsf{PC}(G)$ and $\phi(\kappa)=\sigma$;
		\item $\kappa(\sigma^{-1}(i))\leq \kappa(\sigma^{-1}(i+1))$ for every $i\in [n-1]$ and\\
		$\kappa(\sigma^{-1}(i))< \kappa(\sigma^{-1}(i+1))$ for every $i\in \mathsf{Des}_G(\sigma^{-1})$.
	\end{enumerate}
\end{lemma}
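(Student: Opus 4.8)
The plan is to bootstrap off the lemma from Section~4 characterizing $\phi(\kappa)=\sigma$, since condition~(2) above differs from the analogous condition there only by demanding, in addition, the strict inequalities $\kappa(\sigma^{-1}(i))<\kappa(\sigma^{-1}(i+1))$ at those positions $i$ for which $\{\sigma^{-1}(i),\sigma^{-1}(i+1)\}\in E$. Because $\mathsf{Des}(\sigma^{-1})\subseteq \mathsf{Des}_G(\sigma^{-1})$ by definition, condition~(2) immediately entails the hypotheses of that earlier lemma, so $\phi(\kappa)=\sigma$ is automatic; the whole content of the statement is therefore the interaction between \emph{properness} of $\kappa$ and these extra strict inequalities. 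I would prove the two implications separately.

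For $(1)\Rightarrow(2)$: assuming $\kappa\in\mathsf{PC}(G)$ with $\phi(\kappa)=\sigma$, the weak inequalities $\kappa(\sigma^{-1}(i))\leq\kappa(\sigma^{-1}(i+1))$ for all $i$, and the strictness at ordinary descents $i\in\mathsf{Des}(\sigma^{-1})$, are given by the earlier lemma. If instead $i\in\mathsf{Des}_G(\sigma^{-1})$ arises from an edge, i.e.\ $\{\sigma^{-1}(i),\sigma^{-1}(i+1)\}\in E$, then $\kappa(\sigma^{-1}(i))\neq\kappa(\sigma^{-1}(i+1))$ by properness, and combined with the weak inequality this forces strictness. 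This direction uses no special feature of interval graphs.

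For $(2)\Rightarrow(1)$: by the remark above $\phi(\kappa)=\sigma$, and the proof of the earlier lemma actually shows that for $a<b$ one has $\kappa(a)\leq\kappa(b)$ if and only if $\sigma(a)<\sigma(b)$; in particular $\kappa$ is already proper on every edge $\{a,b\}$ with $a<b$ and $\sigma(a)>\sigma(b)$. So fix an edge $\{a,b\}\in E$ with $a<b$ and $\sigma(a)<\sigma(b)$. Since $\kappa\circ\sigma^{-1}$ is weakly increasing we have $\kappa(a)=\kappa(\sigma^{-1}(\sigma(a)))\leq\cdots\leq\kappa(\sigma^{-1}(\sigma(b)))=\kappa(b)$, so it suffices to produce an index $i$ with $\sigma(a)\leq i<\sigma(b)$ and $i\in\mathsf{Des}_G(\sigma^{-1})$: then the $i$-th step in that chain is strict and $\kappa(a)<\kappa(b)$, giving properness on this edge.

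The key claim — and the only place the interval structure is used — is exactly that such an $i$ exists. Suppose not; then for every $i$ in the range $\{\sigma(a),\dots,\sigma(b)-1\}$ we have $\sigma^{-1}(i)<\sigma^{-1}(i+1)$ and $\{\sigma^{-1}(i),\sigma^{-1}(i+1)\}\notin E$, so $\sigma^{-1}$ is strictly increasing on $\{\sigma(a),\dots,\sigma(b)\}$. Setting $v:=\sigma^{-1}(\sigma(a)+1)$ we get $a=\sigma^{-1}(\sigma(a))<v\leq\sigma^{-1}(\sigma(b))=b$. But $\{a,b\}\in E$ with $a<b$, so the defining property of interval graphs forces $\{a,v\}\in E$, contradicting $\{\sigma^{-1}(\sigma(a)),\sigma^{-1}(\sigma(a)+1)\}=\{a,v\}\notin E$. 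This contradiction proves the claim and completes $(2)\Rightarrow(1)$. I expect the only delicate part of the write-up to be the index bookkeeping between $\sigma$ and $\sigma^{-1}$ and the ranges involved; the mathematical heart is the single application of the interval-graph property just sketched.
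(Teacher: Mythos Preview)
Your proposal is correct and follows essentially the same argument as the paper. The only organizational difference is that you explicitly bootstrap $\phi(\kappa)=\sigma$ from the Section~4 lemma, whereas the paper re-derives it inline; the key interval-graph step (deriving $\{\sigma^{-1}(i),\sigma^{-1}(i+1)\}\in E$ from $\{a,b\}\in E$ via the increasing chain $\sigma^{-1}(\sigma(a))<\cdots<\sigma^{-1}(\sigma(b))$) is identical in both.
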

\begin{proof}
	Suppose first (1), i.e.\ suppose that $\kappa\in \mathsf{PC}(G)$ and $\phi(\kappa)=\sigma$.
	
	Observe that the standardization from left to right immediately implies that if $\sigma^{-1}(i)>\sigma^{-1}(i+1)$, then $\kappa(\sigma^{-1}(i+1))> \kappa(\sigma^{-1}(i))$. Similarly, if $\sigma^{-1}(i)<\sigma^{-1}(i+1)$, then $\kappa(\sigma^{-1}(i))\leq \kappa(\sigma^{-1}(i+1))$. But if $\{\sigma^{-1}(i),\sigma^{-1}(i+1)\}\in E$, then the last inequality must be strict since $\kappa\in \mathsf{PC}(G)$.
	
	Suppose now that $\kappa\in \mathsf{C}(G)$ satisfies (2).
	
	Let us show that $\kappa\in \mathsf{PC}(G)$: given $i,j\in [n]$ with $i<j$ and $\{\sigma^{-1}(i),\sigma^{-1}(i)\}\in E$, we want to show that  $\kappa(\sigma^{-1}(i))\neq \kappa(\sigma^{-1}(j))$. By iterating the first inequality of (2) we must have 
	\[\kappa(\sigma^{-1}(i))\leq \kappa(\sigma^{-1}(i+1))\leq\cdots \leq  \kappa(\sigma^{-1}(j)).\] If by contradiction we had the equality $\kappa(\sigma^{-1}(i))=\kappa(\sigma^{-1}(j))$, then the second condition in (2) would imply that \[\sigma^{-1}(i)<\sigma^{-1}(i+1)<\sigma^{-1}(i+2)<\cdots <\sigma^{-1}(j)  \]
	and that $\{\sigma^{-1}(i+r),\sigma^{-1}(i+r+1)\}\notin E$ for every $r=0,1,\dots,j-i-1$. But since our $G$ is an interval graph, $\{\sigma^{-1}(i),\sigma^{-1}(j)\}\in E$ implies $\{\sigma^{-1}(i),\sigma^{-1}(i+1)\}\in E$, which gives a contradiction. Hence we must have $\kappa(\sigma^{-1}(i))<\kappa(\sigma^{-1}(j))$, completing the proof that $\kappa$ is in $\mathsf{PC}(G)$.
	
	Now let us show that $\phi(\kappa)=\sigma$: we want to show that for every $i,j\in [n]$ with $i<j$, $\kappa(i)\leq \kappa(j)$ if and only if $\sigma(i)<\sigma(j)$.
	
	If $\sigma(i)<\sigma(j)$, then using the first condition in (2) we have
	\[\kappa(i)=\kappa(\sigma^{-1}(\sigma(i)))\leq \kappa(\sigma^{-1}(\sigma(i)+1))\leq \kappa(\sigma^{-1}(\sigma(i)+2))\leq\cdots \leq \kappa(\sigma^{-1}(\sigma(j)))=\kappa(j).\]
	
	Viceversa, if $\sigma(i)>\sigma(j)$, then using the first condition in (2) we have
	\[\kappa(j)=\kappa(\sigma^{-1}(\sigma(j)))\leq \kappa(\sigma^{-1}(\sigma(j)+1))\leq \kappa(\sigma^{-1}(\sigma(j)+2))\leq\cdots \leq \kappa(\sigma^{-1}(\sigma(i)))=\kappa(i),\]
	and if there was the equality $\kappa(j)=\kappa(i)$, then using the second condition in (2) we must have
	\[j=\sigma^{-1}(\sigma(j))<\sigma^{-1}(\sigma(j)+1)<\sigma^{-1}(\sigma(j)+2)<\cdots <\sigma^{-1}(\sigma(i))=i\]
	contradicting $i<j$. This completes the proof of $\phi(\kappa)=\sigma$, and therefore also the proof of the lemma.
\end{proof}
An immediate corollary of the previous lemma is the following result.
\begin{lemma}
	Given $G=([n],E)$ an interval graph and given $\sigma\in \mathfrak{S}_n$, we have
	\[\mathop{\sum_{\kappa\in \mathsf{PC}(G)}}_{\phi(\kappa)=\sigma}x_\kappa=L_{n,\mathsf{Des}_G(\sigma^{-1})}.\]	
\end{lemma}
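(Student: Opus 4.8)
The plan is to deduce the identity directly from the preceding lemma, which already characterizes the relevant colorings. First I would fix $\sigma\in \mathfrak{S}_n$ and invoke the equivalence (1)$\Leftrightarrow$(2): the colorings $\kappa\in \mathsf{PC}(G)$ with $\phi(\kappa)=\sigma$ are exactly those $\kappa\in \mathsf{C}(G)$ satisfying
\[
\kappa(\sigma^{-1}(1))\leq \kappa(\sigma^{-1}(2))\leq \cdots \leq \kappa(\sigma^{-1}(n)),
\]
with the inequality between consecutive terms being strict at every index $i\in \mathsf{Des}_G(\sigma^{-1})$.

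Next I would perform the change of variables $i_j:=\kappa(\sigma^{-1}(j))$ for $j\in [n]$. Since $j\mapsto \sigma^{-1}(j)$ is a bijection of $[n]$, this sets up a bijection between colorings $\kappa\in \mathsf{C}(G)$ and tuples $(i_1,\dots,i_n)\in \mathbb{Z}_{>0}^n$, and under it $x_\kappa=\prod_{v\in [n]}x_{\kappa(v)}=\prod_{j=1}^n x_{i_j}$ (the product is just reindexed). The condition from the preceding lemma becomes exactly $i_1\leq i_2\leq \cdots \leq i_n$ with $i_j\neq i_{j+1}$ whenever $j\in \mathsf{Des}_G(\sigma^{-1})$ (using that weak monotonicity plus $i_j\neq i_{j+1}$ is the same as $i_j<i_{j+1}$). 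Hence
\[
\mathop{\sum_{\kappa\in \mathsf{PC}(G)}}_{\phi(\kappa)=\sigma}x_\kappa
=\mathop{\sum_{i_1\leq i_2\leq \cdots \leq i_n}}_{j\in \mathsf{Des}_G(\sigma^{-1})\Rightarrow i_j\neq i_{j+1}}x_{i_1}x_{i_2}\cdots x_{i_n},
\]
and the right-hand side is precisely the definition of $L_{n,\mathsf{Des}_G(\sigma^{-1})}$ recalled in Section~\ref{sec:qsym}, which finishes the argument.

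I do not expect any genuine obstacle here: the statement is a pure bookkeeping corollary of the previous lemma, and the only point requiring a moment's care is the reindexing of the monomial $x_\kappa$ under the substitution $i_j=\kappa(\sigma^{-1}(j))$ and the observation that the list $(i_1,\dots,i_n)$ ranges over \emph{all} weakly increasing tuples with the prescribed strict ascents (so that no proper-coloring constraint survives beyond what is already encoded by $\mathsf{Des}_G(\sigma^{-1})$). This mirrors exactly the way the earlier (non-interval) version of the corollary was obtained from its companion lemma in Section~4.
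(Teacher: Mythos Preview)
Your argument is correct and is precisely the approach the paper takes: it states this lemma as an immediate corollary of the preceding characterization lemma, and your reindexing $i_j=\kappa(\sigma^{-1}(j))$ together with the recognition of the defining sum for $L_{n,\mathsf{Des}_G(\sigma^{-1})}$ is exactly how that immediacy is unpacked.
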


Combining the previous lemmas with Remark~\ref{rem:Inv_CoInv}, we get the following formulas.
\begin{proposition} \label{prop:inv_coinv_formulae}
	Given $G=([n],E)$ an interval graph, for every $S\in \mathsf{Inv}(G)$ we have
	\[\mathop{\sum_{\kappa\in \mathsf{PC}(G)}}_{\mathsf{Inv}_G(\kappa)=S}q^{\mathsf{inv}_G(\kappa)}x_\kappa=\mathop{\sum_{\sigma\in \mathfrak{S}_n}}_{\mathsf{Inv}_G(\sigma)=S}q^{\mathsf{inv}_G(\sigma)}L_{n,\mathsf{Des}_G(\sigma^{-1})}=q^{|S|}\mathop{\sum_{\sigma\in \mathfrak{S}_n}}_{\mathsf{Inv}_G(\sigma)=S}L_{n,\mathsf{Des}_G(\sigma^{-1})},\]
	and for every $S\in \mathsf{CoInv}(G)$ we have
	\[\mathop{\sum_{\kappa\in \mathsf{PC}(G)}}_{\mathsf{CoInv}_G(\kappa)=S}q^{\mathsf{coinv}_G(\kappa)}x_\kappa=\mathop{\sum_{\sigma\in \mathfrak{S}_n}}_{\mathsf{CoInv}_G(\sigma)=S}q^{\mathsf{coinv}_G(\sigma)}L_{n,\mathsf{Des}_G(\sigma^{-1})}=q^{|S|}\mathop{\sum_{\sigma\in \mathfrak{S}_n}}_{\mathsf{CoInv}_G(\sigma)=S}L_{n,\mathsf{Des}_G(\sigma^{-1})}.\]
\end{proposition}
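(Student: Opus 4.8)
The plan is to mimic, in the proper-coloring setting, the argument behind Proposition~\ref{prop:invG_LLT}: partition the colorings according to their standardization $\phi$ and combine the two lemmas just proved with Remark~\ref{rem:Inv_CoInv}. Since $\mathfrak{S}_n\subseteq \mathsf{PC}(G)$ and $\phi$ restricts to the identity on $\mathfrak{S}_n$, the map $\phi$ gives a partition $\mathsf{PC}(G)=\bigsqcup_{\sigma\in\mathfrak{S}_n}\{\kappa\in\mathsf{PC}(G)\mid \phi(\kappa)=\sigma\}$. By Remark~\ref{rem:Inv_CoInv} we have $\mathsf{Inv}_G(\kappa)=\mathsf{Inv}_G(\phi(\kappa))$ for \emph{every} coloring $\kappa$, and --- this is the place where properness enters --- $\mathsf{CoInv}_G(\kappa)=\mathsf{CoInv}_G(\phi(\kappa))$ for every $\kappa\in\mathsf{PC}(G)$. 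Hence on each block of the partition the statistics $\mathsf{Inv}_G$, $\mathsf{inv}_G$ (respectively $\mathsf{CoInv}_G$, $\mathsf{coinv}_G$) are constant, equal to their values on $\sigma$.

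For the first identity I would fix $S\in\mathsf{Inv}(G)$ and observe that the block $\{\kappa\in\mathsf{PC}(G)\mid \phi(\kappa)=\sigma\}$ lies inside $\{\kappa\in\mathsf{PC}(G)\mid \mathsf{Inv}_G(\kappa)=S\}$ precisely when $\mathsf{Inv}_G(\sigma)=S$, so that
\[
\mathop{\sum_{\kappa\in\mathsf{PC}(G)}}_{\mathsf{Inv}_G(\kappa)=S}q^{\mathsf{inv}_G(\kappa)}x_\kappa
=\mathop{\sum_{\sigma\in\mathfrak{S}_n}}_{\mathsf{Inv}_G(\sigma)=S}q^{\mathsf{inv}_G(\sigma)}\mathop{\sum_{\kappa\in\mathsf{PC}(G)}}_{\phi(\kappa)=\sigma}x_\kappa
=\mathop{\sum_{\sigma\in\mathfrak{S}_n}}_{\mathsf{Inv}_G(\sigma)=S}q^{\mathsf{inv}_G(\sigma)}L_{n,\mathsf{Des}_G(\sigma^{-1})},
\]
where the last equality is the second of the two lemmas just proved. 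Since $\mathsf{inv}_G(\sigma)=|\mathsf{Inv}_G(\sigma)|=|S|$ over the whole index set, the factor $q^{|S|}$ pulls out, giving the third expression.

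For the second identity one repeats the argument verbatim with $\mathsf{Inv}$, $\mathsf{inv}$ replaced by $\mathsf{CoInv}$, $\mathsf{coinv}$; the only new ingredient is the equality $\mathsf{CoInv}_G(\kappa)=\mathsf{CoInv}_G(\phi(\kappa))$ valid for proper colorings (Remark~\ref{rem:Inv_CoInv}), which is exactly what makes the blocks of the partition refine the level sets of $\kappa\mapsto\mathsf{CoInv}_G(\kappa)$. I do not expect any genuine obstacle here: once the two preceding lemmas are in hand the content is pure bookkeeping. The one point worth flagging is that the $\mathsf{CoInv}$-version really does require properness (and, through the lemma, the interval-graph hypothesis), which is why --- as noted after Proposition~\ref{prop:invG_LLT} --- no such statement holds for arbitrary colorings of arbitrary graphs.
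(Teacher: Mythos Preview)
Your proposal is correct and is exactly the argument the paper has in mind: the paper simply writes ``Combining the previous lemmas with Remark~\ref{rem:Inv_CoInv}, we get the following formulas,'' and you have spelled out precisely this bookkeeping. Your remark about where properness (and the interval hypothesis via the lemma) is actually needed for the $\mathsf{CoInv}$-version is also accurate and matches the paper's comment after Proposition~\ref{prop:invG_LLT}.
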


\section{Increasing spanning forests and quasisymmetric functions}

Given a graph $G=([n],E)$, we say that a subgraph $F\subseteq G$ is a \emph{spanning forest} if $F$ is a forest on the vertices $[n]$. In this case, the connected components are labelled trees, with the vertex set contained in $[n]$. Given such a tree $T$, we call $root(T)$ its minimal vertex. Then $T$ is called \emph{increasing} if in the paths stemming from $root(T)$ the other vertices appear in increasing order. 

A spanning forest $F$ of a graph $G=([n],E)$ is called \emph{increasing} if all its connected components are increasing trees. In this case, we think of $F$ as the ordered collection $F=(T_1,T_2,\dots,T_k)$, where the $T_i$ are its connected components, ordered so that
\[
root(T_1)<root(T_2)<\cdots <root(T_k).
\]

For example, the forest $F=(T_1,T_2)$ in Figure~\ref{fig:isf_example}, where $T_1=(V(T_1),E(T_1))=(\{1,3\},\{(1,3)\})$ and $T_2=(V(T_2),E(T_2))=(\{2,4,5,6,7,8\},\{(2,4),(2,5),(2,6),(5,7),(6,8)\})$, is an increasing spanning forest of the graph $G$ in Figure~\ref{fig:interval_graph}, .

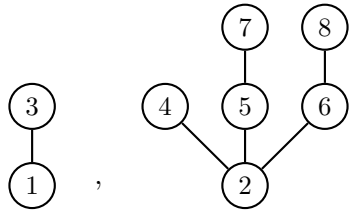
\begin{figure}[ht]
	\centering
	\begin{tikzpicture}[auto,node distance=3cm,
		thick,root node/.style={shape=circle, inner sep=3pt, fill=red,draw},main node/.style={shape=circle, inner sep=3pt, fill=black,draw},external node/.style={shape=circle, inner sep=6pt,draw},scale=.35]
		
		\node[external node] (1) at (0,0) {};
		\node[external node] (2) at (8,0) {};
		\node[external node] (3) at (0,3) {};
		\node[external node] (4) at (5,3) {};
		\node[external node] (5) at (8,3) {};
		\node[external node] (6) at (11,3) {};
		\node[external node] (7) at (8,6) {};
		\node[external node] (8) at (11,6) {};
		
		\node at (0,3) {$3$};
		\node at (0,0) {$1$};
		\node at (8,0) {$2$};
		\node at (5,3) {$4$};
		\node at (8,3) {$5$};
		\node at (11,3) {$6$};
		\node at (8,6) {$7$};
		\node at (11,6) {$8$};
		\node at (2.5,0) {\large{$,$}};

		\path
		(1) edge node {} (3)
		(2) edge node {} (4)
		(2) edge node {} (5)
		(2) edge node {} (6)
		(5) edge node {} (7)
		(6) edge node {} (8);
	\end{tikzpicture}
	\caption{An example of increasing spanning forest of the graph $G$ in Figure~\ref{fig:interval_graph}. }
	\label{fig:isf_example}
\end{figure}

 We denote by $\mathsf{ISF}(G)$ the set of increasing spanning forests of $G$.
 
 For example, for $G=([3],\{(1,2),(1,3)\})$, the increasing spanning forests of $G$ are $(([3],\{(1,2),(1,3)\}))$, $((\{1,3\},\{(1,3)\}),(\{2\},\varnothing))$, $((\{1,2\},\{(1,2)\}),(\{3\},\varnothing))$ and $((\{1\},\varnothing),(\{2\},\varnothing),(\{3\},\varnothing))$: see Figure~\ref{fig:isf_small_example}.
 
 \begin{figure}[ht]
 	\centering
 	\begin{tikzpicture}[auto,node distance=3cm,
 	thick,root node/.style={shape=circle, inner sep=3pt, fill=red,draw},main node/.style={shape=circle, inner sep=3pt, fill=black,draw},external node/.style={shape=circle, inner sep=6pt,draw},scale=.35]

 	\node[external node] (11) at (-1.5,0) {}; 	
 	\node[external node] (12) at (-3,3) {};
 	\node[external node] (13) at (0,3) {};

 	\node[external node] (21) at (5,0) {};
 	\node[external node] (22) at (5,3) {};
 	\node[external node] (23) at (8,0) {};

 	\node[external node] (31) at (13,0) {};
 	\node[external node] (32) at (13,3) {};
 	\node[external node] (33) at (16,0) {};
 	
 	\node[external node] (41) at (21,0) {};
 	\node[external node] (42) at (24,0) {};
 	\node[external node] (43) at (27,0) {};
  	\node at (-1.5,0) {$1$};	
 	\node at (-3,3) {$2$};
 	\node at (0,3) {$3$};
 	\node at (2.5,0) {\large{$;$}};
 	
 	\node at (5,0) {$1$};
 	\node at (5,3) {$3$};
 	\node at (6.5,0) {\large{$,$}};
 	\node at (8,0) {$2$};
 	\node at (10.5,0) {\large{$;$}};
 	
 	\node at (13,0) {$1$};
 	\node at (13,3) {$2$};
 	\node at (14.5,0) {\large{$,$}};
 	\node at (16,0) {$3$};
 	\node at (18.5,0) {\large{$;$}};
 	
 	\node at (21,0) {$1$};
 	\node at (22.5,0) {\large{$,$}};
 	\node at (24,0) {$2$};
 	\node at (25.5,0) {\large{$,$}};
 	\node at (27,0) {$3$};

 	\path
 	(11) edge node {} (12)
 	(11) edge node {} (13)
 	(21) edge node {} (22)
 	(31) edge node {} (32);
 	\end{tikzpicture}
 	\caption{The increasing spanning forests of the graph $G=([3],\{(1,2),(1,3)\})$. }
 	\label{fig:isf_small_example}
 \end{figure}
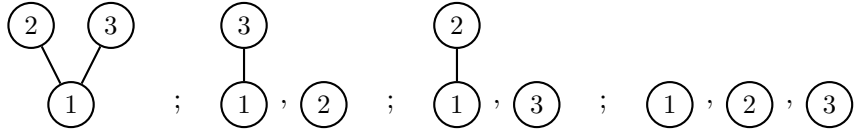

 \smallskip

Given a (simple) graph $G=([n],E)$ and an $F\in \mathsf{ISF}(G)$, $F=(T_1,T_2,\dots,T_k)$, we say that a pair $(u,v)$ with $u,v\in [n]$ is a \emph{$G$-inversion} of $F$ if $u\in V(T_i)$, $v\in V(T_j)$, $i>j$ and $(u,v)\in E$ (so that $u<v$). Given an edge $(u,v)\in E(T_i)$ of $T_i$ we define its \emph{weight} in $G$, denoted $\mathsf{wt}_G((u,v))$, to be the number of $w\in V(T_i)$ vertex of $T_i$ such that $u\leq w<v$ and $(w,v)\in E(G)$. So for every tree $T_i$ we define its \emph{weight} in $G$ as
\[ \mathsf{wt}_G(T_i)=\sum_{(u,v)\in E(T_i)}\mathsf{wt}_G((u,v))\]
and finally the \emph{weight} of $F$ (in $G$) as
\[\mathsf{wt}_G(F):=\# \{G\text{-inversions of }F \}+\sum_{i=1}^k \mathsf{wt}_G(T_i).\]

\begin{example}\label{ex:wtG_computation}
The forest $F=(T_1,T_2)$ in Figure~\ref{fig:isf_example} is an increasing spanning forest of the graph $G$ in Figure~\ref{fig:interval_graph}: we observe that its only $G$-inversion is $(2,3)$ (as $3$ occurs in $T_1$, $2$ occurs in $T_2$ and $(2,3)\in E$), $\mathsf{wt}_G(T_1)=\mathsf{wt}_G((1,3))=1$, and \begin{align*}
\mathsf{wt}_G(T_2)& =\mathsf{wt}_G((2,4))+\mathsf{wt}_G((2,5))+\mathsf{wt}_G((2,6))+\mathsf{wt}_G((5,7))+\mathsf{wt}_G((6,8))\\
& = 1+1+2+2+2=8,
\end{align*}
so that $\mathsf{wt}_G(F)=1+1+8=10$. 
\end{example}

\begin{remark}
Our weight function $\mathsf{wt}_G$ is almost the same as the one appearing in \cite{Abreu_Nigro_Forests}: the difference of the two is the number of edges in the forest.
\end{remark}

Let $G=([n],E)$ be an interval graph, i.e.\ $G\in \mathcal{IG}_n$. We are going to define a function $\Phi_G:\mathsf{PC}(G)\to \mathsf{ISF}(G)$ via an algorithm. But first we need an auxiliary function, that we call $\mathrm{getW}$ and we define with the following algorithm. 

\begin{algorithm}
	\caption{Algorithm defining the function $\mathrm{getW}(G,v,S,\kappa)$}\label{alg:getW}
	\begin{algorithmic}
		\Require A graph $G=([n],E)$, $S\subset [n]$, $v\in [n]\setminus S$, and $\kappa\in \mathsf{PC}(G)$
		\Ensure $W$\Comment{It will be $W\subseteq S\cup\{v\}$}
		\State $W \gets \{v\}$
		\For{$w\in S$}
		\If{$\{u\in W\mid u<w, (u,w)\in E\text{ and }\kappa(u)<\kappa(w)\}\neq \varnothing$}
		\State $W\gets W\cup\{w\}$
		\EndIf
		\EndFor
	\end{algorithmic}
\end{algorithm}

Now we are ready to define the function $\Phi_G$: Algorithm~\ref{alg:main} will construct the image $\Phi_G(\kappa)\in \mathsf{IFS}(G)$, $\Phi_G(\kappa)=(T_1,T_2,\dots)$, one tree at the time, in the order $T_1,T_2,\dots$ with $root(T_1)<root(T_2)<\cdots$.

\begin{algorithm}
	\caption{The algorithm defining the function $\Phi_G(\kappa)$}\label{alg:main}
	\begin{algorithmic}
		\Require A graph $G=([n],E)$ and $\kappa\in \mathsf{PC}(G)$
		\Ensure $F=(T_1,T_1,\dots)$ \Comment{It will be $F\in \mathsf{ISF}(G)$}
		\State $S \gets [n]$
		\State $F \gets (\ )$ \Comment{Empty list}
		\While{$S \neq \varnothing$}
		\State $v\gets \min(S)$
		\State $S\gets S\setminus\{v\}$
		\State $T=(V(T),E(T))\gets (\{v\},\varnothing)$ \Comment{The tree we are going to build}
		\State $W\gets \mathrm{getW}(G,v,S,\kappa)$ \Comment{Defined in Algorithm \ref{alg:getW}}
		\For{$i\in \{2,\dots,\# W \}$}
		\State $L \gets \{u\in V(T)\mid u<W_i\text{ and }(u,W_i)\in E\}$ \Comment{$W=\{W_1<W_2<\cdots <W_{\# W}\}$}
		\State $r \gets \# \{u\in L\mid (u,W_i)\in E\text{ and }\kappa(u)<\kappa(W_i)\}$
		\State $T \gets (V(T)\cup\{W_i\},E(T)\cup\{(L_{\# L-r+1},W_i)\})$   \Comment{$L=\{L_1<L_2<\cdots <L_{\# L}\}$}
		\State $S\gets S\setminus\{W_i\}$
		\EndFor
		\State Append $T$ to the right of $F$
		\EndWhile
	\end{algorithmic}
\end{algorithm}
Before discussing Algorithm~\ref{alg:main}, we illustrate it with an example.
\begin{example}
Let $G$ be the (interval) graph in Figure~\ref{fig:interval_graph}, and $\sigma=31852647\in \mathfrak{S}_8\subseteq \mathsf{PC}(G)$. 

We enter the first iteration of the \textbf{while} loop of Algorithm~\ref{alg:main} with $S=[8]$, we set $v=1$, $T=(\{1\},\varnothing)$, and the function $\mathrm{getW}$ defined by Algorithm~\ref{alg:getW} gives us $W=\{1,3\}$. So in the \textbf{for} loop we only have $i=2$ to work out: here $W_2=3$, $L=\{1\}$, $r=1$, and we update $T$, getting $T=(\{1,3\},\{(1,3)\})$, which becomes $T_1$ of our output $F=(T_1,\dots)$. Along the way of the first iteration of the \textbf{while} loop we updated $S$ ending up with $S=\{2,4,5,6,7,8\}$, which are the vertices that we still need to add to $F$.

We enter the second iteration of the \textbf{while} loop of Algorithm~\ref{alg:main} with $S=\{2,4,5,6,7,8\}$, we set $v=2$, $T=(\{2\},\varnothing)$, and the function $\mathrm{getW}$ defined by Algorithm~\ref{alg:getW} gives us $W=\{2,4,5,6,7,8\}$. Hence we enter the \textbf{for} loop: what happens is described below in Table~\ref{table1}.
\begin{center}
\begin{table}
\begin{tabular}{c|c|c|c|c|c}
	i & $W_i$ & L & r & T\\
	\hline 2 & 4 & \{2\} & 1 & (\{2,4\},\{(2,4)\})\\
	\hline 3 & 5 & \{2\} & 1 & (\{2,4,5\},\{(2,4),(2,5)\})\\
	\hline 4 & 6 & \{2,5\} & 2 & (\{2,4,5,6\},\{(2,4),(2,5),(2,6)\})\\
	\hline 5 & 7 & \{2,5,6\} & 2 & (\{2,4,5,6,7\},\{(2,4),(2,5),(2,6),(5,7)\})\\
	\hline 6 & 8 &\{6,7\} & 2 & (\{2,4,5,6,7,8\},\{(2,4),(2,5),(2,6),(5,7),(6,8)\})
\end{tabular}
\caption{}\label{table1}
\end{table}
\end{center}
Hence at the end we computed $T_2=(\{2,4,5,6,7,8\},\{(2,4),(2,5),(2,6),(5,7),(6,8)\})$. At this point $S$ is empty, hence the algorithm terminates, giving us $F=(T_1,T_2)$: this forest is depicted in Figure~\ref{fig:isf_example}. Notice that we already computed $\mathsf{coinv}_G(\sigma)=10$ in Example~\ref{ex:coinv_computation} and $\mathsf{wt}_G(F)=10$ in Example~\ref{ex:wtG_computation}: we will see shortly that this equality is not a coincidence. 
\end{example}
\begin{proposition} \label{prop:PhiG_welldefined}
Given a graph $G=([n],E)$, the Algorithm~\ref{alg:main} defines a function $\Phi_G:\mathsf{PC}(G)\to \mathsf{ISF}(G)$.
\end{proposition}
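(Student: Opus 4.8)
The plan is to show two things: that Algorithm~\ref{alg:main} terminates on every input $\kappa\in \mathsf{PC}(G)$, and that the output $F=(T_1,T_2,\dots,T_k)$ is genuinely an increasing spanning forest of $G$. Termination is clear: the \textbf{while} loop strictly decreases the size of $S$ (at each iteration we remove at least $v=\min(S)$, and possibly more via the inner \textbf{for} loop, and the \textbf{for} loop itself runs over a fixed finite set $\{2,\dots,\#W\}$), so after finitely many steps $S=\varnothing$. The remaining content is that $F\in \mathsf{ISF}(G)$, which I would break into: (i) every vertex of $[n]$ is added to exactly one tree $T_i$; (ii) each $T_i$ is a subgraph of $G$ (i.e.\ each edge we add lies in $E$) and is connected and acyclic, i.e.\ a tree; (iii) each $T_i$ is \emph{increasing} in the sense defined in the section; (iv) the trees come out ordered so that $root(T_1)<root(T_2)<\cdots<root(T_k)$, where $root(T_i)$ is the minimal vertex of $T_i$.

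For (i): in each \textbf{while} iteration we pick $v=\min(S)$, remove it from $S$, build the set $W\ni v$ via $\mathrm{getW}$, and in the \textbf{for} loop we remove from $S$ exactly the elements $W_2,\dots,W_{\#W}$; since $\mathrm{getW}$ outputs $W\subseteq S\cup\{v\}$ with $v\notin S$ at that point, the elements removed form exactly $W$, a subset of what remained, and they become $V(T)$. So the vertex sets of successive trees partition $[n]$. For (ii): whenever we add an edge $(L_{\#L-r+1},W_i)$, by construction $L\subseteq V(T)$ consists of vertices $u<W_i$ with $(u,W_i)\in E$, so the chosen edge is in $E$; and since $W_i$ is new to $V(T)$ and we attach it to an existing vertex, no cycle is created and connectivity is preserved, so $T$ stays a tree throughout the \textbf{for} loop (here one should note $L\neq\varnothing$: this is where interval-ness enters, since $W_i$ was placed in $W$ precisely because some $u\in W$ with $u<W_i$ and $(u,W_i)\in E$ existed, and by Remark~\ref{rem:interval_clique} — the perfect elimination property — the earlier vertices of $V(T)$ adjacent to $W_i$ form a nonempty set containing such a $u$). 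For (iii), ``increasing'': since $v=\min(S)$ at the start of the iteration and all of $W\setminus\{v\}\subseteq S$, we have $v<W_2<\cdots<W_{\#W}$, so $v$ is the minimal vertex of $T$ hence $root(T)=v$; moreover each $W_i$ is attached to a vertex in $L\subseteq V(T)$ with $L\ni u\Rightarrow u<W_i$, so along any root-path the labels strictly increase. For (iv): the roots are the successive values of $v=\min(S)$ over the \textbf{while} iterations, and since $S$ only shrinks, these minima are strictly increasing.

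The main obstacle — really the only nontrivial point — is verifying in (ii) that the set $L$ in the \textbf{for} loop is never empty, so that the edge $(L_{\#L-r+1},W_i)$ is well-defined and actually connects $W_i$ into the existing tree $T$. This needs the interval-graph hypothesis: one argues that if $W_i\in W$ then (unwinding $\mathrm{getW}$) there is some $u\in W$ with $u<W_i$, $(u,W_i)\in E$, and $\kappa(u)<\kappa(W_i)$, and that this $u$ was already added to $V(T)$ before step $i$ because $\mathrm{getW}$ processes $S$ in increasing order and $u<W_i$; hence $u\in L$. (In fact one should also double-check the index $\#L-r+1$ lies in $\{1,\dots,\#L\}$, which follows from $0\le r\le \#L$, with $r\ge 1$ guaranteed by the same $u$.) Everything else is bookkeeping, and I would present it as a short paragraph tracking the loop invariants "$V(T)$ is the set of already-processed elements of $W$" and "$T$ is an increasing tree with root $v$", updated at each pass of the \textbf{for} loop.
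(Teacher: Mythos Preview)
Your proof is essentially correct and follows the same approach as the paper's (which is considerably terser: the paper simply asserts that the \textbf{for} loop attaches $W_i$ via an edge $(u,W_i)\in E$ with $u<W_i$ because $L$ consists precisely of such elements, without spelling out why $L\neq\varnothing$ or why the index $\#L-r+1$ is valid). Your breakdown into (i)--(iv) and your verification that $1\le r\le \#L$ are nice additions.

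However, there is one inaccuracy worth correcting. You say ``this is where interval-ness enters'' and invoke Remark~\ref{rem:interval_clique}, but the proposition is stated and proved for an \emph{arbitrary} graph $G=([n],E)$, not just for interval graphs, and the interval hypothesis plays no role here. Your own second explanation is already the complete argument: if $W_i\in W$ with $i\ge 2$, then by the definition of $\mathrm{getW}$ there exists $u\in W$ with $u<W_i$, $(u,W_i)\in E$ and $\kappa(u)<\kappa(W_i)$; since $\mathrm{getW}$ processes $S$ in increasing order, at the moment $W_i$ was tested the set $W$ equalled $\{W_1,\dots,W_{i-1}\}$, so $u=W_j$ for some $j<i$ and hence $u\in V(T)$ at step $i$ of the \textbf{for} loop. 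Thus $u\in L$, giving $L\neq\varnothing$ (and also $r\ge 1$). The clique/perfect-elimination property is not used anywhere in this step; it only becomes relevant later in the paper (e.g.\ in Proposition~\ref{prop:Phi_Coinv_inverval}). So drop the reference to Remark~\ref{rem:interval_clique} and the phrase about interval-ness, and your proof is clean.
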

\begin{proof}
First of all notice that the function $\Phi_G$ is well defined: clearly the algorithm terminates, producing a forest with vertex set $[n]$. We need to check that the edges of $\Phi_G(\kappa)$ are in $G$, and finally that $\Phi_G(\kappa)$ is increasing.

Notice that the $j$-th iteration of the \textbf{while} loop produces the tree $T_j$ in $\Phi_G(\kappa)$. The call of the function $\mathrm{getW}$ in the $j$-th iteration of the \textbf{while} loop produces the vertex set $W=V(T_j)$ of $T_j$. The $j$-th iteration of the \textbf{while} loop adds the vertex $root(T_j)$ before the \textbf{for} loop, and then the \textbf{for} loop adds to $T_j$ the vertices of $W$ in increasing order, in such a way that $W_i$ gets added in $T_j$ with an edge $(u,W_i)\in E$ with $u<W_i$, since $L$ is precisely a set of elements of this type. All this shows that indeed $\Phi_G(\kappa)\in \mathsf{ISF}(G)$. 
\end{proof}

The first nontrivial property of the function $\Phi_G$ is its surjectivity. Indeed we will define a function  $f_G:\mathsf{ISF}(G)\to \mathfrak{S}_n\subset \mathsf{PC}(G)$ such that $\Phi_G \circ f_G(F)=F$ for every $F\in \mathsf{ISF}(G)$. 

We start by defining a function $\mathrm{colortree}(G,T,d)$ that takes as input a graph $G=([n],E)$, an increasing subtree $T$ of $G$ of size $k\leq n$, and an integer $d$, and it returns a permutation $\sigma$ of the numbers $\{d,d+1,\dots,d+k-1\}$: this function is defined in Algorithm~\ref{alg:PhiG_inverse_tree}. We will show that if $T$ is an increasing spanning tree of $G$, then indeed $\Phi_G(\mathrm{colortree}(G,T,d))=F:=(T)$.

\begin{algorithm}
	\caption{The algorithm defining $\mathrm{colortree}(G,T,d)$}\label{alg:PhiG_inverse_tree}
	\begin{algorithmic}
		\Require A graph $G=([n],E)$, an increasing subtree $T$ of $G$ of size $k\leq n$, and an integer $d$
		\Ensure $\sigma$ permutation of $\{d,d+1,\dots,d+k-1\}$ 
		\State $\sigma_1\gets d$
		\State $\sigma \gets (\sigma_1)$ \Comment{$\sigma$ will be a vector}
		\State $E(T) \gets ((a_1,b_1),(a_2,b_2),\ldots)$ edges of $T$ sorted by the $b_i$'s \Comment{$V(T)=\{root(T)<b_1<b_2<\cdots\}$}
		\For{$i$ from $1$ to $|E(T)|$} \Comment{$|E(T)|=|V(T)|-1=k-1$}
		\State  $L \gets \{u\in V(T)\mid u<b_i\text{ and }(u,b_i)\in E\}$ \Comment{$a_i\in L$}
		\State $r\gets \#\{u\in L\mid u> a_i\}$  
		\State $cL\gets (\sigma_{i_1},\sigma_{i_2},\cdots)$  where  $L=\{i_1<i_2<\cdots \}$
		\State $\sigma_{i+1} \gets \sigma_s+1$ where $s$ is such that $r=\#\{j \mid \sigma_{i_j}<\sigma_s\}$ 
		\For{$j\in [i]$}
		\If{$\sigma_j\geq \sigma_{i+1}$}
		\State $\sigma_j\gets \sigma_j+1$
		
		\EndIf
		\EndFor
		\State $\sigma \gets (\sigma_1,\sigma_2,\dots,\sigma_i,\sigma_{i+1})$ 
		\EndFor
	\end{algorithmic}
\end{algorithm}

Before discussing the algorithm, let us work out an example.
\begin{example}
Let $G=([4],\{(1,2),(1,3),(1,4),(2,3),(2,4)\})$, $T=([4],\{(1,2),(1,4),(2,3)\})$ and $d=2$. In Algorithm~\ref{alg:PhiG_inverse_tree} first of all we set $\sigma_1=2$, $\sigma=(\sigma_1)=(2)$, and $E(T)=((1,2),(2,3),(1,4))$, i.e.\ we get the edges of $T$ in increasing order with respect to the second coordinate (so $b_1=2$, $b_2=3$ and $b_3=4$, and $a_1=a_3=1$, $a_2=2$). Then in the first iteration of the first \textbf{for} loop ($i=1$) we get $L=\{1\}$, $r=0$, $cL=(2)$, $\sigma_2=\sigma_1+1=3$, and now in the internal \textbf{for} loop we do nothing ($\sigma_1<\sigma_2$), hence we finally update $\sigma$ to $\sigma=(\sigma_1,\sigma_2)=(2,3)$.

In the second iteration of the first \textbf{for} loop ($i=2$) we get $L=\{1,2\}$, $r=0$, $cL=(2,3)$, $\sigma_3=\sigma_1+1=3$, and now in the internal \textbf{for} loop we update $\sigma_2=4$ ($3=\sigma_2\geq \sigma_3=3$), hence we finally update $\sigma$ to $\sigma=(\sigma_1,\sigma_2,\sigma_3)=(2,4,3)$.

In the third and last iteration of the first \textbf{for} loop ($i=3$) we get $L=\{1,2\}$, $r=1$, $cL=(2,4)$, $\sigma_4=\sigma_2+1=5$, and now in the internal \textbf{for} loop we do nothing ($\sigma_k<\sigma_4$ for every $k\in [3]$), hence we finally update $\sigma$ to $\sigma=(\sigma_1,\sigma_2,\sigma_3,\sigma_4)=(2,4,3,5)$. It can be checked that $\Phi_G(\sigma)=F=(T)$.
\end{example}

\begin{proposition} \label{prop:PhiG_inverse_tree}
Algorithm~\ref{alg:PhiG_inverse_tree} defines a function $\mathrm{colortree}(G,T,d)$ that takes as input a graph $G=([n],E)$, an increasing subtree $T$ of $G$ of size $k\leq n$, and an integer $d$, and it returns a permutation $\sigma$ of the numbers $\{d,d+1,\dots,d+k-1\}$. Moreover, if $T$ is an increasing spanning tree of $G$, then $\Phi_G(\mathrm{colortree}(G,T,d))=F:=(T)$.
\end{proposition}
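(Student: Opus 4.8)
The plan is to prove the statement in two stages, mirroring its two assertions.

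\emph{Stage 1: well-definedness.} First I would verify, by induction on the counter $i$ of the outer \textbf{for} loop of Algorithm~\ref{alg:PhiG_inverse_tree}, that after the $i$-th iteration $\sigma=(\sigma_1,\dots,\sigma_{i+1})$ is a bijection onto the integer interval $\{d,d+1,\dots,d+i\}$. It is convenient to record once and for all the dictionary ``position $1\leftrightarrow root(T)$, position $j+1\leftrightarrow b_j$'', which makes sense because in an increasing tree each non-root vertex occurs exactly once as the larger endpoint of an edge, so the $b_j$'s listed in the algorithm are distinct and may be taken increasing. The base case is $\sigma=(d)$. For the inductive step the only point that needs checking is that the index $s$ exists and is unambiguous: since $a_i\in L=\{u\in V(T)\mid u<b_i,\ (u,b_i)\in E\}$ (because $(a_i,b_i)\in E(T)\subseteq E$ and $T$ is increasing), we get $0\le r=\#\{u\in L\mid u>a_i\}\le \#L-1=\#(cL)-1$, so $\sigma_s$ is simply the $(r+1)$-st smallest entry of the word $cL$. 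The instruction then inserts the value $\sigma_s+1$ and raises by $1$ every previously assigned value that is $\ge\sigma_s+1$; this is exactly ``insert a new element into a finite linear order of consecutive integers at a prescribed rank'', which carries a permutation of $\{d,\dots,d+i-1\}$ to a permutation of $\{d,\dots,d+i\}$. Taking $i=|E(T)|=k-1$ finishes this stage.

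\emph{Stage 2: the identity.} Now assume $T$ is an increasing spanning tree, so $k=n$ and $root(T)=\min V(T)=1$; set $\sigma:=\mathrm{colortree}(G,T,d)$ (the value of $d$ is irrelevant, since a shift does not change the standardization, hence not the image under $\Phi_G$). I would feed $\sigma$ into Algorithm~\ref{alg:main}: since $\sigma$ is injective it lies in $\mathsf{PC}(G)$, and the \textbf{while} loop starts with $S=[n]$ and $v=\min S=1=root(T)$. The engine for what follows is the \textbf{Key Lemma}: in the word $\sigma$, for every non-root vertex $m$ of $T$, writing $N^-(m):=\{u<m\mid(u,m)\in E\}$ and $a_m$ for the parent of $m$ in $T$, one has
\[\#\{u\in N^-(m)\mid\sigma(u)<\sigma(m)\}=1+\#\{u\in N^-(m)\mid u>a_m\}.\]
To prove it I would inspect the iteration $i$ of Algorithm~\ref{alg:PhiG_inverse_tree} with $b_i=m$: at that moment $L=N^-(m)$ (every smaller neighbour of $m$ is one of $root(T),b_1,\dots,b_{i-1}$, hence already placed), and $m$ is assigned the value $\sigma_s+1$ where $\sigma_s$ is the $(r+1)$-st smallest current value among $N^-(m)$ with $r=\#\{u\in N^-(m)\mid u>a_m\}$; hence immediately after this step exactly $r+1$ elements of $N^-(m)$ sit below $\sigma(m)$. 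Finally, every later iteration assigns a value $c'$ and raises by $1$ every current value $\ge c'$, an operation that preserves the relative order of any two already-placed values; since all of $N^-(m)\cup\{m\}$ are placed by iteration $i$, the count is unchanged. Granting the Key Lemma, the identity $\mathrm{getW}(G,1,[n]\setminus\{1\},\sigma)=[n]$ follows by induction on the vertices $w=2,3,\dots,n$ processed in increasing order: once $W=\{1,\dots,w-1\}$, the Key Lemma gives at least one $u\in N^-(w)\subseteq W$ with $\sigma(u)<\sigma(w)$, so $w$ is added. Hence the \textbf{while} loop runs only once and the inner \textbf{for} loop ranges over $W_i=i$, $i=2,\dots,n$; for such $i$ we have $L=N^-(i)$ and $r=\#\{u\in N^-(i)\mid\sigma(u)<\sigma(i)\}$, which by the Key Lemma equals $1+\#\{u\in N^-(i)\mid u>a_i\}$, so $\#L-r+1=\#\{u\in N^-(i)\mid u\le a_i\}$ is precisely the rank of $a_i$ among the increasingly listed elements of $L$, i.e.\ $L_{\#L-r+1}=a_i$. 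Thus $i$ is attached to its $T$-parent for every non-root $i$, the tree built is $T$, and the output is $(T)$.

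I expect the main obstacle to be the Key Lemma, and inside it the bookkeeping that a later insertion (which shifts up a final segment of the current values) never disturbs the relative order of two previously assigned values, together with the matching between the quantity $r$ of Algorithm~\ref{alg:PhiG_inverse_tree} and the quantity $r$ of Algorithm~\ref{alg:main}. A secondary point to pin down is that the \textbf{for} loop of Algorithm~\ref{alg:getW} processes the elements of $S$ in increasing order, which is what the induction in Stage 2 uses. Everything else — the two inductions and the rank arithmetic $\#L-r+1=\#\{u\in L\mid u\le a_m\}$ — is routine.
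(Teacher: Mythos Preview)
Your proof is correct and follows essentially the same approach as the paper's own argument: both hinge on the observation that the value of $r$ computed in Algorithm~\ref{alg:PhiG_inverse_tree} is exactly what makes the value of $r$ in Algorithm~\ref{alg:main} attach $b_i$ to its $T$-parent $a_i$, together with the fact that the shift-by-one operation in the inner loop preserves all relative orders among already-assigned colors. Your Key Lemma is a precise formulation of what the paper states informally in its second paragraph (``we made sure that the number of $u\in V(T)$ \dots\ is precisely the number that in Algorithm~\ref{alg:main} would lead to join the vertex $b_i$ to the vertex $a_i$''), and your explicit rank arithmetic $\#L-r+1=\#\{u\in L\mid u\le a_m\}$ fills in the step the paper leaves to the reader. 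Your remark that $\mathrm{getW}$ must process $S$ in increasing order is a genuine point the paper uses implicitly but never states.
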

\begin{proof}
It is easy to see that Algorithm~\ref{alg:PhiG_inverse_tree} is well defined and that it always terminates. Indeed, notice that since $T$ is an increasing tree, the edges $E(T)$ are such that the second coordinates are all distinct, and they give all the vertices of $T$ except $root(T)$ (each one appearing exactly once). It is also clear that the output is a permutation of $\{d,d+1,\dots,d+k-1\}$: we start with $\sigma=(d)$, and then we always add $\sigma_{i+1}$ by taking a $\sigma_s$ that we already have, augment it by $1$, and then in the inner \textbf{for} loop we increase by $1$ all the $\sigma_j$ that are $\geq \sigma_{i+1}$ for $j\in [i]$, hence at the end of the $i$-th iteration of the first \textbf{for} loop our $\sigma$ will necessarily contain the numbers $\{d,d+1,\dots,d+i\}$.

To see why $\Phi_G$ should send our $\sigma$ into the forest $F=(T)\in \mathsf{ISF}(G)$ containing the only increasing tree $T$, we observe that when we added $\sigma_{i+1}$ to our coloring (in the $i$-th iteration of the first \textbf{for} loop) we made sure that the number of $u\in V(T)$ smaller than the $(i+1)$-th vertex of $T$, i.e.\ $b_i$ in our notation, that are connected in $G$ and that they have a smaller corresponding color, i.e.\ value of $\sigma$, is precisely the number that in Algorithm~\ref{alg:main} would lead to join the vertex $b_i$ to the vertex $a_i$. The rescaling occurring in the inner \textbf{for} loop is only there to guarantee that we get a proper coloring, in fact a permutation of distinct colors, as it does not affect the relative order of the previous colors, hence it does not change the outcome of Algorithm~\ref{alg:main}.
\end{proof}

We are now in a position to define a function  $f_G:\mathsf{ISF}(G)\to \mathfrak{S}_n\subset \mathsf{PC}(G)$ such that $\Phi_G \circ f_G(F)=F$ for every $F\in \mathsf{ISF}(G)$. We will define $f_G$ via Algorithm~\ref{alg:PhiG_inverse}.

\begin{algorithm}
	\caption{The algorithm defining the section $f_G:\mathsf{ISF}(G)$ of $\Phi_G$}\label{alg:PhiG_inverse}
	\begin{algorithmic}
		\Require A graph $G=([n],E)$ and $F=(T_1,T_2,\dots,T_k)\in \mathsf{ISF}(G)$
		\Ensure $\sigma\in \mathfrak{S}_n$ \Comment{It will be $\Phi_G(\sigma)=F$}
		\State $clrT\gets ()$ \Comment{empty vector}
		\For{$i$ from $1$ to $k$}
		\State $d\gets n+1-\sum_{j=1}^{i-1}|V(T_j)|$
		\State $clrT_i\gets \mathrm{colortree}(G,T_i,d)$
		\State Append $clrT_i$ to the right of $clrT$ 
		\EndFor
		\State $\sigma\gets ()$ \Comment{empty vector}
		\For{$i$ from $1$ to $n$}
		\For{$j$ from $1$ to $k$}
		\If{$i\in V(T_j)$}
		\State $r=\#\{k\in V(T_j)\mid k\leq i\}$
		\State Append $c_r$ to the right of $\sigma$, where $clrT_j=\{c_1<c_2<\cdots \}$
		\EndIf
		\EndFor
		\EndFor
	\end{algorithmic}
\end{algorithm}

\begin{theorem} \label{thm:PhiG_surjective}
	Let $G=([n],E)$ a graph. Then Algorithm~\ref{alg:PhiG_inverse} defines a function $f_G:\mathsf{ISF}(G)\to \mathfrak{S}_n\subset \mathsf{PC}(G)$ such that $\Phi_G \circ f_G(F)=F$ for every $F\in \mathsf{ISF}(G)$. In particular $\Phi_G$ is surjective, $f_G$ is injective, and $\mathsf{ISF}(G)=\{\Phi_G(\sigma)\mid \sigma\in \mathfrak{S}_n\}=\{\Phi_G(\sigma)\mid \sigma\in f_G(\mathsf{ISF}(G))\}$.
\end{theorem}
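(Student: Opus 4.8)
The plan is to treat the two claims separately: first that $f_G$ is well defined, i.e.\ that Algorithm~\ref{alg:PhiG_inverse} genuinely outputs a permutation in $\mathfrak S_n$, and then the identity $\Phi_G(f_G(F))=F$. Everything else in the statement (surjectivity of $\Phi_G$, injectivity of $f_G$, the two descriptions of $\mathsf{ISF}(G)$) is then purely formal.

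For well-definedness, I would argue that by Proposition~\ref{prop:PhiG_inverse_tree} each call $\mathrm{colortree}(G,T_i,d_i)$ in the first \textbf{for} loop is a permutation of the interval $B_i:=\{d_i,\dots,d_i+|V(T_i)|-1\}$, and that the $d_i$ are chosen precisely so that $B_1,\dots,B_k$ are consecutive blocks partitioning $[n]$, with $B_1$ the block of the $|V(T_1)|$ largest colors, $B_2$ the next block down, and so on (this uses only $\sum_i|V(T_i)|=n$). Unwinding the second \textbf{for} loop, one checks that if $\sigma:=f_G(F)$ is viewed as a coloring then $\sigma|_{V(T_j)}$ is exactly the coloring $\mathrm{colortree}(G,T_j,d_j)$ of $V(T_j)$: the vertex $i$, having rank $r$ in $V(T_j)$, receives the color that $\mathrm{colortree}$ assigned to the $r$-th smallest vertex of $T_j$, namely to $i$ itself. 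Since the $B_j$ are disjoint and cover $[n]$, $\sigma$ is a bijection $[n]\to[n]$, hence $\sigma\in\mathfrak S_n\subseteq\mathsf{PC}(G)$.

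For the identity I would set $\sigma:=f_G(F)$, run Algorithm~\ref{alg:main} on $(G,\sigma)$, and prove by induction on $j$ the invariant: after the $j$-th pass of the \textbf{while} loop, the output list equals $(T_1,\dots,T_j)$ and $S=V(T_{j+1})\cup\cdots\cup V(T_k)$. Granting the invariant for $j$, in pass $j+1$ we get $v=\min S=root(T_{j+1})$, since the roots of $F$ increase and $root(T_m)=\min V(T_m)$. The key point is that $W$ never leaves $V(T_{j+1})$ during the call $\mathrm{getW}(G,v,S,\sigma)$: processing the candidates in increasing order, a candidate $w\in V(T_m)$ with $m\ge j+2$ has $\sigma(w)\in B_m$, which lies entirely below $B_{j+1}$, so $\sigma(u)>\sigma(w)$ for every $u\in W\subseteq V(T_{j+1})$ and $w$ is not added. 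Consequently the final set $W$, and the tree built in pass $j+1$, depend only on $\sigma|_{V(T_{j+1})}$ and on the edges of $G$ within $V(T_{j+1})$, so they coincide with those produced by the unique \textbf{while}-pass of $\Phi_{G_{j+1}}(\sigma|_{V(T_{j+1})})$, where $G_{j+1}:=G[V(T_{j+1})]$ is the induced subgraph with the inherited order. Since $T_{j+1}$ is an increasing spanning tree of $G_{j+1}$ and $\sigma|_{V(T_{j+1})}=\mathrm{colortree}(G,T_{j+1},d_{j+1})=\mathrm{colortree}(G_{j+1},T_{j+1},d_{j+1})$ (the algorithm $\mathrm{colortree}$ only inspects edges joining vertices of the tree), Proposition~\ref{prop:PhiG_inverse_tree} gives that $\mathrm{getW}$ returns all of $V(T_{j+1})$, that pass $j+1$ outputs exactly $T_{j+1}$, and that its vertices are removed from $S$ — which is the invariant for $j+1$. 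After $k$ passes $S=\varnothing$ and the loop stops, whence $\Phi_G(\sigma)=(T_1,\dots,T_k)=F$.

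I expect the main obstacle to be making the comparison ``pass $j+1$ of $\Phi_G(\sigma)$ equals the \textbf{while}-pass of $\Phi_{G_{j+1}}(\sigma|_{V(T_{j+1})})$'' fully rigorous: one must verify that the extra vertices of $V(T_{j+2})\cup\cdots$ still sitting in $S$, although scanned by $\mathrm{getW}$ and available to the \textbf{for} loop of Algorithm~\ref{alg:main}, neither enter $W$ nor change which vertices of $V(T_{j+1})$ do, and that the parent-choice step of the \textbf{for} loop then reconstructs $E(T_{j+1})$ — which is precisely the content of (the proof of) Proposition~\ref{prop:PhiG_inverse_tree} applied verbatim to the induced subgraph $G_{j+1}$. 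The ``top block'' separation of the color blocks $B_i$ is exactly what makes this reduction legitimate; the remaining verifications are careful but routine.
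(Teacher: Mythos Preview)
Your proof is correct and follows essentially the same approach as the paper's: both use the color-block separation (earlier trees receive strictly larger colors than later ones) to show that each \textbf{while}-pass of $\Phi_G$ picks up exactly the vertices of one tree, then invoke Proposition~\ref{prop:PhiG_inverse_tree} to conclude that the tree is reconstructed correctly. Your version is more detailed---making the induction explicit and carefully justifying the reduction to the induced subgraph $G_{j+1}$---but the underlying argument is identical in substance.
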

\begin{proof}
It is easy to check that Algorithm~\ref{alg:PhiG_inverse} is well defined and that it always terminates. Indeed the first \textbf{for} loop is just a sequence of iterations of Algorithm~\ref{alg:PhiG_inverse_tree} on the trees $T_1,T_2,\dots$ of $F\in \mathsf{ISF}(G)$, in such a way that the tree $T_i$ gets the colors $\{n-\sum_{j=1}^{i}|V(T_j)|+1,n-\sum_{j=1}^{i}|V(T_j)|+2,\dots,n-\sum_{j=1}^{i-1}|V(T_j)|\}$. Then the second \textbf{for} loop takes the colorings computed before and put it together to get a coloring of the forest $F$, getting obviously a permutation $\sigma\in \mathfrak{S}_n$.

To see why $\Phi_G \circ f_G(F)=F$, notice that, because of how we attributed the colors to the trees,  the first call of $\mathrm{getW}$ (Algorithm~\ref{alg:getW}) in Algorithm~\ref{alg:main} will necessarily pick the vertices of $T_1$ (the colors of the vertices of the other trees are all strictly smaller than all these colors), hence it will send the colorings of those vertices back to the tree $T_1$ (by Proposition~\ref{prop:PhiG_inverse_tree}); then the second call of $\mathrm{getW}$ will necessarily pick the vertices of $T_2$ (the colors of the vertices of the other remaining trees are all strictly smaller than all these colors), hence it will send the colorings of those vertices back to the tree $T_2$ (by Proposition~\ref{prop:PhiG_inverse_tree}); and so on.
\end{proof}

Our function $f_G$ has also the following property.
\begin{proposition}
	For every graph $G=([n],E)$ the function $f_G:\mathsf{ISF}(G)\to \mathfrak{S}_n$ defined by Algorithm~\ref{alg:PhiG_inverse} is such that $\mathsf{Des}_G(f_G(F)^{-1})=[n-1]$ for every $F\in \mathsf{ISF}(G)$.
\end{proposition}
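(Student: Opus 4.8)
The plan is to make $\sigma:=f_G(F)$ (the permutation produced by Algorithm~\ref{alg:PhiG_inverse}) explicit enough to verify $\mathsf{Des}_G(\sigma^{-1})=[n-1]$ color by color. Writing $\tau=\sigma^{-1}$, that identity unwinds to: \emph{for every $i\in[n-1]$, if the vertex $v:=\sigma^{-1}(i)$ is smaller than the vertex $w:=\sigma^{-1}(i+1)$, then $\{v,w\}\in E$}. Write $F=(T_1,\dots,T_k)$. From (the proof of) Theorem~\ref{thm:PhiG_surjective} I extract the relevant structure of $\sigma$: the colors used on $V(T_j)$ form a block of consecutive integers $B_j$; the blocks $B_1,\dots,B_k$ partition $[n]$ and are stacked so that $B_1$ carries the largest colors, $B_2$ the next, and so on ($\max B_{j}=\min B_{j-1}-1$); and the coloring of $V(T_j)$ induced by $\sigma$ coincides with $\mathrm{colortree}(G,T_j,\min B_j)$. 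After fixing $i$ and assuming $v<w$, there are two cases: $v,w$ lie in the same tree $T_j$, or in different trees.

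The heart of the argument is a lemma about $\mathrm{colortree}$: \emph{for any increasing subtree $T$ of $G$ and any integer $d$, the coloring $\tau:=\mathrm{colortree}(G,T,d)$ satisfies (a) $\tau(root(T))=d$ (the root gets the smallest color of the block), and (b) whenever $u,u'\in V(T)$ with $u<u'$ and $\tau(u')=\tau(u)+1$, one has $\{u,u'\}\in E$.} I would prove this by induction on $|V(T)|$, following the algorithm. Let $b:=\max V(T)$; since $T$ is increasing, $b$ is a leaf, so $T':=T-b$ is an increasing subtree, after the first $|E(T')|$ iterations the working vector of $\mathrm{colortree}(G,T,d)$ equals $\mathrm{colortree}(G,T',d)$, and the final iteration assigns $b$ a color $c$ that is $1$ more than a color already present (namely $c=\sigma_s+1$ in the notation of Algorithm~\ref{alg:PhiG_inverse_tree}) and then increments by $1$ every earlier color $\ge c$. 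Part (a) follows because $c>d$ by the inductive part (a) for $T'$, so the final shift fixes $d$ and the root. For part (b): every $\tau$-consecutive pair contained in $V(T')$ is still $\mathrm{colortree}(G,T',d)$-consecutive (inserting the value $c$ can only break a consecutive pair, never create a new one, among vertices of $V(T')$), so those are covered by induction; and the only pair satisfying the hypothesis of (b) with $b$ as the larger label is $\{x,b\}$ with $\tau(x)=c-1$. Here one reads off Algorithm~\ref{alg:PhiG_inverse_tree} that $c-1=\sigma_s$ is the $(r+1)$-st smallest color among the vertices of $L=\{u\in V(T)\mid u<b,\ (u,b)\in E\}$ — this is legitimate since the parent $a$ of $b$ in $T$ belongs to $L$ and fails $u>a$, so $r+1\le\#L$ — hence $x\in L$, i.e.\ $\{x,b\}\in E$. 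No pair has $b$ as the smaller label, because $b=\max V(T)$; this closes the induction.

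Finally I would combine the two cases. In the different-trees case, say $v\in V(T_a)$ and $w\in V(T_{a'})$ with $a\ne a'$: as $i,i+1$ are consecutive integers lying in distinct blocks and $B_1$ is on top, necessarily $i=\max B_a$, $i+1=\min B_{a-1}$ and $a'=a-1$; then by part (a), $w=\sigma^{-1}(\min B_{a-1})=root(T_{a-1})$, so $w=root(T_{a-1})<root(T_a)\le v$ because $F$ lists the trees in increasing order of their roots, contradicting $v<w$ — so this case never occurs (the descent condition already holds). In the same-tree case $v,w\in V(T_j)$ with $\sigma(w)=\sigma(v)+1$ and $v<w$, part (b) applied to $T=T_j$ with $d=\min B_j$ gives $\{v,w\}\in E$. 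Hence $\mathsf{Des}_G(\sigma^{-1})=[n-1]$.

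The main obstacle is part (b) of the lemma — pinning down the vertex $x$ that ends up with the color immediately below $c$ after the insertion-and-shift step and showing it is forced to be adjacent to $b$. This leans essentially on the design of Algorithm~\ref{alg:PhiG_inverse_tree}, namely that the color assigned to $b$ is chosen to be one more than a color currently borne by a neighbour of $b$ inside $V(T)$; everything else is either an immediate consequence of part (a) or routine bookkeeping about how $\mathrm{colortree}$ shifts colors.
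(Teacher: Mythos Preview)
Your proof is correct and follows essentially the same approach as the paper's. Both arguments split into the same two cases (consecutive colors in the same tree versus in different trees), handle the different-trees case identically via part (a) of your lemma (the root receives the smallest color of its block, forcing $\sigma^{-1}(i+1)<\sigma^{-1}(i)$), and reduce the same-tree case to the observation that the new color assigned in $\mathrm{colortree}$ is one more than a color currently held by a vertex of $L$. The only difference is packaging: you formalize the same-tree case as an inductive lemma about $\mathrm{colortree}$ (noting that later insertions can break but never create consecutive pairs among previously placed vertices), while the paper argues the same point directly by tracking the colors of $\sigma^{-1}(i)$ and $\sigma^{-1}(i+1)$ through the algorithm and observing that subsequent shifts increment them simultaneously.
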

\begin{proof}
	Suppose by contradiction that there exists $G=([n],E)$, $F\in \mathsf{ISF}(G)$ and $i\in [n-1]\setminus \mathsf{Des}_G(f_G(F)^{-1})$. For simplicity, we set $\sigma:=f_G(F)$, so that we must have $\sigma^{-1}(i)<\sigma^{-1}(i+1)$ and $\{\sigma^{-1}(i),\sigma^{-1}(i+1)\}\notin E$. 
	
	We use the fact proved in Theorem~\ref{thm:PhiG_surjective} that $\Phi_G(f_G(F))=F$. We distinguish two cases.
	
	Case 1: both $\sigma^{-1}(i)$ and $\sigma^{-1}(i+1)$ are in the same tree $T$ of $F$. Then Algorithm~\ref{alg:PhiG_inverse_tree} tells us how $\Phi_G(f_G(F))=F$ could have happened: in the $\sigma^{-1}(i+1)$-th iteration of the first \textbf{for} loop there was a color $a\leq i$ in position $\sigma^{-1}(i)$, and in that iteration we added $a+1$ in position $\sigma^{-1}(i+1)$; later on we might have updated those colors by adding $1$ to both of them simultaneously. But the problem is that to add $a+1$ the color $a$ should have been in the set $cL$ (see Algorithm~\ref{alg:PhiG_inverse_tree}), hence the vertex $\sigma^{-1}(i)$ should have been in $L$, but this is impossible since $\{\sigma^{-1}(i),\sigma^{-1}(i+1)\}\notin E$.
	
	Case 2: $\sigma^{-1}(i)$ and $\sigma^{-1}(i+1)$ occur in distinct trees of $F$, say $T$ and $T'$ respectively. Since $\Phi_G(f_G(F))=F$ and Algorithm~\ref{alg:PhiG_inverse} gives higher colors to trees that are to the left in $F$, we must have that $T'$ is to the left of $T$, i.e.\ $root(T')<root(T)$. Moreover $i+1$ must be the smallest color occurring in $T'$, since the colors of every tree of $F$ are consecutive (cf.\ Proposition~\ref{prop:PhiG_inverse_tree}), and the smallest color always gets assigned to the root $root(T')$ of the tree (cf.\ Algorithm~\ref{alg:getW}). So $\sigma^{-1}(i+1)=root(T')<root(T)$, and $root(T)$ is smaller than any other vertex of $T$, hence it is also smaller than $\sigma^{-1}(i)$, giving a contradiction.
\end{proof}

The function $\Phi_G$ has also the following important property.

\begin{proposition} \label{prop:Phi_wt_coinv}
Given a graph $G=([n],E)$, the function $\Phi_G:\mathsf{PC}(G)\to \mathsf{ISF}(G)$ defined by Algorithm~\ref{alg:main} is such that $\mathsf{wt}_G(\Phi_G(\kappa))\geq \mathsf{coinv}_G(\kappa)$ for every $\kappa\in \mathsf{PC}(G)$.
\end{proposition}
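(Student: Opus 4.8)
The plan is to split the set $\mathsf{CoInv}_G(\kappa)$ of $G$-coinversions of $\kappa$ according to the trees of $F:=\Phi_G(\kappa)=(T_1,\dots,T_k)$: a coinversion $(i,j)$ of $\kappa$ either has $i$ and $j$ in the same tree $T_\ell$, or has $i$ and $j$ in two distinct trees. I will show that coinversions of the first kind are counted \emph{exactly} by $\sum_\ell \mathsf{wt}_G(T_\ell)$, and that coinversions of the second kind inject into the set of $G$-inversions of $F$. Since $\mathsf{wt}_G(F)=\#\{G\text{-inversions of }F\}+\sum_\ell \mathsf{wt}_G(T_\ell)$ by definition, adding the two statements gives $\mathsf{coinv}_G(\kappa)\le \mathsf{wt}_G(F)$.

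For the first kind, fix a tree $T=T_\ell$ with vertex set $W=\{W_1<W_2<\cdots<W_{\#W}\}$ (so $W_1=root(T)$), produced by one pass of the \textbf{while} loop of Algorithm~\ref{alg:main}. When $W_i$ ($i\ge 2$) is handled in the \textbf{for} loop, the current vertex set of $T$ is $\{W_1,\dots,W_{i-1}\}$, and because the $W$'s increase this equals $\{w\in V(T)\mid w<W_i\}$; hence $L=\{w\in V(T)\mid w<W_i,\ (w,W_i)\in E\}$. By its definition $r=\#\{w\in L\mid \kappa(w)<\kappa(W_i)\}$, which is exactly the number of $G$-coinversions $(w,W_i)$ of $\kappa$ with $w\in V(T)$. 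On the other hand the edge attached to $W_i$ is $(L_{\#L-r+1},W_i)$, so the elements of $L$ that are $\ge L_{\#L-r+1}$ are $L_{\#L-r+1},\dots,L_{\#L}$, that is $r$ of them; thus $\mathsf{wt}_G((L_{\#L-r+1},W_i))=\#\{w\in V(T)\mid L_{\#L-r+1}\le w<W_i,\ (w,W_i)\in E\}=r$. Summing over $i=2,\dots,\#W$ (both sides are $0$ for the root) gives $\mathsf{wt}_G(T_\ell)=\#\{(i,j)\in \mathsf{CoInv}_G(\kappa)\mid i,j\in V(T_\ell)\}$.

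For the second kind, let $(i,j)\in \mathsf{CoInv}_G(\kappa)$, so $i<j$, $\{i,j\}\in E$, $\kappa(i)<\kappa(j)$, with $i\in V(T_a)$ and $j\in V(T_b)$, $a\neq b$. I claim $a>b$; then $(i,j)$ is by definition a $G$-inversion of $F$, and $(i,j)\mapsto(i,j)$ is the required injection. To prove the claim, suppose $a<b$. At the start of the $a$-th iteration of the \textbf{while} loop both $i$ and $j$ are still in $S$, so both are examined in the ensuing call of $\mathrm{getW}$ (Algorithm~\ref{alg:getW}) with $v=root(T_a)$. Since $\mathrm{getW}$ scans $S$ in increasing order, $W$ only grows, and $i$ belongs to the output $W=V(T_a)$, the vertex $i$ is in $W$ by the time $j$ is examined; then $u=i$ witnesses $\{u\in W\mid u<j,\ (u,j)\in E,\ \kappa(u)<\kappa(j)\}\neq\varnothing$, so $j$ is added to $W=V(T_a)$, contradicting $j\in V(T_b)$ with $b\neq a$.

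Finally, every $G$-coinversion of $\kappa$ lies inside a single tree of $F$ or across two distinct trees, so combining the two parts,
\begin{align*}
\mathsf{coinv}_G(\kappa)&=\sum_\ell \#\{(i,j)\in \mathsf{CoInv}_G(\kappa)\mid i,j\in V(T_\ell)\}+\#\{(i,j)\in \mathsf{CoInv}_G(\kappa)\mid i,j\text{ in different trees of }F\}\\
&\le \sum_\ell \mathsf{wt}_G(T_\ell)+\#\{G\text{-inversions of }F\}=\mathsf{wt}_G(F).
\end{align*}
The one genuinely delicate point I expect is the bookkeeping in the first part: one must read the single quantity $r$ of Algorithm~\ref{alg:main} at once as the number of $G$-coinversions pointing into $W_i$ from the portion of the tree already constructed and as the weight $\mathsf{wt}_G$ of the edge attaching $W_i$, and for the latter it is essential that ``the portion of $T$ below $W_i$ already built'' is all of $\{w\in V(T)\mid w<W_i\}$, which holds exactly because vertices enter $T$ in increasing order. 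A minor point is that the second part uses that $\mathrm{getW}$ scans $S$ in increasing order, which is the intended reading of Algorithm~\ref{alg:getW}.
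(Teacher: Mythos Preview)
Your proof is correct and follows essentially the same approach as the paper's own proof: split the $G$-coinversions of $\kappa$ into intra-tree and inter-tree ones, observe that the quantity $r$ in Algorithm~\ref{alg:main} is simultaneously the number of intra-tree coinversions pointing into $W_i$ and the weight of the edge attaching $W_i$, and then argue that every inter-tree coinversion is a $G$-inversion of $F$ because otherwise $\mathrm{getW}$ would have absorbed $j$ into the tree of $i$. Your write-up is in fact more detailed than the paper's (which compresses each of the two parts into a single sentence), and your explicit acknowledgment that the second part relies on $\mathrm{getW}$ scanning $S$ in increasing order is well placed.
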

\begin{proof}
Notice that in the \textbf{for} loop we added $W_i$ with an edge $(u,W_i)$ in such a way that the weight $\mathsf{wt}_G((u,W_i))$ equals the number of $G$-coinversions involving $W_i$ and the elements in $W$ smaller than $W_i$. In this way the number of $G$-coinversions of $\kappa$ involving two vertices of the same tree $T_j$ equals $\mathsf{wt}_G(T_j)$. The other $G$-coinversions of $\kappa$ are automatically $G$-inversions of $\Phi_G(\kappa)$: if $u<w$, $(u,w)\in E$, $\kappa(u)<\kappa(w)$, $u\in T_j$ and $w\in T_s$ with $j\neq s$, then necessarily $j>s$, in other words $w$ gets added before $u$ in Algorithm~\ref{alg:main} (if we add $u$ first, then Algorithm~\ref{alg:getW} would put $w$ in the same tree as $u$). This shows that $\mathsf{wt}_G(\Phi_G(\kappa))\geq \mathsf{coinv}_G(\kappa)$.
\end{proof}
\begin{remark} \label{rmk:missing_equality}
In the above proof of Proposition~\ref{prop:Phi_wt_coinv}, the only thing that is missing to get the equality $\mathsf{wt}_G(\Phi_G(\kappa))= \mathsf{coinv}_G(\kappa)$ is the fact that every $G$-inversion of $\Phi_G(\kappa)$ corresponds to a $G$-coinversion of $\kappa$. In fact this is not always the case: for example for the graph $G=([3],\{(1,3),(2,3)\}$ (which is not in $\mathcal{IG}_3$) we have $\Phi_G(132)=((\{1,3\},\{(1,3)\}),(\{2\},\varnothing))$ and $\mathsf{CoInv}_G(132)=\{(1,3)\}$, so that $\mathsf{wt}_G(\Phi_G(132))=2>1=\mathsf{coinv}_G(132)$. 
\end{remark}

\begin{remark} \label{rmk:CoInvG_determines_Phi}
An important observation is that $\Phi_G(\kappa)$ only depends on $\mathsf{CoInv}_G(\kappa)$: in other words, if $\kappa,\kappa'\in \mathsf{PC}(G)$ are such that $\mathsf{CoInv}_G(\kappa)=\mathsf{CoInv}_G(\kappa')$, then $\Phi_G(\kappa)=\Phi_G(\kappa')$. Indeed, in Algorithm~\ref{alg:main} $\kappa$ only enters in two places: in the call of $\mathrm{getW}$, and it is immediate to see that Algorithm~\ref{alg:getW} is determined by the $G$-coinversions of $\kappa$ (cf.\ the condition of the \textbf{if}), and in the computation of $r$ in the \textbf{for} loop, and again it is clear that it only depends on the $G$-coinversions of $\kappa$.
\end{remark}

In the special case of $G=([n],E)\in \mathcal{IG}_n$ an interval graph, the statement in Remark~\ref{rmk:CoInvG_determines_Phi} has a converse. In fact we have even more.
\begin{proposition} \label{prop:Phi_Coinv_inverval}
Given an interval graph $G=([n],E)\in \mathcal{IG}_n$, the function $\Phi_G:\mathsf{PC}(G)\to \mathsf{ISF}(G)$ defined by Algorithm~\ref{alg:main} is such that for every $\kappa,\kappa'\in \mathsf{PC}(G)$, $\Phi_G(\kappa)=\Phi_G(\kappa')$ if and only if $\mathsf{CoInv}_G(\kappa)=\mathsf{CoInv}_G(\kappa')$. Moreover $\mathsf{wt}_G(\Phi_G(\kappa))= \mathsf{coinv}_G(\kappa)$ for every $\kappa\in \mathsf{PC}(G)$.
\end{proposition}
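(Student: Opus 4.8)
The ``if'' part of the equivalence is exactly Remark~\ref{rmk:CoInvG_determines_Phi} and holds for an arbitrary graph, so everything interval-specific is concentrated in two facts: that $\Phi_G$ separates distinct $\mathsf{CoInv}_G$-classes, and that $\mathsf{wt}_G(\Phi_G(\kappa))=\mathsf{coinv}_G(\kappa)$. The plan is to deduce both from one lemma: \emph{for an interval graph $G$ and $\kappa\in\mathsf{PC}(G)$, every $G$-inversion of $F:=\Phi_G(\kappa)$ is a $G$-coinversion of $\kappa$.} Given this lemma, the weight identity is immediate from Proposition~\ref{prop:Phi_wt_coinv} together with Remark~\ref{rmk:missing_equality}, since the lemma is precisely the ingredient identified there as the only thing missing for equality.

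To prove the lemma I would argue by contradiction, tracking Algorithm~\ref{alg:main}. Suppose $(u,v)$ is a $G$-inversion of $F=(T_1,\dots,T_k)$: $u<v$, $(u,v)\in E$, $u\in V(T_i)$, $v\in V(T_j)$, $i>j$, yet $\kappa(u)>\kappa(v)$. Consider the pass of the \textbf{while} loop that builds $T_j$; at its start the current set $S$ still contains $V(T_j)\cup V(T_{j+1})\cup\cdots$, hence both $u$ and $v$, and one checks $root(T_j)<u<v$, so in the call of $\mathrm{getW}$ producing $W=V(T_j)$ the vertex $u$ is examined strictly after the root and strictly before $v$, is rejected (because $u\notin V(T_j)$), while $v$ is accepted. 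Starting from $y_0:=v$, which lies in $V(T_j)$, is not its root, and satisfies $\kappa(y_0)=\kappa(v)<\kappa(u)$, I pass repeatedly to the witness $y_{\ell+1}<y_\ell$ that caused $y_\ell$ to be accepted into $W$: thus $(y_{\ell+1},y_\ell)\in E$, $\kappa(y_{\ell+1})<\kappa(y_\ell)$, and $y_{\ell+1}\in V(T_j)$. As long as $y_{\ell+1}>u$ the descent continues; since $y_0>y_1>\cdots$ all lie in the finite set $V(T_j)$, some $y_{\ell+1}<u$ eventually. Then $y_{\ell+1}<u<y_\ell$ together with $(y_{\ell+1},y_\ell)\in E$ forces $(y_{\ell+1},u)\in E$ by the \emph{interval} property of $G$; and $y_{\ell+1}\in V(T_j)$ with $y_{\ell+1}<u$ means $y_{\ell+1}$ was already a member of $W$ when $u$ was examined (a vertex can enter $W$ only at initialisation or when it is examined, so membership of $W$ is monotone in time), so the rejection of $u$ forces $\kappa(y_{\ell+1})>\kappa(u)$ — contradicting $\kappa(y_{\ell+1})<\kappa(y_\ell)<\cdots<\kappa(y_0)=\kappa(v)<\kappa(u)$. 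This is the step that uses the interval hypothesis, and by Remark~\ref{rmk:missing_equality} it is exactly where some hypothesis must be invoked.

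It remains to prove the ``only if'' direction, for which I would show that $\mathsf{CoInv}_G(\kappa)$ is entirely recoverable from $F=\Phi_G(\kappa)$. An edge $(a,b)\in E$ with $a<b$ whose endpoints lie in different trees of $F$ is a $G$-coinversion of $\kappa$ if and only if it is a $G$-inversion of $F$: ``only if'' is (the proof of) Proposition~\ref{prop:Phi_wt_coinv}, which shows that a cross-tree $G$-coinversion has its smaller endpoint in the later tree, and ``if'' is the lemma above; either way this is visible in $F$. For an edge with both endpoints in one tree $T_j=(W_1<\cdots<W_m)$ I would induct on $i$: assuming the $G$-coinversion status of every edge of $G$ between two vertices of $V(T_j)$ that are $<W_i$ has already been recovered, observe that the set $L:=\{W_t:t<i,\ (W_t,W_i)\in E\}$ of in-tree lower neighbours of $W_i$ is a clique by Remark~\ref{rem:interval_clique}; since $\kappa$ is proper it is injective on $L$, so the (already known) pairwise $G$-coinversions inside $L$ determine the full $\kappa$-order of $L$. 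On the other hand $F$ records the parent of $W_i$, which by construction is the $r$-th largest element of $L$, where $r=\#\{W_t\in L:\kappa(W_t)<\kappa(W_i)\}$; hence the $W_t\in L$ with $\kappa(W_t)<\kappa(W_i)$ are precisely the $r$ elements of $L$ of smallest $\kappa$-value, which we can now identify. Running this over all trees recovers $\mathsf{CoInv}_G(\kappa)$ from $F$, so $\Phi_G(\kappa)=\Phi_G(\kappa')$ implies $\mathsf{CoInv}_G(\kappa)=\mathsf{CoInv}_G(\kappa')$.

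The main obstacle is the descent argument for the lemma: keeping the bookkeeping of $\mathrm{getW}$ correct and locating the single place where the interval property enters. The clique-and-induction step in the ``only if'' direction is comparatively routine once Remark~\ref{rem:interval_clique} is available, and the cross-tree bookkeeping just recycles the proof of Proposition~\ref{prop:Phi_wt_coinv}.
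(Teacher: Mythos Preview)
Your proposal is correct and follows essentially the same approach as the paper. The paper likewise proves the ``only if'' direction by showing that $\mathsf{CoInv}_G(\kappa)$ can be reconstructed from $F=\Phi_G(\kappa)$, splitting into the in-tree case (induction via the clique property of Remark~\ref{rem:interval_clique}) and the two cross-tree cases; your lemma corresponds exactly to the paper's case (ii), and your descent $y_0>y_1>\cdots$ is the paper's ascending chain $root(T_j)=u_1<\cdots<u_r$ run backwards, with the interval property invoked at the crossing point in both arguments. The only organisational difference is that you isolate the lemma first and then cite it for both the weight identity and the cross-tree reconstruction, whereas the paper establishes the three reconstruction cases and derives the weight identity at the end via Remark~\ref{rmk:missing_equality}; the content is the same.
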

\begin{proof}
For the first statement, because of Remark~\ref{rmk:CoInvG_determines_Phi}, it is enough to show that we can reconstruct $\mathsf{CoInv}_G(\kappa)$ from the knowledge of $G$ and $\Phi_G(\kappa)$.

(i) Inside each tree of $\Phi_G(\kappa)$ we can reconstruct the $G$-coinversions inductively. Indeed, when we add a vertex in Algorithm~\ref{alg:main}, we are in the $i$-th instance of a \textbf{for} loop, and we are adding $W_i$ to the tree $T$; the vertices in $T$ that are connected to $W_i$ are the vertices in $L$ (they are all smaller than $W_i$), and since $G$ is an interval graph they form a clique (cf.\ Remark~\ref{rmk:CoInvG_determines_Phi}); from the vertex which $W_i$ is attached so we can reconstruct $r$, so that we can determine how many $u\in L$ are such that $\kappa(u)<\kappa(W_i)$; since by induction we know the order of $\{\kappa(u)\mid u\in L\}$, we determine in this way the order of $\{\kappa(u)\mid u\in L\cup\{W_i\}\}$.

(ii) Between trees, assume that $u$ is in a tree $T$ strictly to the left of the tree $T'$ containing $v$, $u>v$ and $(v,u)\in E$. We already observed in the proof of Proposition~\ref{prop:PhiG_welldefined} that the vertices of $T$ are determined by the call of the function $\mathrm{getW}$, hence (see Algorithm~\ref{alg:getW}), since $u\in T$, there exists a sequence $root(T)=u_1<u_2<\cdots<u_r=u$ of vertices of $T$ such that $(u_i,u_{i+1})\in E$ and $\kappa(u_i)<\kappa(u_{i+1})$ for every $i\in [r-1]$. Since $root(T)<root(T')$, $root(T')$ is smaller than every other vertex of $T'$ and $v$ is one of those vertices, there exists a $j\in [r-1]$ such that $u_j<v<u_{j+1}$. Since $G$ is an interval graph, $(v,u=u_r)\in E$ and $v<u_{j+1}<u$, we have $(v,u_{j+1})\in E$. Again, since $G$ is an interval graph, $(u_j,u_{j+1})\in E$ and $u_j<v<u_{j+1}$, we have $(u_j,v)\in E$. So if $\kappa(u)<\kappa(v)$, we would have \[\kappa(u_j)<\kappa(u_{j+1})<\cdots <\kappa(u_r)=\kappa(u)<\kappa(v),\]
and Algorithm~\ref{alg:main} would have placed $v$ in $T$, which is a contradiction; hence we must have $\kappa(u)>\kappa(v)$.

(iii) Finally, if $u$ is in a tree $T$ strictly to the left of the tree $T'$ containing $v$, $u<v$ and $(u,v)\in E$, then necessarily $\kappa(u)>\kappa(v)$, otherwise the algorithm would have placed $v$ in $T$, which is a contradiction. 

This shows that we can reconstruct all the $G$-coinversions of $\kappa$ from $G$ and $\Phi_G(\kappa)$, as we wanted. 

For the last statement, point (ii) above shows that a $G$-inversion of $\Phi_G(\kappa)$ must necessarily correspond to a $G$-coinversion of $\kappa$, and this was the only thing that was missing in the proof of Proposition~\ref{prop:Phi_wt_coinv} to get the actual equality $\mathsf{wt}_G(\Phi_G(\kappa))= \mathsf{coinv}_G(\kappa)$: see Remark~\ref{rmk:missing_equality}.
\end{proof}
\begin{remark}
	In Proposition~\ref{prop:Phi_Coinv_inverval} we cannot drop the hypothesis $G\in \mathcal{IG}_n$ even for the first statement. 
	For example the graph $G=([3],\{(1,3),(2,3)\}$ is not in $\mathcal{IG}_3$, and now $\Phi_G(123)=\Phi_G(132)=((\{1,3\},\{(1,3)\}),(\{2\},\varnothing))$ but $\mathsf{CoInv}_G(123)=\{(1,3),(2,3)\}\neq\{(1,3)\}=\mathsf{CoInv}_G(132)$.
\end{remark}

\begin{remark} \label{rmk:CoInvG_of_F}
When $G=([n],E)\in\mathcal{IG}_n$ is an interval graph, using Theorem~\ref{thm:PhiG_surjective} we can define for any $F\in\mathsf{ISF}(G)$ a set of \emph{$G$-coinversions} of $F$ by setting $\mathsf{CoInv}_G(F):=\mathsf{CoInv}_G(f_G(F))$, where $f_G$ is the function defined by Algorithm~\ref{alg:PhiG_inverse}. Then Proposition~\ref{prop:Phi_Coinv_inverval} shows that for every $\kappa\in\mathsf{PC}(G)$, $\phi(\kappa)=F$ if and only if $\mathsf{CoInv}_G(\kappa)=\mathsf{CoInv}_G(F)$.
\end{remark}

Here is an application of our results.

\begin{corollary}
For $G=K_n$ the complete graph, the restriction $\Phi_{K_n}|_{\mathfrak{S}_n}:\mathfrak{S}_n\to \mathsf{ISF}(K_n)$ is a bijection, and
\[[n]_q!=\sum_{F\in \mathsf{ISF}(K_n)}q^{\mathsf{wt}_{K_n}(F)}.\]
\end{corollary}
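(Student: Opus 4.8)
The plan is to specialize the general machinery already developed for interval graphs to $G = K_n$ (which lies in $\mathcal{IG}_n$) and read off both assertions. Surjectivity of $\Phi_{K_n}|_{\mathfrak{S}_n}$ is immediate: Theorem~\ref{thm:PhiG_surjective} already records $\mathsf{ISF}(G) = \{\Phi_G(\sigma) \mid \sigma \in \mathfrak{S}_n\}$ for any graph on $[n]$. For injectivity I would use the first part of Proposition~\ref{prop:Phi_Coinv_inverval}: since $K_n$ is an interval graph, $\Phi_{K_n}(\sigma) = \Phi_{K_n}(\sigma')$ forces $\mathsf{CoInv}_{K_n}(\sigma) = \mathsf{CoInv}_{K_n}(\sigma')$. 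But for the complete graph this set records, for every pair $i < j$, whether $\sigma(i) < \sigma(j)$, and these comparisons determine $\sigma$ (for instance $\sigma^{-1}(1)$ is the unique position lying below all others, and one recurses on the rest), so $\sigma = \sigma'$. Hence $\Phi_{K_n}|_{\mathfrak{S}_n}$ is a bijection; in particular $|\mathsf{ISF}(K_n)| = n!$.

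For the generating-function identity, I would transport the sum along this bijection and apply the last clause of Proposition~\ref{prop:Phi_Coinv_inverval}, namely $\mathsf{wt}_{K_n}(\Phi_{K_n}(\sigma)) = \mathsf{coinv}_{K_n}(\sigma)$ for every $\sigma$, obtaining
\[\sum_{F \in \mathsf{ISF}(K_n)} q^{\mathsf{wt}_{K_n}(F)} = \sum_{\sigma \in \mathfrak{S}_n} q^{\mathsf{coinv}_{K_n}(\sigma)}.\]
By Remark~\ref{rmk:inv_for_Kn}, $\mathsf{inv}_{K_n}(\sigma) = \mathsf{inv}(\sigma)$, and since a permutation is a proper coloring of $K_n$, its inversions and coinversions partition the $\binom{n}{2}$ edges, so $\mathsf{coinv}_{K_n}(\sigma) = \binom{n}{2} - \mathsf{inv}(\sigma)$. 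Replacing $\sigma$ by $w_0\sigma$ (the longest element $w_0$ reverses the order of the values, hence exchanges inversions and coinversions) turns the right-hand side into $\sum_{\sigma \in \mathfrak{S}_n} q^{\mathsf{inv}(\sigma)}$, which is the classical $q$-factorial $[n]_q!$; this is the claimed identity.

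I do not expect a genuine obstacle: the corollary falls out of Theorem~\ref{thm:PhiG_surjective} and Proposition~\ref{prop:Phi_Coinv_inverval} with little effort. The only step that needs an explicit (if elementary) argument is injectivity, i.e. the observation that a permutation is recovered from the full collection of its pairwise order comparisons --- equivalently, from $\mathsf{CoInv}_{K_n}$ --- after which everything reduces to bookkeeping and the standard Mahonian generating-function identity.
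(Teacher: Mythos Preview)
Your proposal is correct and follows essentially the same approach as the paper's proof: surjectivity from Theorem~\ref{thm:PhiG_surjective}, injectivity via Proposition~\ref{prop:Phi_Coinv_inverval} together with the fact that the full set of (co)inversions determines a permutation (the paper invokes the Lehmer code where you argue directly), and the identity from $\mathsf{wt}_{K_n}(\Phi_{K_n}(\sigma)) = \mathsf{coinv}_{K_n}(\sigma)$ plus the Mahonian property of $\mathsf{coinv}$.
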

\begin{proof}
The first statement is clear: by Theorem~\ref{thm:PhiG_surjective} the restriction is always surjective, and when $G=K_n$ the set $\mathsf{CoInv}_G(\sigma)$ is simply the set of coinversions of $\sigma$, which is well known to determine $\sigma$ uniquely (essentially via the \emph{Lehmer code}).

The formula follows from the property that $\mathsf{wt}_{K_n}(\Phi_{K_n}(\sigma))=\mathsf{coinv}_{K_n}(\sigma)$ (cf.\ Proposition~\ref{prop:Phi_Coinv_inverval}), and the well-known fact that the number of (co)inversions is a Mahonian statistic.
\end{proof}

We are now ready to define quasisymmetric functions associated to increasing spanning forests of interval graphs.

\begin{definition}
Given an interval graph $G=([n],E)\in \mathcal{IG}_n$, and given $F\in \mathsf{ISF}(G)$, we define the formal power series
\[\mathcal{Q}_F^{(G)}=\mathcal{Q}_F^{(G)}[X]:=\mathop{\sum_{\kappa\in \mathsf{PC}(G)}}_{\Phi_G(\kappa)=F}x_\kappa.\]
\end{definition}
We give immediately the following fundamental formula.
\begin{theorem} \label{thm:forest_fundamental}
Given an interval graph $G=([n],E)\in \mathcal{IG}_n$, and given $F\in \mathsf{ISF}(G)$, we have
\begin{equation}
	\mathcal{Q}_F^{(G)}=\mathop{\sum_{\sigma\in \mathfrak{S}_n}}_{\mathsf{CoInv}_G(\sigma)=\mathsf{CoInv}_G(F)}L_{n,\mathsf{Des}_G(\sigma^{-1})}.
\end{equation}
\end{theorem}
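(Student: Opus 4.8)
The plan is to partition the set of proper colorings mapping to $F$ according to their standardization $\phi(\kappa)$. Since $\Phi_G$ is surjective (Theorem~\ref{thm:PhiG_surjective}), the fiber $\Phi_G^{-1}(F)\cap \mathsf{PC}(G)$ is nonempty, and by Remark~\ref{rmk:CoInvG_of_F} it equals exactly $\{\kappa \in \mathsf{PC}(G)\mid \mathsf{CoInv}_G(\kappa)=\mathsf{CoInv}_G(F)\}$. First I would recall that for $\kappa\in \mathsf{PC}(G)$ we have $\mathsf{CoInv}_G(\kappa)=\mathsf{CoInv}_G(\phi(\kappa))$ (Remark~\ref{rem:Inv_CoInv}, the proper case), so the condition $\mathsf{CoInv}_G(\kappa)=\mathsf{CoInv}_G(F)$ depends only on $\sigma:=\phi(\kappa)$. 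Hence the fiber decomposes as a disjoint union over those $\sigma\in \mathfrak{S}_n$ with $\mathsf{CoInv}_G(\sigma)=\mathsf{CoInv}_G(F)$ of the sets $\{\kappa\in \mathsf{PC}(G)\mid \phi(\kappa)=\sigma\}$.

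Next I would sum the monomial weights over each block. By the lemma stating $\sum_{\kappa\in \mathsf{PC}(G),\,\phi(\kappa)=\sigma}x_\kappa=L_{n,\mathsf{Des}_G(\sigma^{-1})}$ (the corollary to the interval-graph coloring lemma in Section~5), each block contributes exactly $L_{n,\mathsf{Des}_G(\sigma^{-1})}$. Summing over all admissible $\sigma$ gives
\[
\mathcal{Q}_F^{(G)}=\mathop{\sum_{\sigma\in \mathfrak{S}_n}}_{\mathsf{CoInv}_G(\sigma)=\mathsf{CoInv}_G(F)}L_{n,\mathsf{Des}_G(\sigma^{-1})},
\]
which is the claimed identity. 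The only subtlety is making sure the decomposition is over a correct index set: a priori one indexes over $\sigma$ in the image of $\phi$ restricted to the fiber, but since every $\sigma\in \mathfrak{S}_n$ is itself a proper coloring with $\phi(\sigma)=\sigma$ and $\mathsf{CoInv}_G(\sigma)=\mathsf{CoInv}_G(F)$ (by Remark~\ref{rmk:CoInvG_of_F}, $\Phi_G(\sigma)=F$ for such $\sigma$), these two index sets coincide.

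I expect the main (mild) obstacle to be the bookkeeping around which results apply where: one must invoke the \emph{proper} version of Remark~\ref{rem:Inv_CoInv} (that $\mathsf{CoInv}_G(\kappa)=\mathsf{CoInv}_G(\phi(\kappa))$ for proper $\kappa$, not merely the inclusion that holds for general colorings), and one must use the interval-graph-specific characterization from Proposition~\ref{prop:Phi_Coinv_inverval} / Remark~\ref{rmk:CoInvG_of_F} that $\Phi_G(\kappa)=F \iff \mathsf{CoInv}_G(\kappa)=\mathsf{CoInv}_G(F)$ — this is precisely where the hypothesis $G\in \mathcal{IG}_n$ is essential. Everything else is a routine rearrangement of a disjoint union of generating functions, so the proof is short.
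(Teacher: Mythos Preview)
Your proof is correct and follows essentially the same route as the paper: the paper first rewrites $\mathcal{Q}_F^{(G)}$ as $\sum_{\kappa\in\mathsf{PC}(G),\ \mathsf{CoInv}_G(\kappa)=\mathsf{CoInv}_G(F)} x_\kappa$ via Proposition~\ref{prop:Phi_Coinv_inverval} and Theorem~\ref{thm:PhiG_surjective}, and then applies Proposition~\ref{prop:inv_coinv_formulae} at $q=1$. You simply unpack that last proposition inline (partitioning by standardization and invoking the lemma $\sum_{\kappa\in\mathsf{PC}(G),\ \phi(\kappa)=\sigma} x_\kappa = L_{n,\mathsf{Des}_G(\sigma^{-1})}$), which is exactly how Proposition~\ref{prop:inv_coinv_formulae} itself is proved.
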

\begin{proof}
Combining Proposition~\ref{prop:Phi_Coinv_inverval} and Theorem~\ref{thm:PhiG_surjective}, we have
\[\mathcal{Q}_F^{(G)}=\mathop{\sum_{\kappa\in \mathsf{PC}(G)}}_{\mathsf{CoInv}_G(\kappa)=\mathsf{CoInv}_G(F)} x_\kappa,\]
and now the formula follows from Proposition~\ref{prop:inv_coinv_formulae} (at $q=1$).
\end{proof}

For example, for $G=([8],E)$ the interval graph in Figure~\ref{fig:interval_graph} and $F\in \mathsf{ISF}(G)$ the forest in Figure~\ref{fig:isf_example}, we get
\begin{align*}
\mathcal{Q}_F^{(G)} & =L_{(1^8)} + L_{(1^6, 2)} + 2 L_{(1^5, 2, 1)} + 3 L_{(1^4, 2, 1^2)} + 2 L_{(1^4, 2^2)}+ L_{(1^4, 3, 1)} + 3 L_{(1^3, 2, 1^3)}\\
&  + 3 L_{(1^3, 2, 1, 2)} + 2 L_{(1^3, 2^2, 1)} + L_{(1^3, 3, 1^2)}+ L_{(1^3, 3, 2)} + 3 L_{(1^2, 2, 1^4)} + 3 L_{(1^2, 2, 1^2, 2)}\\
 &   + 2 L_{(1^2, 2, 1, 2, 1)} + 2 L_{(1^2, 2^2, 1^2)} + 2 L_{(1^2, 2^3)}+ L_{(1^2, 3, 1^3)} + L_{(1^2, 3, 1, 2)} + 2 L_{(1, 2, 1^5)} \\
 &  + 2 L_{(1, 2, 1^3, 2)} + 2 L_{(1, 2, 1^2, 2, 1)} + 2 L_{(1, 2, 1, 2, 1^2)}+ 2 L_{(1, 2, 1, 2^2)} + 2 L_{(1, 2^2, 1^3)} + 2 L_{(1, 2^2, 1, 2)}\\
 &   + L_{(1, 3, 1^4)} + L_{(1, 3, 1^2, 2)}.
\end{align*}

For a smaller example, consider the interval graph $G=([3],\{(1,2),(1,3)\})\in \mathcal{IG}_3$ and the forest $F=([3],\{(1,2),(1,3)\})\in \mathsf{ISF}(G)$. Then $\mathsf{CoInv}_G(F)=\{(1,2),(1,3)\}$, and $\{\sigma\in \mathfrak{S}_3\mid \mathsf{CoInv}_G(\sigma)=\mathsf{CoInv}_G(F)\}=\{\sigma=123,\tau=132\}$, $\mathsf{Des}_G(\sigma^{-1})=\{1\}$, $\mathsf{Des}_G(\tau^{-1})=\{1,2\}$, hence
\[\mathcal{Q}_F^{(G)} = L_{(1,2)}+L_{(1^3)}.\]
\smallskip

Notice that Theorem~\ref{thm:forest_fundamental} shows in particular that our $\mathcal{Q}_F^{(G)}$ are quasisymmetric functions. 

In fact it is not hard to write the expansion of $\mathcal{Q}_F^{(G)}$ in the monomial quasisymmetric functions. Given $G=([n],E)$ a graph, call $\mathsf{PPC}(G)\subset \mathsf{PC}(G)$ the set of \emph{packed proper colorings} of $G$, i.e.\ proper colorings $\kappa\in \mathsf{PC}(G)$ such that $\{\kappa(i)\mid i\in [n]\}=[r]$ for some $1\leq r\leq n$.  Given $\kappa\in \mathsf{PPC}(G)$, call $\mathsf{ev}(\kappa)$ the \emph{exponents vector}, i.e.\ the composition of $n$ whose $i$-th part is the number of $j\in [n]$ such that $\kappa(j)=i$, for every $i=1,2,\dots,\max(\{\kappa(i)\mid i\in [n]\})$.

The next proposition follows immediately from the definition of $\mathcal{Q}_F^{(G)}$ and the fact that $\mathcal{Q}_F^{(G)}\in \mathrm{QSym}$.
\begin{proposition} \label{prop:Q_in_monomials}
Given an interval graph $G=([n],E)\in \mathcal{IG}_n$, and given $F\in \mathsf{ISF}(G)$, we have
\begin{equation}
	\mathcal{Q}_F^{(G)}=\mathop{\sum_{\kappa\in \mathsf{PPC}(G)}}_{\Phi_G(\kappa)=F}M_{\mathsf{ev}(\kappa)}.
\end{equation}
\end{proposition}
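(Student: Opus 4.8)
The plan is to exploit the fact, established in Theorem~\ref{thm:forest_fundamental}, that $\mathcal{Q}_F^{(G)}$ is a quasisymmetric function, hence can be written uniquely as $\mathcal{Q}_F^{(G)}=\sum_{\alpha\vDash n}c_\alpha M_\alpha$, and to pin down each coefficient $c_\alpha$ by reading off a single well-chosen monomial. Concretely, for a fixed composition $\alpha=(\alpha_1,\dots,\alpha_k)\vDash n$ I would observe that the monomial $x_1^{\alpha_1}x_2^{\alpha_2}\cdots x_k^{\alpha_k}$ occurs in $M_\alpha$ with coefficient $1$ and occurs in no $M_\beta$ with $\beta\neq\alpha$ (it uses the smallest possible variable indices compatible with the shape $\alpha$), so that $c_\alpha$ is exactly the coefficient of $x_1^{\alpha_1}\cdots x_k^{\alpha_k}$ in $\mathcal{Q}_F^{(G)}$.

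Next I would compute this coefficient straight from the definition $\mathcal{Q}_F^{(G)}=\sum_{\kappa\in\mathsf{PC}(G),\,\Phi_G(\kappa)=F}x_\kappa$: it counts the proper colorings $\kappa$ of $G$ with $\Phi_G(\kappa)=F$ whose monomial $x_\kappa$ equals $x_1^{\alpha_1}\cdots x_k^{\alpha_k}$, i.e.\ which use precisely the colors $1,2,\dots,k$, each color $i$ exactly $\alpha_i$ times. Any such $\kappa$ is automatically packed with $\mathsf{ev}(\kappa)=\alpha$, and conversely any $\kappa\in\mathsf{PPC}(G)$ with $\mathsf{ev}(\kappa)=\alpha$ has exactly this monomial; hence
\[c_\alpha=\#\{\kappa\in\mathsf{PPC}(G)\mid \Phi_G(\kappa)=F,\ \mathsf{ev}(\kappa)=\alpha\}.\]
Summing over all $\alpha\vDash n$ then gives $\mathcal{Q}_F^{(G)}=\sum_{\alpha}c_\alpha M_\alpha=\sum_{\kappa\in\mathsf{PPC}(G),\,\Phi_G(\kappa)=F}M_{\mathsf{ev}(\kappa)}$, which is the claim. (An equivalent, more bijective way to organize the same argument — one that even bypasses invoking quasisymmetry — is to group the colorings $\kappa$ with $\Phi_G(\kappa)=F$ according to their \emph{packing} $\overline{\kappa}$, obtained by replacing the $i$-th smallest color used by $i$; since an order-preserving relabelling of colors leaves $\mathsf{CoInv}_G$ unchanged, Remark~\ref{rmk:CoInvG_determines_Phi} gives $\Phi_G(\overline{\kappa})=\Phi_G(\kappa)=F$, and for each packed $\kappa_0$ with $\Phi_G(\kappa_0)=F$ the fibre $\{\kappa\mid\overline{\kappa}=\kappa_0\}$ contributes exactly $\sum_{\overline{\kappa}=\kappa_0}x_\kappa=M_{\mathsf{ev}(\kappa_0)}$.)

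I do not anticipate a genuine obstacle: the argument is essentially a bookkeeping exercise. The one point that must be handled carefully is the ``packed'' accounting — that tracking the single monomial $x_1^{\alpha_1}\cdots x_k^{\alpha_k}$ selects precisely the packed proper colorings with evaluation $\alpha$ (equivalently, in the bijective formulation, that the packing map is surjective onto the relevant packed colorings and has fibres realizing exactly the monomials of $M_{\mathsf{ev}(\kappa_0)}$) — together with the standard fact that a quasisymmetric function is determined by the coefficients of its smallest-index monomials.
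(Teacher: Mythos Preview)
Your proposal is correct and matches the paper's own justification, which simply says the proposition ``follows immediately from the definition of $\mathcal{Q}_F^{(G)}$ and the fact that $\mathcal{Q}_F^{(G)}\in \mathrm{QSym}$.'' Your first argument (quasisymmetry from Theorem~\ref{thm:forest_fundamental} plus reading off the coefficient of $x_1^{\alpha_1}\cdots x_k^{\alpha_k}$) is precisely an unpacking of that sentence; the alternative packing-fibre argument you sketch is also valid but more than what is needed.
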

For example, consider the interval graph $G=([3],\{(1,2),(1,3)\})\in \mathcal{IG}_3$ and the increasing spanning forest $F=([3],\{(1,2),(1,3)\})\in \mathsf{ISF}(G)$. Then $\mathsf{CoInv}_G(F)=\{(1,2),(1,3)\}$ and \[\{\kappa\in\mathsf{PPC}(G)\mid \Phi_G(\kappa)=F\}=\{\kappa\in\mathsf{PPC}(G)\mid \mathsf{CoInv}_G(\kappa)=\mathsf{CoInv}_G(F)\}=\{123,132,122\},\]
giving
\[\mathcal{Q}_F^{(G)} = M_{(1,2)}+2M_{(1^3)},\]
which agrees with our previous computation.

\section{Interval orders, chromatic quasisymmetric functions and LLT} \label{sec:chromatic_LLT}

Given any simple graph $G=([n],E)$, Shareshian and Wachs defined in \cite{Shareshian_Wachs_Advances} its \emph{chromatic quasisymmetric function} as
\[\chi_G[X;q]:=\sum_{\kappa \in \mathsf{PC}(G)}q^{\mathsf{coinv}_G(\kappa)}x_\kappa.\]
\begin{remark}
Notice that in the literature, following the original work \cite{Shareshian_Wachs_Advances}, what we call $\mathsf{coinv}_G$ ($\mathsf{inv}_G$ respectively) is called a $\mathsf{asc}_G$ ($\mathsf{des}_G$ respectively). We prefer to avoid the reminescence of the  words ``ascents'' and ``descents'', as they are commonly used (also in the present work) with different meanings.
\end{remark}

The following theorem is a direct consequence of Proposition~\ref{prop:Phi_Coinv_inverval} and Theorem~\ref{thm:PhiG_surjective}.
\begin{theorem} \label{thm:chrom_forests_formula}
Given an interval graph $G=([n],E)\in \mathcal{IG}_n$, we have
\begin{equation}
\chi_G[X;q]=\sum_{F\in \mathsf{ISF}(G)}q^{\mathsf{wt}_G(F)}\mathcal{Q}_F^{(G)}.
\end{equation}
\end{theorem}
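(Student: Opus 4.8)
The plan is to rewrite the defining sum of $\chi_G[X;q]$ by grouping the proper colorings of $G$ according to the fiber of $\Phi_G$ they lie in. By Proposition~\ref{prop:PhiG_welldefined} the map $\Phi_G\colon\mathsf{PC}(G)\to\mathsf{ISF}(G)$ is well defined, so its fibers $\Phi_G^{-1}(F)$, as $F$ ranges over $\mathsf{ISF}(G)$, partition $\mathsf{PC}(G)$ (and by Theorem~\ref{thm:PhiG_surjective} every fiber is nonempty, though this is not strictly needed). Hence
\[\chi_G[X;q]=\sum_{\kappa\in\mathsf{PC}(G)}q^{\mathsf{coinv}_G(\kappa)}x_\kappa=\sum_{F\in\mathsf{ISF}(G)}\ \mathop{\sum_{\kappa\in\mathsf{PC}(G)}}_{\Phi_G(\kappa)=F}q^{\mathsf{coinv}_G(\kappa)}x_\kappa.\]

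The key input is then Proposition~\ref{prop:Phi_Coinv_inverval}: since $G$ is an interval graph, $\mathsf{wt}_G(\Phi_G(\kappa))=\mathsf{coinv}_G(\kappa)$ for every $\kappa\in\mathsf{PC}(G)$. Therefore, on the whole fiber $\Phi_G^{-1}(F)$ the statistic $\mathsf{coinv}_G(\kappa)$ is constant, equal to $\mathsf{wt}_G(F)$, so the factor $q^{\mathsf{coinv}_G(\kappa)}$ can be pulled out of the inner sum. This gives
\[\mathop{\sum_{\kappa\in\mathsf{PC}(G)}}_{\Phi_G(\kappa)=F}q^{\mathsf{coinv}_G(\kappa)}x_\kappa=q^{\mathsf{wt}_G(F)}\mathop{\sum_{\kappa\in\mathsf{PC}(G)}}_{\Phi_G(\kappa)=F}x_\kappa=q^{\mathsf{wt}_G(F)}\,\mathcal{Q}_F^{(G)},\]
the last equality being the definition of $\mathcal{Q}_F^{(G)}$. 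Substituting back and summing over $F\in\mathsf{ISF}(G)$ yields the asserted identity.

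There is no genuine obstacle in the argument itself: the entire content is already packaged in Proposition~\ref{prop:Phi_Coinv_inverval} and Theorem~\ref{thm:PhiG_surjective}, and what remains is a one-line reindexing. The point that deserves emphasis is that the equality $\mathsf{wt}_G(\Phi_G(\kappa))=\mathsf{coinv}_G(\kappa)$ genuinely uses the interval hypothesis; for a general graph only the inequality $\mathsf{wt}_G(\Phi_G(\kappa))\ge\mathsf{coinv}_G(\kappa)$ of Proposition~\ref{prop:Phi_wt_coinv} is available (cf.\ Remark~\ref{rmk:missing_equality}), and one would then obtain merely a coefficientwise domination rather than the clean expansion above. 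So all the real work has been done upstream, and this proof is essentially immediate.
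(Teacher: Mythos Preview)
Your proof is correct and is exactly the argument the paper has in mind: the paper simply states that the theorem ``is a direct consequence of Proposition~\ref{prop:Phi_Coinv_inverval} and Theorem~\ref{thm:PhiG_surjective}'', and you have spelled out precisely that one-line reindexing. Your additional remark about why the interval hypothesis is essential (via Remark~\ref{rmk:missing_equality}) is a nice clarification that the paper leaves implicit.
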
 
\begin{example}\label{ex:chrom_csf}
For $G=([3],\{(1,2),(1,3)\})$, the increasing spanning forests of $G$ are 
\begin{align*}
	& F_1=(([3],\{(1,2),(1,3)\})),\quad  F_2=((\{1,3\},\{(1,3)\}),(\{2\},\varnothing)),\\  &F_3=((\{1,2\},\{(1,2)\}),(\{3\},\varnothing)),\quad  F_4=((\{1\},\varnothing),(\{2\},\varnothing),(\{3\},\varnothing)),
\end{align*} 
and we compute
\[\mathsf{wt}_G(F_1)=2,\quad \mathsf{wt}_G(F_2)=\mathsf{wt}_G(F_3)=1,\quad  \mathsf{wt}_G(F_4)=0,\]  \[\mathcal{Q}_{F_1}^{(G)}=L_{(1,2)}+L_{(1^3)}, \quad  \mathcal{Q}_{F_2}^{(G)}=\mathcal{Q}_{F_3}^{(G)}=L_{(1^3)},\quad \mathcal{Q}_{F_4}^{(G)}=L_{(2,1)}+L_{(1^3)},\] hence finally \[\chi_G[X;q]=L_{(2,1)}+q^2L_{(1,2)}+(1+2q+q^2)L_{(1^3)}.\]
\end{example}

The following corollary is a reformulation of \cite{Shareshian_Wachs_Advances}*{Theorem~3.1} in our cases, and it follows immediately from Theorems~\ref{thm:chrom_forests_formula} and \ref{thm:forest_fundamental}. 
\begin{corollary} \label{cor:chrom_fundamentals}
Given an interval graph $G=([n],E)\in \mathcal{IG}_n$, we have
\begin{equation} \label{eq:cor_chiG_coinv}
	\chi_G[X;q]=\sum_{\sigma\in \mathfrak{S}_n}q^{\mathsf{coinv}_G(\sigma)}L_{n,\mathsf{Des}_G(\sigma^{-1})}.
\end{equation}
\end{corollary}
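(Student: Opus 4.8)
The plan is to combine the two structural theorems just proved. By Theorem~\ref{thm:chrom_forests_formula} we may write $\chi_G[X;q]=\sum_{F\in\mathsf{ISF}(G)}q^{\mathsf{wt}_G(F)}\mathcal{Q}_F^{(G)}$, and into this I would substitute the fundamental expansion of $\mathcal{Q}_F^{(G)}$ provided by Theorem~\ref{thm:forest_fundamental}, namely $\mathcal{Q}_F^{(G)}=\sum_{\sigma\in\mathfrak{S}_n,\,\mathsf{CoInv}_G(\sigma)=\mathsf{CoInv}_G(F)}L_{n,\mathsf{Des}_G(\sigma^{-1})}$. This yields a double sum over $F\in\mathsf{ISF}(G)$ and over permutations $\sigma$ whose $G$-coinversion set equals that of $F$, each term carrying the weight $q^{\mathsf{wt}_G(F)}$.

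The one point that requires checking is that the power of $q$ attached to $L_{n,\mathsf{Des}_G(\sigma^{-1})}$ in this double sum is exactly $q^{\mathsf{coinv}_G(\sigma)}$. For this I would invoke Proposition~\ref{prop:Phi_Coinv_inverval}: since $G$ is an interval graph, $\mathsf{wt}_G(\Phi_G(\kappa))=\mathsf{coinv}_G(\kappa)$ for every $\kappa\in\mathsf{PC}(G)$. Applying this to $\kappa=f_G(F)$ and recalling $\Phi_G(f_G(F))=F$ (Theorem~\ref{thm:PhiG_surjective}) together with $\mathsf{CoInv}_G(F)=\mathsf{CoInv}_G(f_G(F))$ (Remark~\ref{rmk:CoInvG_of_F}) gives $\mathsf{wt}_G(F)=|\mathsf{CoInv}_G(F)|$. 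Hence for any $\sigma$ contributing to the $F$-term, $\mathsf{coinv}_G(\sigma)=|\mathsf{CoInv}_G(\sigma)|=|\mathsf{CoInv}_G(F)|=\mathsf{wt}_G(F)$, so the factor $q^{\mathsf{wt}_G(F)}$ may be pushed inside the inner sum and rewritten as $q^{\mathsf{coinv}_G(\sigma)}$.

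Finally I would observe that, again by Proposition~\ref{prop:Phi_Coinv_inverval} and the surjectivity in Theorem~\ref{thm:PhiG_surjective}, the sets $\{\sigma\in\mathfrak{S}_n\mid \mathsf{CoInv}_G(\sigma)=\mathsf{CoInv}_G(F)\}=\{\sigma\in\mathfrak{S}_n\mid \Phi_G(\sigma)=F\}$, as $F$ ranges over $\mathsf{ISF}(G)=\Phi_G(\mathfrak{S}_n)$, partition $\mathfrak{S}_n$; collapsing the double sum over this partition produces $\sum_{\sigma\in\mathfrak{S}_n}q^{\mathsf{coinv}_G(\sigma)}L_{n,\mathsf{Des}_G(\sigma^{-1})}$, as claimed. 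I do not anticipate a genuine obstacle: the statement is essentially bookkeeping on top of the earlier results. An even shorter alternative, which I would mention, bypasses the forests entirely: group the proper colorings in the definition $\chi_G[X;q]=\sum_{\kappa\in\mathsf{PC}(G)}q^{\mathsf{coinv}_G(\kappa)}x_\kappa$ according to their $G$-coinversion set $S\in\mathsf{CoInv}(G)$ and apply the second identity of Proposition~\ref{prop:inv_coinv_formulae} to each block.
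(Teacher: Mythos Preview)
Your proposal is correct and follows exactly the route the paper indicates: the corollary is stated as an immediate consequence of Theorems~\ref{thm:chrom_forests_formula} and \ref{thm:forest_fundamental}, and you have simply spelled out the bookkeeping (via Proposition~\ref{prop:Phi_Coinv_inverval} and Theorem~\ref{thm:PhiG_surjective}) that makes this immediate. Your closing alternative via Proposition~\ref{prop:inv_coinv_formulae} is also valid and arguably the most direct path.
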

\begin{remark} \label{rmk:SW_notation}
To see why Corollary~\ref{cor:chrom_fundamentals} is a reformulation of \cite{Shareshian_Wachs_Advances}*{Theorem~3.1}, for any $\sigma\in \mathfrak{S}_n$ set $\overline{\sigma}=\sigma(n)\sigma(n-1)\cdots \sigma(2)\sigma(1)$, i.e.\ $\overline{\sigma}(i):=\sigma(n+1-i)$ for every $i\in [n]$. If for every $\sigma\in \mathfrak{S}_n$ we set\footnote{In \cite{Shareshian_Wachs_Advances} they call ``$\mathrm{inv}_G$'' what we call here ``$\widetilde{\mathsf{inv}}_G$'' and ``$\mathrm{DES}_P$'' essentially what we call here ``$\widetilde{\mathsf{Des}}_G$'' ($P$ refers to the poset ``behind'' $G$).}
\[\widetilde{\mathsf{Des}}_G(\sigma):=\{i\in [n-1]\mid \sigma(i)>\sigma(i+1)\text{ and }\{\sigma(i),\sigma(i+1)\}\notin E\}\]
and
\[\widetilde{\mathsf{inv}}_G(\sigma):=\{\{\sigma(i),\sigma(j)\}\in E\mid i<j \text{ and } \sigma(i)>\sigma(j)\},\]
then \cite{Shareshian_Wachs_Advances}*{Theorem~3.1} can be restated as
\[\psi \chi_G[X;q] = \sum_{\sigma\in \mathfrak{S}_n}q^{\widetilde{\mathsf{inv}}_G(\sigma)}L_{n,n-\widetilde{\mathsf{Des}}_G(\sigma)}\]
recalling that for every $S\subseteq [n-1]$, $n-S:=\{n-i\mid i\in S\}$, and $\psi$ is the involution sending $L_{n,S}$ to $L_{n,[n-1]\setminus S}$.

It is now easy to check that 
\begin{equation}\label{eq:Destilde}
[n-1]\setminus (n-\widetilde{\mathsf{Des}}_G(\sigma))=\mathsf{Des}_G(\overline{\sigma})
\end{equation}
and  
\begin{equation}\label{eq:invtilde}
\widetilde{\mathsf{inv}}_G(\sigma)=\mathsf{inv}_G(\sigma^{-1})=\mathsf{coinv}_G(\overline{\sigma}^{-1}).
\end{equation}
\end{remark}

Using Propositions~\ref{prop:Phi_Coinv_inverval} and \ref{prop:Q_in_monomials}, from Theorem~\ref{thm:chrom_forests_formula} we also get the following formula.
\begin{corollary}
Given an interval graph $G=([n],E)\in \mathcal{IG}_n$, we have
\[ \chi_G[X;q]=\sum_{\kappa\in \mathsf{PPC}(G)}q^{\mathsf{coinv}_G(\kappa)}M_{\mathsf{ev}(\kappa)}. \]
\end{corollary}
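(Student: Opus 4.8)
The plan is to substitute the monomial expansion of each $\mathcal{Q}_F^{(G)}$ into the forest formula for $\chi_G[X;q]$ and then reindex the resulting double sum over packed proper colorings. Concretely, I would start from Theorem~\ref{thm:chrom_forests_formula}, which gives $\chi_G[X;q]=\sum_{F\in \mathsf{ISF}(G)}q^{\mathsf{wt}_G(F)}\mathcal{Q}_F^{(G)}$, and then apply Proposition~\ref{prop:Q_in_monomials} to rewrite each summand as $\mathcal{Q}_F^{(G)}=\sum_{\kappa\in \mathsf{PPC}(G),\ \Phi_G(\kappa)=F}M_{\mathsf{ev}(\kappa)}$.

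Next I would interchange the two sums. Since $\Phi_G$ is a well-defined function $\mathsf{PC}(G)\to\mathsf{ISF}(G)$ (Proposition~\ref{prop:PhiG_welldefined}) and $\mathsf{PPC}(G)\subseteq\mathsf{PC}(G)$, its restriction to $\mathsf{PPC}(G)$ partitions that set into the fibers $\Phi_G^{-1}(F)\cap\mathsf{PPC}(G)$, $F\in\mathsf{ISF}(G)$ (some possibly empty). Hence the double sum $\sum_F\sum_{\kappa:\,\Phi_G(\kappa)=F}$ collapses to a single sum over $\kappa\in\mathsf{PPC}(G)$, with the weight $q^{\mathsf{wt}_G(F)}$ replaced by $q^{\mathsf{wt}_G(\Phi_G(\kappa))}$, yielding $\chi_G[X;q]=\sum_{\kappa\in\mathsf{PPC}(G)}q^{\mathsf{wt}_G(\Phi_G(\kappa))}M_{\mathsf{ev}(\kappa)}$. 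Finally, I would invoke the equality $\mathsf{wt}_G(\Phi_G(\kappa))=\mathsf{coinv}_G(\kappa)$, valid for every $\kappa\in\mathsf{PC}(G)$ when $G\in\mathcal{IG}_n$, from Proposition~\ref{prop:Phi_Coinv_inverval}; substituting this in produces exactly the claimed identity $\chi_G[X;q]=\sum_{\kappa\in\mathsf{PPC}(G)}q^{\mathsf{coinv}_G(\kappa)}M_{\mathsf{ev}(\kappa)}$.

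There is essentially no genuine difficulty here: the corollary is a bookkeeping consequence of the three cited results. The only point requiring a moment's care is checking that the reindexing of the double sum is legitimate, i.e.\ that $\Phi_G$ restricted to $\mathsf{PPC}(G)$ really is a function into $\mathsf{ISF}(G)$ so that the fibers are well-defined and cover $\mathsf{PPC}(G)$ — which is immediate — and that no convergence subtlety arises, which is fine since everything is a formal power series. As an alternative route one could bypass the forest machinery and argue directly from $\chi_G[X;q]=\sum_{\kappa\in\mathsf{PC}(G)}q^{\mathsf{coinv}_G(\kappa)}x_\kappa$ by grouping each proper coloring with the packed proper coloring obtained by compressing its set of used colors to an initial block, using that $\mathsf{coinv}_G$ and $\mathsf{ev}$ depend only on that packed representative and that summing $x_\kappa$ over the corresponding block of colorings gives $M_{\mathsf{ev}(\kappa)}$; but the path through Theorem~\ref{thm:chrom_forests_formula} together with Propositions~\ref{prop:Phi_Coinv_inverval} and~\ref{prop:Q_in_monomials} is the shortest and is the one I would write up.
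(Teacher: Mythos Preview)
Your proof is correct and follows exactly the route the paper indicates: combine Theorem~\ref{thm:chrom_forests_formula} with Proposition~\ref{prop:Q_in_monomials}, reindex the double sum, and replace $\mathsf{wt}_G(\Phi_G(\kappa))$ by $\mathsf{coinv}_G(\kappa)$ via Proposition~\ref{prop:Phi_Coinv_inverval}. The paper states this in one line, and your expanded version is just the natural unpacking of that sentence.
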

\smallskip

Given any simple graph $G=([n],E)\in \mathcal{IG}_n$, we define its \emph{LLT quasisymmetric function} as
\[\mathrm{LLT}_G[X;q]:=\sum_{\kappa \in \mathsf{C}(G)}q^{\mathsf{inv}_G(\kappa)}x_\kappa.\]

The following formula is an immediate consequence of Proposition~\ref{prop:invG_LLT}.
\begin{theorem}\label{thm:LLT_fundamental}
Given any simple graph $G=([n],E)$, we have
\[\mathrm{LLT}_G[X;q]=\sum_{\sigma\in \mathfrak{S}_n}q^{\mathsf{inv}_G(\sigma)}L_{n,\mathsf{Des}(\sigma^{-1})}.\]
\end{theorem}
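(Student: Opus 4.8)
The plan is to prove the identity by partitioning the (infinite) set $\mathsf{C}(G)$ of colorings according to the value of $\mathsf{Inv}_G(\kappa)$ and then applying Proposition~\ref{prop:invG_LLT} block by block, so the whole argument is a short bookkeeping computation on top of that proposition.

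First I would record the indexing set. By the first half of Remark~\ref{rem:Inv_CoInv}, every $\kappa\in\mathsf{C}(G)$ satisfies $\mathsf{Inv}_G(\kappa)=\mathsf{Inv}_G(\phi(\kappa))$, where $\phi(\kappa)\in\mathfrak{S}_n$ is the left-to-right standardization; since $\mathfrak{S}_n\subseteq\mathsf{C}(G)$, this shows that the set of values attained by $\mathsf{Inv}_G$ on $\mathsf{C}(G)$ is exactly $\mathsf{Inv}(G)=\{\mathsf{Inv}_G(\sigma)\mid\sigma\in\mathfrak{S}_n\}$. Hence, grouping the colorings by their set of $G$-inversions,
\[
\mathrm{LLT}_G[X;q]=\sum_{\kappa\in\mathsf{C}(G)}q^{\mathsf{inv}_G(\kappa)}x_\kappa=\sum_{S\in\mathsf{Inv}(G)}\ \mathop{\sum_{\kappa\in\mathsf{C}(G)}}_{\mathsf{Inv}_G(\kappa)=S}q^{\mathsf{inv}_G(\kappa)}x_\kappa .
\]

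Next I would substitute Proposition~\ref{prop:invG_LLT} into each inner sum, replacing it by $\sum_{\sigma\in\mathfrak{S}_n,\,\mathsf{Inv}_G(\sigma)=S}q^{\mathsf{inv}_G(\sigma)}L_{n,\mathsf{Des}(\sigma^{-1})}$, and then recombine: the sets $\{\sigma\in\mathfrak{S}_n\mid\mathsf{Inv}_G(\sigma)=S\}$ for $S\in\mathsf{Inv}(G)$ partition $\mathfrak{S}_n$, so summing over all $S$ collapses the double sum to $\sum_{\sigma\in\mathfrak{S}_n}q^{\mathsf{inv}_G(\sigma)}L_{n,\mathsf{Des}(\sigma^{-1})}$, which is the desired formula.

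There is essentially no obstacle: the only point that needs a word of care is that $\mathsf{Inv}_G$ takes precisely the same set of values on $\mathsf{C}(G)$ as on $\mathfrak{S}_n$, so that no block is lost or spurious when we reindex by $\mathsf{Inv}(G)$ — and this is exactly the equality $\mathsf{Inv}_G(\kappa)=\mathsf{Inv}_G(\phi(\kappa))$ of Remark~\ref{rem:Inv_CoInv}. I would also remark, as the paper already notes after Proposition~\ref{prop:invG_LLT}, that the corresponding statement with $\mathsf{Inv}$ and $\mathsf{inv}$ replaced by $\mathsf{CoInv}$ and $\mathsf{coinv}$ fails in general (since $\mathsf{CoInv}_G(\kappa)$ can differ from $\mathsf{CoInv}_G(\phi(\kappa))$ for non-proper $\kappa$); this is precisely why the LLT quasisymmetric function is defined using $\mathsf{inv}_G$ over all colorings, and why no properness hypothesis on $G$ is needed here.
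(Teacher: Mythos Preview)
Your proof is correct and is exactly the argument the paper intends: the paper simply states that the theorem is ``an immediate consequence of Proposition~\ref{prop:invG_LLT}'', and you have spelled out precisely that deduction, partitioning $\mathsf{C}(G)$ by $\mathsf{Inv}_G$ and invoking Remark~\ref{rem:Inv_CoInv} to see that the blocks are indexed by $\mathsf{Inv}(G)$.
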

For example, if $G=([3],\{(1,2),(1,3)\})\in \mathcal{IG}_3$, then
\[\mathrm{LLT}_G[X;q]=q^2L_{(1^3)}+(q+q^2)L_{(1,2)}+(1+q)L_{(2,1)}+L_{(3)}.\]

\smallskip 

The name LLT of these quasisymmetric functions comes from the following well-known facts: when $G$ is a Dyck graph, $\mathrm{LLT}_G[X;q]$ is a symmetric function, and in fact it is a so called \emph{unicellular LLT symmetric functions} (see e.g.\ \cite{Alexandersson-Panova}*{Section~3}). In this case the formula in Theorem~\ref{thm:LLT_fundamental} is well known (e.g.\ it can be deduced from \cite{Novelli_Thibon_NoncommLLT}*{Theorem~8.6})).

Notice that $\mathrm{LLT}_G[X;q]$ is typically not symmetric when $G$ is not a Dyck graph. In particular $\mathrm{LLT}_G[X;q]$ is typically not symmetric when $G\in \mathcal{IG}_n\setminus \mathcal{DG}_n$ (e.g.\ Example~\ref{ex:chrom_csf} is manifestly not symmetric). Still, we argue (cf.\ Theorem~\ref{thm:main_theorem}) that the LLT quasisymmetric functions of interval graphs are an interesting \emph{quasisymmetric} extension of the family of \emph{unicellular LLT} (for a noncommutative analogue see instead \cite{Novelli_Thibon_NoncommLLT}).

\section{A fundamental formula} \label{sec:key_formula}

Let $G=([n],E)\in \mathcal{IG}_n$ be an interval graph.

Given a composition $\beta\vDash n$, let $R(\beta)$ be the set of words of length $|\beta|=n$ in the alphabet $[\ell(\beta)]$ in which $i$ occurs $\beta_i$ times for every $i\in [\ell(\beta)]$. In other words, $R(\beta)$ is the set of all the rearrangements of the word $1^{\beta_1}2^{\beta_2}\cdots \ell(\beta)^{\beta_{\ell(\beta)}}$. Of course we can identify $R(\beta)$ with a subset of colorings $\mathsf{C}(G)$, so that for every $w\in R(\beta)$ 
\[ \inv_G(w)=\#\{\{i,j\}\in E\mid i<j\text{ and }w_i>w_j\}. \]

Recall all the definitions from Section~\ref{sec:qsym}, and the definitions of $\widetilde{\mathsf{inv}}_G$ and $\widetilde{\mathsf{Des}}_G$ from Remark~\ref{rmk:SW_notation}: for every $\sigma\in \mathfrak{S}_n$
\[\widetilde{\mathsf{Des}}_G(\sigma):=\{i\in [n-1]\mid \sigma(i)>\sigma(i+1)\text{ and }\{\sigma(i),\sigma(i+1)\}\notin E\}\]
and
\[\widetilde{\mathsf{inv}}_G(\sigma):=\{\{\sigma(i),\sigma(j)\}\in E\mid i<j \text{ and } \sigma(i)>\sigma(j)\}.\]

Finally, set
\begin{equation} \label{eq:alphaG_definition}
\alpha_G(\sigma):= \mathsf{comp}(n-[n-1]\setminus \mathsf{Des}_G(\overline{\sigma}))=\mathsf{comp}\left(\widetilde{\mathsf{Des}}_G(\sigma)\right),
\end{equation}
where the last equality follows from \eqref{eq:Destilde}.

For example, if $G$ is the graph in Figure~\ref{fig:interval_graph} and $\sigma=31852647\in \mathfrak{S}_8$, then $\mathsf{Des}_G(\overline{\sigma})=\{1,3,4,6\}$, so that $\widetilde{\mathsf{Des}}_G(\sigma)=8-[8-1]\setminus \mathsf{Des}_G(\overline{\sigma})=\{1,3,6\}$, and $\alpha_G(\sigma)=(1,2,3,2)$.

We are ready to state the main result of this section.
\begin{theorem} \label{thm:MainFormula}
	For every interval graph $G=([n],E)\in \mathcal{IG}_n$ and every $\beta\vDash n$
	\begin{equation} \label{eq:MainFormula} \sum_{\alpha\vDash n}q^{\eta(\gamma(\alpha,\beta))}\mathop{\sum_{\sigma\in\mathfrak{S}_n}}_{\alpha_G(\sigma)=\alpha}q^{\widetilde{\inv}_G(\sigma)} =[\beta_1]_q! [\beta_2]_q!\cdots [\beta_{\ell(\beta)}]_q!\sum_{w\in R(\beta)}q^{\inv_G(w)}.
	\end{equation}
\end{theorem}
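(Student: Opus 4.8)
The plan is to rewrite both sides as generating polynomials over $\mathfrak{S}_n$ for two mixed ``maj--inv'' statistics and then to recognize the resulting equidistribution as an instance of the theorem of Kasraoui \cite{Kasraoui_maj-inv}.

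\emph{Rewriting the right-hand side.} Expanding each $q$-factorial as $[\beta_i]_q!=\sum_{\theta_i\in\mathfrak{S}_{\beta_i}}q^{\inv(\theta_i)}$ turns the right-hand side into $\sum q^{\inv_G(w)+\sum_i\inv(\theta_i)}$, the sum running over pairs $\bigl(w,(\theta_1,\dots,\theta_{\ell(\beta)})\bigr)$ with $w\in R(\beta)$ and $\theta_i\in\mathfrak{S}_{\beta_i}$. The classical destandardization map, sending $\sigma\in\mathfrak{S}_n$ to the word $w\in R(\beta)$ that records into which of the value-blocks $\{1,\dots,\beta_1\},\{\beta_1+1,\dots\},\dots$ each value $\sigma(j)$ falls, together with the tuple of patterns of $\sigma$ on these blocks, is a bijection from $\mathfrak{S}_n$ onto $R(\beta)\times\prod_i\mathfrak{S}_{\beta_i}$. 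Splitting each $G$-inversion of $\sigma$ according to whether its two values lie in the same value-block, one checks that $\inv_G(w)$ counts the inter-block $G$-inversions of $\sigma$ while $\sum_i\inv(\theta_i)$ counts all intra-block inversions of $\sigma$; adding these, the right-hand side becomes $\sum_{\sigma\in\mathfrak{S}_n}q^{\inv_G(\sigma)+\nu_\beta(\sigma)}$, where $\nu_\beta(\sigma)$ is the number of intra-block non-edge inversions of $\sigma$.

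\emph{Rewriting the left-hand side.} From the definitions one has $\eta(\mu)=\sum_{s\in\mathsf{set}(\mu)}s$ for any composition $\mu$; combined with $\mathsf{set}(\alpha_G(\sigma))=\widetilde{\mathsf{Des}}_G(\sigma)$ and the fact that $\gamma^i(\alpha_G(\sigma),\beta)$ is the chunk of $\alpha_G(\sigma)\wedge\beta$ sitting in the $i$-th block of positions, this gives $\eta\bigl(\gamma(\alpha_G(\sigma),\beta)\bigr)=\sum_s\bigl(s-b(s)\bigr)$, the sum being over those $s\in\widetilde{\mathsf{Des}}_G(\sigma)$ not lying in $\mathsf{set}(\beta)$ and $b(s)$ being the largest element of $\{0\}\cup\mathsf{set}(\beta)$ below $s$; that is, a block-reset major index over the non-edge descents of $\sigma$. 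Hence the left-hand side equals $\sum_{\sigma\in\mathfrak{S}_n}q^{\widetilde{\inv}_G(\sigma)+b_\beta(\sigma)}$, a statistic which is of $\inv$-type on the edges of $G$ and of block-reset-$\maj$-type on the non-edges.

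\emph{The equidistribution.} It remains to prove $\sum_\sigma q^{\inv_G(\sigma)+\nu_\beta(\sigma)}=\sum_\sigma q^{\widetilde{\inv}_G(\sigma)+b_\beta(\sigma)}$. Applying $\sigma\mapsto\sigma^{-1}$ to the right-hand sum converts $\widetilde{\inv}_G$ into $\inv_G$ read off a coloring (by \eqref{eq:invtilde}) and interchanges the value-block and position-block pictures, so both sides reduce to generating functions over the rearrangement class $R(\beta)$: one for the pure statistic $\inv_G$, the other for the mixed statistic using $\maj$ on the non-edges and $\inv$ on the edges, the factors $[\beta_i]_q!$ accounting exactly for the internal orders of the blocks. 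This equidistribution is the content of Kasraoui's theorem \cite{Kasraoui_maj-inv} (recalled in the next section) once the ``shape'' attached to $G$ is seen to be admissible in his sense. I expect this last point to be the main obstacle: one must verify that the staircase shape of an interval graph --- equivalently, the perfect elimination order from Remark~\ref{rem:interval_clique} --- satisfies Kasraoui's hypothesis, and then track the various reversal/complement operations carefully enough that the block-reset major index on the left matches, on the nose, the normalization by $[\beta_1]_q!\cdots[\beta_{\ell(\beta)}]_q!$ on the right. (The degenerate case $\beta=(1^n)$, in which all $\eta$-terms vanish and the identity reduces to extracting the coefficient of $M_{(1^n)}$ from Theorem~\ref{thm:LLT_fundamental}, is a useful sanity check on the signs.)
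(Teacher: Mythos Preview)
Your rewriting of both sides is correct and useful: the left side does become $\sum_{\sigma}q^{\widetilde{\inv}_G(\sigma)+b_\beta(\sigma)}$ with $b_\beta$ the block-reset major index over $\widetilde{\mathsf{Des}}_G(\sigma)$, and the right side does become $\sum_{\sigma}q^{\inv_G(\sigma)+\nu_\beta(\sigma)}$ with $\nu_\beta$ the intra-value-block non-edge inversions. The gap is entirely in your ``equidistribution'' paragraph. Passing to $\sigma^{-1}$ turns $\widetilde{\inv}_G$ into $\inv_G$ as you say, but it does \emph{not} turn $b_\beta(\sigma)$ into $\nu_\beta(\sigma^{-1})$, nor into anything that a single global application of Kasraoui's theorem would handle; and your sentence ``both sides reduce to generating functions over $R(\beta)$'' is an assertion, not an argument. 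The obstacle you flag --- checking that interval graphs satisfy Kasraoui's hypothesis --- is not the real one: that is exactly Theorem~\ref{thm:MainFormula_beta_(n)} and is already granted.

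What is missing is the reduction of arbitrary $\beta$ to the case $\beta=(n)$, and the paper does this not by one global equidistribution but by \emph{fibering} the left-hand sum over the word $\sigma_\beta\in R(\beta)$ that records, for each value $k\in[n]$, the index of the \emph{position}-block of $\beta$ in which $k$ sits. Over a fixed fiber $\sigma_\beta=w$ the sets $S_i=\{k:w_k=i\}$ are fixed, the exponent splits as $\inv_G(w)+\sum_i\bigl(\eta(\gamma^i(\alpha_G(\sigma),\beta))+\widetilde{\inv}_G(\sigma,S_i)\bigr)$, and the permutations in the fiber are exactly the independent choices of an ordering of each $S_i$. Each factor $\sum_{\tau}q^{\eta(\alpha_{G(S_i)}(\tau))+\widetilde{\inv}_{G(S_i)}(\tau)}$ is then the case $\beta=(|S_i|)$ for the induced subgraph $G(S_i)$, which is again an interval graph, so Kasraoui gives $[\beta_i]_q!$. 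Summing over $w$ yields the right side. The two ingredients you did not supply are (i) this fiber-and-factor decomposition over position-blocks (your inversion $\sigma\mapsto\sigma^{-1}$ never gets there), and (ii) the observation that every induced subgraph $G(S_i)$ of an interval graph is itself interval, which is what licenses the repeated use of the $\beta=(n)$ case.
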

To prove this theorem, we first prove the special case $\beta=(n)$, and then we see how to reduce the general case to this one.

For $\beta=(n)$, we have $\gamma(\alpha,\beta)=\alpha$, so that \[\eta(\gamma(\alpha,\beta))=\eta(\alpha)=\sum_{i=1}^{\ell(\alpha)-1}\sum_{j=1}^i\alpha_j,\]
hence if for every $\sigma\in \mathfrak{S}_n$ we set
\[ \widetilde{\maj}_{G^c}(\sigma):=\eta(\alpha_G(\sigma))=\sum_{i\in \widetilde{\mathsf{Des}}_G(\sigma)}i ,\]
then \eqref{eq:MainFormula} reduces to the following identity.
\begin{theorem} \label{thm:MainFormula_beta_(n)}
For every interval graph $G=([n],E)\in \mathcal{IG}_n$ we have
\[\sum_{\sigma\in\mathfrak{S}_n}q^{\widetilde{\maj}_{G^c}(\sigma)+\widetilde{\inv}_G(\sigma)}=[n]_q!\ .\]
\end{theorem}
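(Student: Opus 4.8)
The plan is to recognize the left side as a principal specialization of the chromatic quasisymmetric function $\chi_G[X;q]$, for which Corollary~\ref{cor:chrom_fundamentals} gives a fundamental‑basis expansion, and then to evaluate that specialization by induction on $n$; the inductive step will rest on a telescoping identity (the one step that really needs an idea), which uses the clique structure of the neighbourhood of the largest vertex of an interval graph (Remark~\ref{rem:interval_clique}).

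Concretely, I would first specialize the alphabet to $x_k=q^{-(k-1)}$ ($k\ge1$) throughout. By Lemma~\ref{lem:L_ps} applied with $q$ replaced by $q^{-1}$ — using that its exponent $\eta(\alpha^r)$ equals $\sum_{j\in\mathsf{set}(\alpha)}(n-j)$, which is immediate from $\eta(\beta)=\sum_{s\in\mathsf{set}(\beta)}s$ and $\mathsf{set}(\alpha^r)=n-\mathsf{set}(\alpha)$ — one has $L_{n,S}[1,q^{-1},q^{-2},\dots]=q^{-\sum_{j\in S}(n-j)}\prod_{i=1}^{n}(1-q^{-i})^{-1}$, so Corollary~\ref{cor:chrom_fundamentals} yields
\[\prod_{i=1}^{n}(1-q^{-i})\cdot\chi_G[1,q^{-1},q^{-2},\dots;q]=\sum_{\tau\in\mathfrak{S}_n}q^{\,\mathsf{coinv}_G(\tau)-\sum_{j\in\mathsf{Des}_G(\tau^{-1})}(n-j)}.\]
On the other hand $\widetilde{\inv}_G(\sigma)=\mathsf{coinv}_G(\overline{\sigma}^{-1})$ by \eqref{eq:invtilde}, while $\widetilde{\maj}_{G^c}(\sigma)=\sum_{i\in\widetilde{\mathsf{Des}}_G(\sigma)}i=\binom{n}{2}-\sum_{j\in\mathsf{Des}_G(\overline{\sigma})}(n-j)$ by \eqref{eq:Destilde}; after the change of variable $\tau=\overline{\sigma}^{-1}$ (a bijection of $\mathfrak{S}_n$) this gives
\[\sum_{\sigma\in\mathfrak{S}_n}q^{\,\widetilde{\maj}_{G^c}(\sigma)+\widetilde{\inv}_G(\sigma)}=q^{\binom{n}{2}}\prod_{i=1}^{n}(1-q^{-i})\cdot\chi_G[1,q^{-1},q^{-2},\dots;q].\]
Thus the theorem is equivalent to the single identity $\chi_G[1,q^{-1},q^{-2},\dots;q]=q^n/(q-1)^n$, i.e., by homogeneity of $\chi_G$, to $\chi_G[q^{-1},q^{-2},q^{-3},\dots;q]=(q-1)^{-n}$ for every $G\in\mathcal{IG}_n$.

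Next I would prove this specialization formula by induction on $n$, starting from the definition $\chi_G[X;q]=\sum_{\kappa\in\mathsf{PC}(G)}q^{\mathsf{coinv}_G(\kappa)}x_\kappa$. Deleting the vertex $n$ produces an interval graph $G':=G|_{[n-1]}\in\mathcal{IG}_{n-1}$; writing $\kappa'=\kappa|_{[n-1]}$, $C_n=\{i<n:\{i,n\}\in E\}$, and using that a proper colouring of $G$ is a proper colouring $\kappa'$ of $G'$ together with a colour $c=\kappa(n)\notin\{\kappa'(i):i\in C_n\}$, with $\mathsf{coinv}_G(\kappa)=\mathsf{coinv}_{G'}(\kappa')+\#\{i\in C_n:\kappa'(i)<c\}$, the generating function factors. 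Specializing $x_c=q^{-c}$ and invoking Remark~\ref{rem:interval_clique} (so that $\kappa'$ is injective on the clique $C_n$, with values $v_1<\dots<v_d$; put $v_0=0$, $v_{d+1}=\infty$), the sum over the colour of $n$ becomes
\[\sum_{\substack{c\ge1\\ c\notin\{v_1,\dots,v_d\}}}q^{-c}\,q^{\#\{l:\,v_l<c\}}=\sum_{l=0}^{d}q^{l}\sum_{v_l<c<v_{l+1}}q^{-c}=\frac{q^{-1}}{1-q^{-1}}=\frac{1}{q-1},\]
independently of $d$ and of the $v_l$. Hence $\chi_G[q^{-1},q^{-2},\dots;q]=\tfrac{1}{q-1}\,\chi_{G'}[q^{-1},q^{-2},\dots;q]$, and the induction (base case $n=0$, where both sides are $1$) gives $\chi_G[q^{-1},q^{-2},\dots;q]=(q-1)^{-n}$. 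Substituting back, $\sum_{\sigma}q^{\widetilde{\maj}_{G^c}(\sigma)+\widetilde{\inv}_G(\sigma)}=q^{\binom{n}{2}}\prod_{i=1}^{n}(1-q^{-i})\cdot q^n/(q-1)^n=\prod_{i=1}^{n}\tfrac{q^i-1}{q-1}=[n]_q!$, an elementary simplification.

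The conceptual crux is the reduction in the second paragraph — noticing that Corollary~\ref{cor:chrom_fundamentals} together with a principal specialization turns the problem into evaluating $\chi_G$ at the alphabet $\{q^{-k}\}_{k\ge1}$ — and the single computational point requiring an idea is the telescoping collapse of the inner sum. That collapse works precisely because $C_n$ is a clique in an interval graph; for a general graph the inner sum genuinely depends on $\kappa'$ and the evaluation breaks down, which is consistent with $\widetilde{\maj}_{G^c}+\widetilde{\inv}_G$ failing to be Mahonian outside $\mathcal{IG}_n$ (as the small examples already indicate). Everything else — matching Lemma~\ref{lem:L_ps} to the chosen specialization, the bookkeeping with $\overline{\sigma}$, $\tau=\overline{\sigma}^{-1}$ and the factor $q^{\binom{n}{2}}$, and the final cancellation — is routine. (An alternative, closer to the structure announced for Sections~8 and 9, would be to exhibit $\widetilde{\maj}_{G^c}+\widetilde{\inv}_G$ as an instance of Kasraoui's classification of Mahonian maj–inv statistics; the argument sketched here is self-contained and, as a byproduct, establishes the principal specialization $\chi_G[q^{-1},q^{-2},\dots;q]=(q-1)^{-n}$.)
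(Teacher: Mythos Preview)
Your argument is correct and takes a genuinely different route from the paper. The paper does not prove Theorem~\ref{thm:MainFormula_beta_(n)} directly; it cites Kasraoui and then, in Section~\ref{sec:Foata}, reproduces Kasraoui's modified Foata bijection $\varphi_G:\mathfrak{S}_n\to\mathfrak{S}_n$ with $\inv(\varphi_G(\sigma))=\widetilde{\inv}_G(\sigma)+\widetilde{\maj}_{G^c}(\sigma)$, so the statistic is Mahonian by transport of structure. Your approach is instead algebraic: you recognise the sum as $q^{\binom{n}{2}}\prod_i(1-q^{-i})\cdot\chi_G[1,q^{-1},q^{-2},\dots;q]$ via Corollary~\ref{cor:chrom_fundamentals} and Lemma~\ref{lem:L_ps}, and then evaluate the principal specialisation $\chi_G[q^{-1},q^{-2},\dots;q]=(q-1)^{-n}$ by deleting the top vertex and exploiting the clique structure of its down-neighbourhood (Remark~\ref{rem:interval_clique}) to make the inner sum over the new colour telescope to $1/(q-1)$ independently of $\kappa'$.

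Both proofs hinge on the same structural fact about interval graphs (the clique property of the lower neighbourhood), but use it very differently: Kasraoui needs it to control the block moves in the Foata-type bijection (cf.\ Remark~\ref{remark:order_P}), while you need it to guarantee that $\kappa'$ is injective on $C_n$ so that the telescoping goes through. Your argument is self-contained within the paper's earlier sections (no circularity: Corollary~\ref{cor:chrom_fundamentals} is independent of Section~\ref{sec:key_formula}) and yields the principal specialisation $\chi_G[q^{-1},q^{-2},\dots;q]=(q-1)^{-n}$ as a pleasant byproduct. The paper's approach, on the other hand, is bijective and therefore gives more: it explains \emph{why} the statistic is equidistributed with $\inv$, and the same bijection underlies the more refined Theorem~\ref{thm:MainFormula}. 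Your method would not obviously extend to prove Theorem~\ref{thm:MainFormula} for general $\beta$ without further work, whereas the paper reduces the general $\beta$ case to $\beta=(n)$ by a factorisation argument and then invokes the Foata-type result.
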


It turns out that this identity is a reformulation of a theorem in \cite{Kasraoui_maj-inv} (cf. \cite{Kasraoui_maj-inv}*{Corollary~1.11}). Since it is not immediate to see how the theorem in \cite{Kasraoui_maj-inv} translates into our statement, we will explain in the next section how the bijective proof goes. In this section we show how this theorem implies the general formula.

\begin{proof}[Proof of Theorem~\ref{thm:MainFormula}]
	Let $\beta\vDash n$, $\beta=(\beta_1,\beta_2,\dots,\beta_r)$ so that $r=\ell(\beta)$. We will use the notation $\llbracket a,b\rrbracket:=\{a,a+1,\dots,b-1,b\}$. For a fixed $\sigma\in\mathfrak{S}_n$ and for every $i\in [r]$, let 
\[S_i^{\sigma,\beta}=S_i:=\left\{\sigma(j)\mid j\in \left\llbracket 1+\sum_{k=1}^{i-1}\beta_k,\sum_{k=1}^{i}\beta_k\right\rrbracket\right\},\]
and let $\sigma_\beta$ be the element in $R(\beta)$ obtained by replacing in the word $123\cdots n$ every element in $S_i$ with an $i$, for every $i\in [r]$. 

For example, if $\beta=(1,2,1,3)\vDash 7$, so that $r=\ell(\beta)=4$, and $\sigma=4163275\in \mathfrak{S}_7$, then $S_1=\{4\}$, $S_2=\{1,6\}$, $S_3=\{3\}$, $S_4=\{2,5,7\}$, and $\sigma_\beta=2431424\in R(\beta)$.

Now call $\widetilde{\inv}_G(\sigma,S_i)$ the contribution to $\widetilde{\inv}_G(\sigma)$ of the subword of $\sigma$ consisting of the letters in $S_i$. 

In the previous example, if $G=([7],\{(1,2),(1,3),(1,4),(2,3),(3,4),(4,5),(4,6),(5,6),(5,7)\})$, then
$\widetilde{\inv}_G(\sigma,S_1)=\widetilde{\inv}_G(\sigma,S_2)=\widetilde{\inv}_G(\sigma,S_3)=0$, while $\widetilde{\inv}_G(\sigma,S_4)=1$ (here $(5,7)$ is contributing).

Then the observation is that
\[ \widetilde{\inv}_G(\sigma)= \inv_G(\sigma_\beta)+\sum_{i=1}^r\widetilde{\inv}_G(\sigma,S_i^{\sigma,\beta}). \]
Indeed the contributions to $\widetilde{\inv}_G(\sigma)$ not counted in the summation in the right hand side correspond precisely to the pairs $\{\sigma(i),\sigma(j)\}\in E$ such that $1\leq i<j\leq n$, $\sigma(i)>\sigma(j)$, $\sigma(i)\in S_k$ and $\sigma(j)\in S_h$, so that $k<h$. Now such a pair gives a $k$ in $\sigma_\beta$ occurring in position $\sigma(i)$ and an $h$ in position $\sigma(j)$. Hence all these pairs correspond precisely to the ones counted by $\inv_G(\sigma_\beta)$.

In our example, $\inv_G(\sigma_\beta)=4$ (here $(1,4),(2,3),(3,4)$ and $(5,6)$ are contributing), and indeed 
\[\widetilde{\inv}_G(\sigma)=5=4+0+0+0+1=\inv_G(\sigma_\beta)+\widetilde{\inv}_G(\sigma,S_1)+\widetilde{\inv}_G(\sigma,S_2)+\widetilde{\inv}_G(\sigma,S_3)+\widetilde{\inv}_G(\sigma,S_4)\]

Therefore
\begin{align*}
& \sum_{\alpha\vDash n}q^{\eta(\gamma(\alpha,\beta))}\mathop{\sum_{\sigma\in\mathfrak{S}_n}}_{\alpha^G(\sigma)=\alpha}q^{\widetilde{\inv}_G(\sigma)}=\\
& =\sum_{\sigma\in\mathfrak{S}_n}q^{\eta(\gamma(\alpha_G(\sigma),\beta))+\widetilde{\inv}_G(\sigma)}\\
& =\sum_{\sigma\in\mathfrak{S}_n}q^{\inv_G(\sigma_\beta)+\sum_{i=1}^r\eta(\gamma^i(\alpha_G(\sigma),\beta))+\sum_{i=1}^r\widetilde{\inv}_G(\sigma,S_i^{\sigma,\beta})}\\
& =\sum_{w\in R(\beta)}q^{\inv_G(w)}\mathop{\sum_{\sigma\in\mathfrak{S}_n}}_{\sigma_\beta=w}q^{\sum_{i=1}^r\left[\eta(\gamma^i(\alpha_G(\sigma),\beta))+ \widetilde{\inv}_G(\sigma,S_i^{\sigma,\beta})\right]}.
\end{align*}
Now notice that in the internal sum, fixing $\sigma_\beta=w$ determines the $S_i^{\sigma,\beta}$, so that the permutations $\sigma\in \mathfrak{S}_n$ such that $\sigma_\beta=w$ are simply obtained by permuting in any way the elements of $S_1$ and placing them in the first $\beta_1=|S_1|$ positions, then doing the same with the elements of $S_2$ and putting those in the following $\beta_2$ positions, etc. Observe also that the statistic $\eta(\gamma^i(\alpha_G(\sigma),\beta))+ \widetilde{\inv}_G(\sigma,S_i^{\sigma,\beta})$ corresponds to the statistic $\eta(\alpha_{G(S_i)}(\tau))+\widetilde{\inv}_{G(S_i)}(\tau)$ on the permutations $\tau$ of elements of $S_i$ with respect to the graph $G(S_i)$ obtained from $G$ by taking the subgraph on the subset $S_i\subseteq [n]$ of vertices of $G$. Therefore the internal sum of the formula we just computed is indeed a product of formulas of the original type, but with $\beta=(n)$. Finally, it is easy to check that the $G(S_i)$ are still interval graphs (up to the obvious monotone relabelling of the vertices), hence we can apply Theorem~\ref{thm:MainFormula_beta_(n)} to finally get
\begin{align*}
& \sum_{w\in R(\beta)}q^{\widetilde{\inv}_G(w)}\mathop{\sum_{\sigma\in\mathfrak{S}_n}}_{\sigma_\beta=w}q^{\sum_{i=1}^r\left[\eta(\gamma^i(\alpha_G(\sigma),\beta))+ \widetilde{\inv}_G(\sigma,S_i^{\sigma,\beta})\right]}=\\
& =[\beta_1]_q! [\beta_2]_q!\cdots [\beta_{r}]_q!\sum_{w\in R(\beta)}q^{\widetilde{\inv}_G(w)}.
\end{align*}
\end{proof}

\section{A modified Foata bijection} \label{sec:Foata}

In this section we briefly describe how Kasraoui proved Theorem~\ref{thm:MainFormula_beta_(n)} in \cite{Kasraoui_maj-inv}, using our notation.

\smallskip

Let $G=([n],E)\in\mathcal{IG}_n$ be an interval graph. We describe a Foata-like bijection $\varphi_G$ 
such that $\inv(\varphi_G(\sigma))=\widetilde{\inv}_G(\sigma)+\widetilde{\maj}_{G^c}(\sigma)$ for every $\sigma\in \mathfrak{S}_n$.

The map $\varphi_G$ is described in terms of an auxiliary map $\gamma^G_x$ with $x\in [n]$, defined on the set of words $w$ in the alphabet $[n]$. First we define two sets \[R_x^w:=\{j\in [\ell(w)]\mid w_j>x\text{ and }\{x,w_j\}\notin E\}\quad \text{ and }\quad L_x^w:=\{j\in [\ell(w)]\mid w_j\leq x\text{ or }\{w_j,x\}\in E\},\]
where $\ell(w)$ is the \emph{length} of the word $w$, i.e.\ the number of its letters.

Notice that the sets $R_x^w$ and $L_x^w$ depend on $G$, even if it does not appear in the notation.
\begin{remark}\label{remark:order_P}
In our situation, for any $x\in [n]$ and for any word $w$, given $i\in R_x^w$ and $j\in L_x^w$, we must have $w_j<w_i$: $i\in R_x^w$ implies $x<w_i$ and $\{x,w_i\}\notin E$; if $w_j\geq w_i$, then $\{x,w_j\}\notin E$ since $G$ is an interval graph, but $j\in L_x^w$, hence necessarily $w_j<x$, which contradicts $x<w_i$. Therefore we must have $w_j< w_i$.
\end{remark}

We define $\gamma^G_x(\epsilon)=\epsilon$, where $\epsilon$ is the only word of lenght $0$, i.e.\ the empty word. 
Assume now that $\ell(w)>0$, where $w_{\ell(w)}$ is its rightmost element. In order to define $\gamma_x^G(w)$ we study two cases:
\begin{itemize}
	\item[a)]  $\ell(w)\in R_x^{w}$;
	\item[b)]  $\ell(w)\in L_x^{w}$.
\end{itemize} 

In case $a)$ we decompose $w$ as follows:
\begin{equation}\label{eq:decomposition}
	w=v^1x_{1}v^2x_{2}\ldots v^kx_k\qquad \text{ with }x_k:=w_{\ell(w)},
\end{equation}

where $x_1,\ldots,x_{k-1},x_k\in \{w_j\mid j\in R_x^{w}\}$ and $v^1,\ldots,v^k$ are (possibly empty) words in $\{w_j\mid j\in L_x^{w}\}$. We notice that the
decomposition in \eqref{eq:decomposition} is unique. We now define $\gamma_x^G(w)=x_1v^1x_2v^2\ldots x_kv^k$.

The situation in case $b)$ is the same except that we are changing the roles of the values of $R_x^w$ and $L_x^w$. We therefore decompose $w=v^1x_{1}v^2x_{2}\ldots v^kx_k$ with $x_k:=w_{\ell({w})}$, where now $x_1,\ldots,x_{k-1},x_k\in \{w_j\mid j\in L_x^{w}\}$ and $v^1,\ldots,v^k$ are (possibly empty) words in $\{w_j\mid j\in R_x^{w}\}$. We define again $\gamma_x^G(w)=x_1v^1x_2v^2\ldots x_kv^k$.

Now $\varphi_G$ is defined recursively as $\varphi_{G}(wx)=\gamma_x^G(\varphi_G(w))x$. So that, if $w=w_1w_2\cdots w_n$, then
\[ \varphi_G(w)=\gamma_{w_n}^G(\gamma_{w_{n-1}}^G(\cdots \gamma_{w_2}^G(\gamma_{w_1}^G(\epsilon)w_1)w_2)\cdots )w_{n-1})w_n. \]

We have the following result (the notation $\inv$ is explained in Remark~\ref{rmk:inv_for_Kn}).
\begin{proposition}[Kasraoui]\label{prop:foata_mod}
	If $G=([n],E)$ is an interval graph, then
	\[
	\varphi_G|_{\mathfrak{S}_n}:\mathfrak{S}_n\rightarrow \mathfrak{S}_n
	\]
	is a bijection such that for every $\sigma\in \mathfrak{S}_n$ we have  \[\inv(\varphi_G(\sigma))=\widetilde{\inv}_G(\sigma)+\widetilde{\maj}_{G^c}(\sigma).\]
\end{proposition}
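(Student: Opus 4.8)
The plan is to prove both assertions simultaneously by induction on the length of the word to which $\varphi_G$ is applied, using the recursion $\varphi_G(wx)=\gamma_x^G(\varphi_G(w))x$. Throughout I extend the statistics $\widetilde{\inv}_G$ and $\widetilde{\maj}_{G^c}$ verbatim to arbitrary words in $[n]$ (the defining formulas make sense as written), so that for a permutation $\sigma=\sigma_1\cdots\sigma_n$ I may speak of $\widetilde{\inv}_G(\sigma_1\cdots\sigma_k)$ and $\widetilde{\maj}_{G^c}(\sigma_1\cdots\sigma_k)$ for prefixes. The base case is immediate: $\varphi_G(\epsilon)=\epsilon$, $\varphi_G(w_1)=w_1$, and $\inv$, $\widetilde{\inv}_G$, $\widetilde{\maj}_{G^c}$ all vanish there.

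For bijectivity I would first check that for each value $x$ and each content the map $\gamma_x^G$ is a bijection from words of that content to themselves. The decomposition \eqref{eq:decomposition} is unique, and in the image word $\gamma_x^G(w)=x_1v^1x_2v^2\cdots x_kv^k$ the blocks are recoverable: in case a) each $R$-letter starts a new block (followed by all subsequent $L$-letters), in case b) each $L$-letter does; moreover the first letter of $\gamma_x^G(w)$ is $x_1$, and whether it lies in $R_x^{\,\cdot}$ or $L_x^{\,\cdot}$ (sets that depend only on the content, which is preserved) tells us which case occurred. Reversing each block $x_iv^i\mapsto v^ix_i$ inverts $\gamma_x^G$. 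Since the last letter of $\varphi_G(\sigma)$ is $\sigma_n$, one reads off $\sigma_n$, applies $(\gamma_{\sigma_n}^G)^{-1}$ to the prefix, and invokes the inductive hypothesis; hence $\varphi_G|_{\mathfrak{S}_n}$ is injective, and being a self-map of a finite set it is a bijection.

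The heart of the argument is the statistic identity, and it rests on the local lemma
\[ \inv\bigl(\gamma_x^G(u)\bigr)=\inv(u)+\ell(u)\cdot[\ell(u)\in R_x^u]-|R_x^u| \]
valid for any word $u$. To prove it, write $u=v^1x_1\cdots v^kx_k$ as in \eqref{eq:decomposition} (case a); case b) is symmetric, with $R$ and $L$ interchanged. The cyclic shift $v^ix_i\mapsto x_iv^i$ leaves unchanged the inversions among the letters of $v^i$ and the inversions between letters of distinct blocks (each block occupies the same range of positions before and after). By Remark~\ref{remark:order_P} every letter of $v^i$ is strictly smaller than $x_i$, so moving $x_i$ to the front of its block converts exactly $|v^i|$ non-inversions into inversions; summing over blocks yields the net change $\sum_i|v^i|=\ell(u)-|R_x^u|$ (in case b) each block instead loses $|v^i|$ inversions, and $\sum_i|v^i|=|R_x^u|$). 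Now in the inductive step set $u=\varphi_G(\sigma_1\cdots\sigma_{n-1})$ and $x=\sigma_n$. Appending $x$ to $\gamma_x^G(u)$ creates $\#\{i<n:\sigma_i>\sigma_n\}=a+|R_x^u|$ new inversions, where $a:=\#\{i<n:\{\sigma_i,\sigma_n\}\in E,\ \sigma_i>\sigma_n\}=\widetilde{\inv}_G(\sigma_1\cdots\sigma_n)-\widetilde{\inv}_G(\sigma_1\cdots\sigma_{n-1})$; and since the last letter of $u$ is $\sigma_{n-1}$ and $\ell(u)=n-1$, the term $\ell(u)\cdot[\ell(u)\in R_x^u]$ equals $(n-1)\cdot[\sigma_{n-1}>\sigma_n\text{ and }\{\sigma_{n-1},\sigma_n\}\notin E]=\widetilde{\maj}_{G^c}(\sigma_1\cdots\sigma_n)-\widetilde{\maj}_{G^c}(\sigma_1\cdots\sigma_{n-1})$. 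Adding these to the local lemma shows that the increment of $\inv$ from $\varphi_G(\sigma_1\cdots\sigma_{n-1})$ to $\varphi_G(\sigma_1\cdots\sigma_n)$ coincides with the increment of $\widetilde{\inv}_G+\widetilde{\maj}_{G^c}$, and the inductive hypothesis closes the argument.

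The main obstacle is the careful bookkeeping in the local lemma: one must verify that cyclic shifts do not disturb inter-block inversions and that the interval-graph hypothesis enters \emph{only} through Remark~\ref{remark:order_P} (without it a block $v^i$ could contain letters exceeding $x_i$, and the $\pm|v^i|$ count would fail). Beyond that, the only care needed is with degenerate configurations — empty blocks $v^i$, and words all of whose letters lie in $R_x^u$ or all in $L_x^u$ — which one checks do not alter any of the formulas above.
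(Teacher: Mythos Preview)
Your proof is correct and follows essentially the same strategy as the paper's: both argue by induction via the recursion $\varphi_G(wx)=\gamma_x^G(\varphi_G(w))x$, both invert $\gamma_x^G$ by reading the case off the first letter of the image, and both compute the inversion increment using Remark~\ref{remark:order_P} to control the effect of the cyclic shifts within blocks. The only cosmetic difference is that you package cases a) and b) into the single local formula $\inv(\gamma_x^G(u))=\inv(u)+\ell(u)\cdot[\ell(u)\in R_x^u]-|R_x^u|$, whereas the paper treats the two cases separately; the resulting increments $\inv(w^i)+i+a$ and $\inv(w^i)+a$ match yours line by line.
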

Before recalling a proof of this proposition we illustrate how the map $\varphi_G$ works with an example.

\begin{example} \label{ex:Foata_interval}
	Let $n=6$ and $G=([6],\{(2,3),(2,4),(3,4)\})\in \mathcal{DG}_6\subset \mathcal{IG}_6$.
	We take $\sigma=512463\in \mathfrak{S}_6$ and notice that $\widetilde{\maj}_{G^c}(\sigma)=6$ (since $\widetilde{\mathsf{Des}}_G(\sigma)=\{1,5\}$) and $\widetilde{\inv}_G(\sigma)=1$ (the inversion is given by $(3,4)$).
	
	By definition
	\[\varphi_G(\sigma)=\gamma_3^G(\gamma_6^G(\gamma_4^G(\gamma_2^G(\gamma_1^G(\gamma_5^G(\epsilon)5)1)2)4)6)3. \]
	Hence we are going to build $\varphi_G(\sigma)$ in $6$ steps. The output of the $i$-th step is a word in $\{\sigma(1),\ldots,\sigma(i)\}$ (each value appearing exactly once) such that the rightmost value is $\sigma(i)$. 
	The first two steps are trivial:
	\[
	|5\rightarrow |5\rightarrow 5=\gamma_5^G(\epsilon)5\]
	\[
	|5|1\rightarrow |5|1 \rightarrow 51=\gamma_1^G(5)1.\]
	The third step can be more interesting. As we take in $\sigma(3)=2$ we compare it with $\sigma(2)=1$ and notice that $1\ngtr 2$. Therefore we draw a bar after every other element $z$ that is not greater than $2$ or such that $\{z,2\}\in E$, and we add always a bar at the end to the left (we will always do this later on without further notice). We now cyclically permute the elements in every block of numbers delimited by two consecutive bars, moving the rightmost element of the block in the leftmost position of the same block: in this particular case we are just permuting $1$ and $5$, i.e.
	\[|51|2\rightarrow |15|2\rightarrow 152=\gamma_2^G(51)2.\]
	
	Now to add $\sigma(4)=4$ we compare it with $\sigma(3)=2$ and find out $\{2,4\}\in E$. Again we draw a bar after every value $z$ that is not greater than $4$ or such that $\{z,4\}\in E$ and permute cyclically the elements in every block:
	$$
	|1|52|4\rightarrow |1|25|4\rightarrow 1254=\gamma_4^G(152)4.
	$$
	We proceed by comparing $4$ and $6$. Because $4\ngtr 6$ we draw a bar after every element $z$ that is not greater than $6$ or such that $\{z,6\}\in E$:
	$$
	|1|2|5|4|6 \rightarrow |1|2|5|4|6\rightarrow 12546=\gamma_6^G(1254)6.
	$$
	We conclude comparing $6$ and $3$. Because $6>3$ and $\{3,6\}\notin E$, we draw a bar after every element $z$ that is greater than $3$ and such that $\{3,z\}\notin E$. We obtain:
	\[
	|125|46|3 \rightarrow |512|64|3 \rightarrow 512643=\gamma_3^G(12546)3.
	\] 
	We finally obtained $\varphi_G(\sigma)=512643$. Notice that 
	\[\inv(\varphi_G(\sigma))=7=1+6=\widetilde{\inv}_G(\sigma)+\widetilde{\maj}_{G^c}(\sigma).\]
\end{example}
\begin{remark}
In the case of the graph $G=([n],\varnothing)$ with no edges, $\widetilde{\inv}_{G}\equiv 0$ while $\widetilde{\mathsf{maj}}_{G}(\sigma)=\mathsf{maj}(\sigma)=\sum_{i\in \mathsf{Des}(\sigma)}i$ is the usual \emph{major index} of a permutation $\sigma\in \mathfrak{S}_n$, and in this case $\varphi_{G}$ is the usual Foata bijection. After translating the notations, Proposition~\ref{prop:foata_mod} is a corollary of \cite{Kasraoui_maj-inv}*{Theorem~1.6}. In fact from \cite{Kasraoui_maj-inv}*{Corollary~1.11} we know that the interval graphs are precisely the graphs $G=([n],E)$ such that $\widetilde{\inv}_G+\widetilde{\maj}_{G^c}$ is a Mahonian statistic.
\end{remark}

\begin{proof}[Proof of Proposition \ref{prop:foata_mod}]
To simplify the notation, given a permutation $\sigma\in \mathfrak{S}_n$, we denote $\sigma(i)$ by $\sigma_i$ for every $i\in [n]$. Now notice that $\varphi_G(\sigma)$ is obtained recursively as follows:
	\[
	\varphi_G(\sigma)=\gamma^G_{\sigma_n}\left( \gamma^G_{\sigma_{n-1}}\left(\ldots\left(\gamma_{\sigma_2}^G(\gamma_{\sigma_1}^G(\epsilon)\sigma_1)\sigma_2\right)\ldots\right) \sigma_{n-1}\right)\sigma_n.
	\]
	We set $w^i=\gamma^G_{\sigma_{i}}\left(\ldots\left(\gamma_{\sigma_2}^G(\gamma_{\sigma_1}^G(\epsilon)\sigma_1)\sigma_2\right)\ldots\right) \sigma_{i}$ and notice that $\ell(w^i)=i$ and that the $i$ elements appearing in $w_i$ are precisely $\sigma_1,\ldots,\sigma_i$; furthermore, the rightmost element in $w^i$ is $\sigma_i$.
	
	Since the rightmost element in $w^i$ is $\sigma_i$, we can recover $w^{i-1}$ starting from $w^{i}$: we compare the leftmost letter of $w^i$ with $\sigma_i$, from which we can recover the position of the bars giving the blocks (cf.\ Example~\ref{ex:Foata_interval}), and hence we can perform inside each block the inverse cyclic permutation (bringing the leftmost element of each block in the rightmost place of the block), in order to recover $w^{i-1}$. Hence recursively we can recover $\sigma$ from $\varphi_G(\sigma)$. This shows the bijectivity of $\varphi_G|_{\mathfrak{S}_n}$.

	To show that $\inv(\varphi_G(\sigma))=\widetilde{\inv}_G(\sigma)+\widetilde{\maj}_{G^c}(\sigma)$ we prove the following:
	
	\noindent a) if $\sigma_{i}>\sigma_{i+1}$ and $\{\sigma_i,\sigma_{i+1}\}\notin E$, then \[\inv(w^{i+1})=\inv(\gamma^G_{\sigma_{i+1}}(w^i)\sigma_{i+1})=\inv(w^{i})+i+|\{j\in [i]\mid w_j^i>\sigma_{i+1}\text{ and } \{\sigma_{i+1},w^i_j\}\in E\}|;\]
	
	\noindent b) if $\sigma_{i}< \sigma_{i+1}$ or $\{\sigma_i,\sigma_{i+1}\}\in E$ then \[\inv(w^{i+1})=\inv(\gamma^G_{\sigma_{i+1}}(w^i)\sigma_{i+1})= \inv(w^i)+|\{j\in [i]\mid w_j^i>\sigma_{i+1}\text{ and }  \{\sigma_{i+1},w^i_j\}\in E\}|.\]
		
	We discuss the case $a)$ first. Starting from $w^i=w_1^iw_2^i\cdots w_i^i$, to obtain $\gamma^G_{\sigma_{i+1}}(w^i)$ we decompose $w^i=v^1x_1\cdots v^{k-1}x_{k-1}v^kx_k$ with $x_k:=\sigma_{i}$, where for every $j\in [k]$ we have $x_j> \sigma_{i+1}$ and $\{x_j,\sigma_{i+1}\}\notin E$, and $v^j$ is a word in elements $z$ that are not greater than $\sigma_{i+1}$ or $\{z,\sigma_{i+1}\}\in E$. By Remark~\ref{remark:order_P} we notice that for every $j\in [k]$, every letter in $v^j$ is smaller than $x_j$, therefore $\inv(v^jx_j)=\inv(x_jv^j)+\ell(v^j)$: we deduce that
	\begin{align*}
		\inv(\gamma^G_{\sigma_{i+1}}(w^i)\sigma_{i+1})&=\inv(\gamma^G_{\sigma_{i+1}}(w^i))+|R^{w^i}_{\sigma_{i+1}}|+|\{j\in [i]|w_j^i>\sigma_{i+1}\text{ and } \{\sigma_{i+1},w^i_j\}\in E \}|\\
		&=\inv(w^i)+ \sum_{i=1}^{k}\ell(v^i)+|R^{w^i}_{\sigma_{i+1}}| +|\{j\in [i]|w_j^i>\sigma_{i+1}\text{ and } \{\sigma_{i+1},w^i_j\}\in E \}|\\
		&=\inv(w^i)+ |L^{w^i}_{\sigma_{i+1}}|+|R^{w^i}_{\sigma_{i+1}}| +|\{j\in [i]|w_j^i>\sigma_{i+1}\text{ and } \{\sigma_{i+1},w^i_j\}\in E \}|\\
		&=\inv(w^i)+i+|\{j\in [i]|w_j^i>\sigma_{i+1}\text{ and } \{\sigma_{i+1},w^i_j\}\in E \}|.
	\end{align*}

	For the case $b)$, we start again by computing $\gamma^G_{\sigma_{i+1}}(w^i)$, hence decomposing $w^i=v^1x_1\ldots v^kx_k$ with $x_k:=\sigma_{i}$, where for every $j\in [k]$ we have $x_j\ngtr  \sigma_{i+1}$ or $\{x_j,\sigma_{i+1}\}\in E$, and $v^j$ is a word in elements $z$ that are greater than $\sigma_{i+1}$ and such that $\{z,\sigma_{i+1}\}\notin E$. Now Remark~\ref{remark:order_P} implies that for every $j\in [k]$ we have $\inv(v^jx_j)=\inv(x_jv^j)-\ell(v^j)$ because the letters of $v^j$ are all greater than $x_j$. We deduce that 
	\begin{align*}
		\inv(\gamma^G_{\sigma_{i+1}}(w^i)\sigma_{i+1})&=\inv(\gamma^G_{\sigma_{i+1}}(w^i))+|R^{w^i}_{\sigma_{i+1}}|+|\{j\in [i]|w_j^i>\sigma_{i+1}\text{ and } \{\sigma_{i+1},w^i_j\}\in E \}|\\
		&=\inv(w^i)-\sum_{i=1}^{k}\ell(v^i)+|R^{w^i}_{\sigma_{i+1}}|+|\{j\in [i]|w_j^i>\sigma_{i+1}\text{ and } \{\sigma_{i+1},w^i_j\}\in E \}|\\
		&=\inv(w^i)-|R^{w^i}_{\sigma_{i+1}}|+|R^{w^i}_{\sigma_{i+1}}|+|\{j\in [i]|w_j^i>\sigma_{i+1}\text{ and } \{\sigma_{i+1},w^i_j\}\in E \}|\\
		&=\inv(w^i)+|\{j\in [i]|w_j^i>\sigma_{i+1}\text{ and } \{\sigma_{i+1},w^i_j\}\in E \}|.
	\end{align*}

	We use the results from case $a)$ and case $b)$ to write:
	\begin{align*}
		\inv(\varphi_{G}(\sigma))=&\inv\left(\gamma^G_{\sigma_n}\left( \gamma^G_{\sigma_{n-1}}\left(\ldots\left(\gamma_{\sigma_2}^G(\gamma_{\sigma_1}^G(\epsilon)\sigma_1)\sigma_2\right)\ldots\right) \sigma_{n-1}\right)\sigma_n\right)\\
		=&\left(\sum_{\sigma_i>\sigma_{i+1}\text{ and }\{\sigma_i,\sigma_{i+1}\}\notin E}i\right)+\sum_{i=1}^{n-1}|\{j\in [i]|w_j^i>\sigma_{i+1}\text{ and } \{\sigma_{i+1},w^i_j\}\in E \}|\\
		=&\widetilde{\maj}_{G^c}(\sigma)+\widetilde{\inv}_{G}(\sigma).
	\end{align*}
\end{proof}

\section{The main identity}

In this section we show how an identity of Carlsson and Mellit \cite{Carlsson-Mellit-ShuffleConj-2015} proved for Dyck graphs extends to the case of interval graphs. This is the main result of the present article.

Recall from Section~\ref{sec:qsym} the involutions $\psi$ and $\rho$, the plethysm of quasisymmetric functions, and all the other notation for compositions.

\begin{theorem} \label{thm:main_theorem}
Let $G=([n],E)\in\mathcal{IG}_n$ be an interval graph. Then
\begin{equation} \label{eq:main_identity}
(1-q)^n \rho \left(\psi \chi_G\left[X\frac{1}{1-q}\right]\right)=\mathrm{LLT}_G[X;q].
\end{equation}
\end{theorem}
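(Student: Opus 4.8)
The plan is to compare, for each composition $\beta\vDash n$, the coefficient of $M_\beta[X]$ on the two sides; since the $M_\beta$ form a basis of $\mathrm{QSym}$ this is enough. On the right, the expansion is immediate: $\mathrm{LLT}_G[X;q]=\sum_{\kappa\in\mathsf{C}(G)}q^{\mathsf{inv}_G(\kappa)}x_\kappa$ is quasisymmetric (Theorem~\ref{thm:LLT_fundamental}), so its coefficient of $M_\beta$ equals its coefficient of $x_1^{\beta_1}\cdots x_{\ell(\beta)}^{\beta_{\ell(\beta)}}$, namely $\sum_{w\in R(\beta)}q^{\inv_G(w)}$; and Theorem~\ref{thm:MainFormula} rewrites this as $\tfrac{1}{[\beta_1]_q!\cdots [\beta_{\ell(\beta)}]_q!}\sum_{\sigma\in\mathfrak{S}_n}q^{\eta(\gamma(\alpha_G(\sigma),\beta))+\widetilde{\inv}_G(\sigma)}$.

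On the left I read $\rho\bigl(\psi\chi_G[X\tfrac1{1-q}]\bigr)$ as $\rho$ applied to the result of plethystically substituting $X\mapsto X\tfrac1{1-q}$ in the quasisymmetric function $\psi\chi_G$; the order of these three operations matters, since the plethysm depends on the order of the alphabet. First I would rewrite $\psi\chi_G$: by Remark~\ref{rmk:SW_notation} (a restatement of \cite{Shareshian_Wachs_Advances}*{Theorem~3.1}) together with $\mathsf{comp}(n-S)=\mathsf{comp}(S)^r$ one gets $\psi\chi_G[X;q]=\sum_{\sigma\in\mathfrak{S}_n}q^{\widetilde{\inv}_G(\sigma)}L_{\alpha_G(\sigma)^r}[X]$. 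Applying the plethysm term by term via Proposition~\ref{prop:L_Cauchy} with $Y=\tfrac1{1-q}$ and then Lemma~\ref{lem:L_ps} (which gives $L_\mu[\tfrac1{1-q}]=q^{\eta(\mu^r)}h_{|\mu|}[\tfrac1{1-q}]$, and $(1-q)^n\prod_l h_{\beta_l}[\tfrac1{1-q}]=\tfrac{1}{[\beta_1]_q!\cdots[\beta_{\ell(\beta)}]_q!}$), the coefficient of $M_\beta[X]$ in $(1-q)^n(\psi\chi_G)[X\tfrac1{1-q}]$ comes out as $\tfrac{1}{[\beta_1]_q!\cdots[\beta_{\ell(\beta)}]_q!}\sum_\sigma q^{\widetilde{\inv}_G(\sigma)+\sum_l\eta((\gamma^l(\alpha_G(\sigma)^r,\beta))^r)}$.

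Now apply $\rho$. Since $\rho(M_\beta[X])=M_{\beta^r}[X]$, the coefficient of $M_\beta[X]$ in $(1-q)^n\rho\bigl((\psi\chi_G)[X\tfrac1{1-q}]\bigr)$ equals the same expression with $\beta$ replaced throughout by $\beta^r$, that is $\tfrac{1}{[\beta_1]_q!\cdots[\beta_{\ell(\beta)}]_q!}\sum_\sigma q^{\widetilde{\inv}_G(\sigma)+\sum_l\eta((\gamma^l(\alpha_G(\sigma)^r,\beta^r))^r)}$. Here I would insert the elementary identity $\mu^r\wedge\nu^r=(\mu\wedge\nu)^r$, which upon unwinding the definition of $\gamma$ yields $\gamma^l(\mu^r,\nu^r)=\bigl(\gamma^{\ell(\nu)+1-l}(\mu,\nu)\bigr)^r$ for all $l$, hence $\sum_l\eta\bigl((\gamma^l(\mu^r,\nu^r))^r\bigr)=\sum_l\eta(\gamma^l(\mu,\nu))=\eta(\gamma(\mu,\nu))$. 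Applying this with $\mu=\alpha_G(\sigma)$ and $\nu=\beta$ turns the exponent into $\widetilde{\inv}_G(\sigma)+\eta(\gamma(\alpha_G(\sigma),\beta))$, so the coefficient of $M_\beta$ becomes exactly the expression obtained for the right-hand side. This completes the proof.

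The two substantial inputs are Theorem~\ref{thm:MainFormula} — whose proof runs through Kasraoui's modified Foata bijection in Section~\ref{sec:Foata} — and the careful tracking of three reversals of compositions: the $\alpha_G(\sigma)\mapsto\alpha_G(\sigma)^r$ produced by $\psi$, the $\mu\mapsto\mu^r$ produced by Lemma~\ref{lem:L_ps}, and the $\beta\mapsto\beta^r$ produced by $\rho$. I expect the main pitfall, besides Theorem~\ref{thm:MainFormula} itself, to be precisely this bookkeeping: one must respect that the plethysm is order-dependent, that $\psi$ is applied before and $\rho$ after the substitution, and that these three reversals have to combine — via the lemma on $\gamma$ — into the single ``untwisted'' statistic $\eta(\gamma(\alpha_G(\sigma),\beta))$ appearing in Theorem~\ref{thm:MainFormula}. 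The remaining ingredients — the $M$-expansion of $\mathrm{LLT}_G$, the restated formula for $\psi\chi_G$, and the identity $(1-q)^n h_n[\tfrac1{1-q}]=1/[n]_q!$ — are routine.
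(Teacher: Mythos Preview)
Your proof is correct and follows essentially the same route as the paper's: both expand $\psi\chi_G$ in fundamentals as $\sum_\sigma q^{\widetilde{\inv}_G(\sigma)}L_{\alpha_G(\sigma)^r}$, apply Proposition~\ref{prop:L_Cauchy} and Lemma~\ref{lem:L_ps} to the plethysm, use the reversal identity $\gamma^l(\mu^r,\nu^r)=(\gamma^{\ell(\nu)+1-l}(\mu,\nu))^r$ to untwist the exponent, and then invoke Theorem~\ref{thm:MainFormula} to match the $M_\beta$-coefficient of $\mathrm{LLT}_G$. The only cosmetic difference is that the paper re-indexes $\beta\to\beta^r$ before applying $\rho$ (so that $\rho$ simply strips the $^r$ off $M_{\beta^r}$), whereas you apply $\rho$ as ``replace $\beta$ by $\beta^r$ in the coefficient''; these are the same manipulation, and your explicit isolation of the three reversals is if anything clearer than the paper's silent passage from $\prod_i L_{\gamma^i(\alpha^r,\beta^r)}$ to $\prod_i L_{\gamma^i(\alpha,\beta)^r}$.
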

\begin{remark}
This is really an extension of \cite{Carlsson-Mellit-ShuffleConj-2015}*{Proposition~3.5}. Indeed, when $G$ is a Dyck graph, $\chi_G[X;q]$ is symmetric (by \cite{Shareshian_Wachs_Advances}*{Theorem~4.5}), the plethysm reduces to the usual plethysm of symmetric functions (cf.\ \cite{Loehr-Remmel-plethystic-2011}), $\rho$ fixes the symmetric functions while $\psi$ gives the usual $\omega$ involution of symmetric functions (cf.\ Remark~\ref{rmk:psi_restrict_to_omega}), and $\mathrm{LLT}_G[X;q]$ is precisely the unicellular LLT symmetric function corresponding to the Dyck graph $G$ (denoted $\chi(\pi)$ in \cite{Carlsson-Mellit-ShuffleConj-2015}, where $\pi$ refers to the Dyck path corresponding to the Dyck graph $G$). So, when $G$ is a Dyck graph, our \eqref{eq:main_identity} is just a rewriting of \cite{Carlsson-Mellit-ShuffleConj-2015}*{Proposition~3.5}.
\end{remark}
\begin{proof}
Using \eqref{eq:cor_chiG_coinv}, we have
\begin{align*}
\psi \chi_G\left[X\frac{1}{1-q}\right] & = \sum_{\sigma\in\mathfrak{S}_n}q^{\mathsf{coinv}_G(\sigma^{-1})}\psi L_{n,\mathsf{Des}_G(\sigma)}\left[X\frac{1}{1-q}\right]\\
& = \sum_{\sigma\in\mathfrak{S}_n}q^{\mathsf{coinv}_G(\sigma^{-1})}  L_{n,[n-1]\setminus \mathsf{Des}_G(\sigma)}\left[X\frac{1}{1-q}\right]\\
\text{(using \eqref{eq:Destilde})}& = \sum_{\sigma\in\mathfrak{S}_n}q^{\mathsf{coinv}_G(\sigma^{-1})}  L_{n,n- \widetilde{\mathsf{Des}}_G(\overline{\sigma})}\left[X\frac{1}{1-q}\right]\\
\text{(using \eqref{eq:alphaG_definition})}& = \sum_{\sigma\in\mathfrak{S}_n}q^{\mathsf{coinv}_G(\sigma^{-1})}  L_{\alpha_G(\overline{\sigma})^r}\left[X\frac{1}{1-q}\right].
\end{align*}
Setting (cf.\ \eqref{eq:invtilde})
\[c_{\alpha,P}(q):=\mathop{\sum_{\sigma\in\mathfrak{S}_n}}_{\alpha_G(\overline{\sigma})=\alpha}q^{\mathsf{coinv}_G(\sigma^{-1})}=\mathop{\sum_{\sigma\in\mathfrak{S}_n}}_{\alpha_G(\overline{\sigma})=\alpha}q^{\widetilde{\mathsf{inv}}_G(\overline{\sigma})},\]
and recalling the notations from Section~\ref{sec:key_formula}, we compute
\begin{align*}
& (1-q)^n \rho \left(\psi \chi_G\left[X\frac{1}{1-q}\right]\right)=\\
& =(1-q)^n\sum_{\sigma\in\mathfrak{S}_n}q^{\mathsf{coinv}_G(\sigma^{-1})}\rho\left(L_{\alpha_G(\sigma)^r}\left[X\frac{1}{1-q}\right]\right)\\
& =(1-q)^n\sum_{\alpha\vDash n} c_{\alpha,P}(q) \rho\left(L_{\alpha^r}\left[X\frac{1}{1-q}\right]\right)\\
\text{(using \eqref{eq:L_Cauchy})}& =(1-q)^n\sum_{\alpha\vDash n} c_{\alpha,P}(q)\rho\left(\sum_{\beta\vDash n}\prod_{i=1}^{\ell(\beta)}L_{\gamma^i(\alpha^r,\beta^r)}\left[\frac{1}{1-q}\right] M_{\beta^r}[X]\right)\\
& =(1-q)^n\sum_{\alpha\vDash n} c_{\alpha,P}(q)\sum_{\beta\vDash n}\prod_{i=1}^{\ell(\beta)}L_{\gamma^i(\alpha^r,\beta^r)}\left[\frac{1}{1-q}\right] M_\beta[X]\\
& =(1-q)^n\sum_{\alpha\vDash n} c_{\alpha,P}(q)\sum_{\beta\vDash n}\prod_{i=1}^{\ell(\beta)}L_{\gamma^i(\alpha,\beta)^r}\left[\frac{1}{1-q}\right] M_\beta[X]\\
\text{(using \eqref{eq:L_plethysm})}& =(1-q)^n\sum_{\alpha\vDash n} c_{\alpha,P}(q)\sum_{\beta\vDash n}\prod_{i=1}^{\ell(\beta)}q^{\eta((\gamma^i(\alpha,\beta)^r)^r)}h_{\beta_i}\left[\frac{1}{1-q}\right] M_\beta[X]\\
& =(1-q)^n\sum_{\alpha\vDash n} c_{\alpha,P}(q)\sum_{\beta\vDash n}q^{\eta(\gamma(\alpha,\beta))} h_{\beta}\left[\frac{1}{1-q}\right] M_\beta[X]\\
\text{(using \eqref{eq:hn_ps})}& =\sum_{\alpha\vDash n} c_{\alpha,P}(q)\sum_{\beta\vDash n}q^{\eta(\gamma(\alpha,\beta))} \frac{1}{[\beta_1]_q!\cdots [\beta_{\ell(\beta)}]_q!} M_\beta[X]\\
& =\sum_{\beta\vDash n}\sum_{\alpha\vDash n} c_{\alpha,P}(q)q^{\eta(\gamma(\alpha,\beta))} \frac{1}{[\beta_1]_q!\cdots [\beta_{\ell(\beta)}]_q!} M_\beta[X]\\
& =\sum_{\beta\vDash n} \sum_{w\in R(\beta)}q^{\inv_G(w)} M_\beta[X] ,
\end{align*}
where in the last equality we used Theorem~\ref{thm:MainFormula}.
 
Now recall that 
\[  \mathrm{LLT}_G[X;q]=\sum_{\kappa \in \mathsf{C}(G)}q^{\mathsf{inv}_G(\kappa)}x_\kappa\]
is a quasisymmetric function (see Theorem~\ref{thm:LLT_fundamental}), hence the coefficient of $M_\beta[X]$ in its expansion in the monomial basis is the coefficient of the monomial $x^\beta:=x_1^{\beta_1}x_2^{\beta_2}\cdots $, which is apparently 
\[\sum_{w\in R(\beta)}q^{\inv_G(w)}, \]
completing the proof of \eqref{eq:main_identity}.
\end{proof}

\section{Expansions in the $\Psi_\alpha$}

In \cite{Ballantine_et_al} the authors study a family of quasisymmetric functions that they call \emph{type 1 quasisymmetric power sums}, and they denote $\Psi_\alpha$. Actually $\{\Psi_\alpha\mid \alpha\text{ composition}\}$ is a basis of $\mathrm{QSym}$, and these quasisymmetric functions refine the power symmetric functions, i.e.\ for any partition $\lambda\vdash n$
\begin{equation} \label{eq:psi_plambda}
\mathop{\sum_{\alpha\vDash n}}_{\lambda(\alpha)=\lambda}\Psi_\alpha=p_\lambda,
\end{equation}
where $\lambda(\alpha)$ the unique partition obtained by rearranging in weakly decreasing order the parts of $\alpha$, and the $p_\lambda=p_{\lambda_1}p_{\lambda_2}\cdots$ are the usual \emph{power symmetric functions}.

For example $p_{(2,2,1)}=\Psi_{(2,2,1)}+\Psi_{(2,1,2)}+\Psi_{(1,2,2)}$.

The following definitions appear in \cite{Shareshian_Wachs_Advances}*{Section~7}, in a slightly different language. 

Given $G=([n],E)$ a graph and $\sigma\in\mathfrak{S}_n$ a permutation, we say that $r\in [n]$ is a \emph{left-to-right $G$-maximum} if for every $s\in [r-1]$ we have $\sigma(s)<\sigma(r)$ and $\{\sigma(s),\sigma(r)\}\notin E$. Notice that $1$ is always a left-to-right $G$-maximum, that we call \emph{trivial}. We say that $i\in [n-1]$ is a \emph{$G$-descent} if $i\in \widetilde{\mathsf{Des}}_G(\sigma)$, i.e.\ $\sigma(i)>\sigma(i+1)$ and $\{\sigma(i),\sigma(i+1)\}\notin E$. 

Given a composition $\alpha=(\alpha_1,\alpha_2,\dots,\alpha_k)\vDash n$, let $\mathcal{N}_{G,\alpha}$ be the set of $\sigma\in \mathfrak{S}_n$ such that if we break $\sigma=\sigma(1)\sigma(2)\cdots \sigma(n)$ into contiguous segments of lengths $\alpha_1,\alpha_2,\dots,\alpha_k$, each contiguous segment has neither a $G$-descent nor a nontrivial left-to-right $G$-maximum. 

For example if $G=([7],E:=\{(1,2),(1,3),(1,4),(2,3),(3,4),(4,5),(4,6),(5,6),(5,7)\})$ and $\alpha=(1,2,1,3)\vDash 7$, then $\sigma=2567143\in \mathcal{N}_{G,\alpha}$: the contiguous segments are $2$, $56$, $7$ and $143$, now the segment $56$ does not have a nontrivial left-to-right $G$-maximum, since $\{5,6\}\in E$, the segment $143$ does not have a nontrivial left-to-right $G$-maximum, since $\{1,4\}\in E$, nor a $G$-descent, since $\{3,4\}\in E$. On the other hand $\tau=5267143\notin \mathcal{N}_{G,\alpha}$: the contiguous segments are $5$, $26$, $7$ and $143$, but now the segment $26$ has a nontrivial left-to-right $G$-maximum, since $\{2,6\}\notin E$.

Given a composition $\alpha$, define $z_\alpha:=z_{\lambda(\alpha)}$, where, as usual, for every partition $\lambda\vdash n$, if $m_i$ denotes the number of parts of $\lambda$ equal to $i$, then $z_\lambda:=\prod_{i=1}^nm_i!\cdot i^{m_i}$.

Finally, recall the involution $\omega:\mathrm{QSym}\to \mathrm{QSym}$ from Section~\ref{sec:qsym}.

We state our first conjecture.

\begin{conjecture} \label{conj:interval_psi_exp}
	For any interval graph $G=([n],E)$ we have
	\[\omega \chi_G[X;q]=\sum_{\alpha\vDash n}\frac{\Psi_\alpha}{z_{\alpha}}\sum_{\sigma \in \mathcal{N}_{G,\alpha}}q^{\widetilde{\mathsf{inv}}_G(\sigma)}.\]
\end{conjecture}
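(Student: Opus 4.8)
The plan is to deduce this (stated) conjecture from a finer conjectural statement at the level of increasing spanning forests --- precisely Conjecture~\ref{conj:forest_psi_expansion} --- together with the forest expansion of $\chi_G[X;q]$. Granting that refinement, what is left is a bookkeeping argument. First I would apply the involution $\omega=\rho\psi$ to the formula $\chi_G[X;q]=\sum_{F\in\mathsf{ISF}(G)}q^{\mathsf{wt}_G(F)}\mathcal{Q}_F^{(G)}$ of Theorem~\ref{thm:chrom_forests_formula}, obtaining
\[\omega\chi_G[X;q]=\sum_{F\in\mathsf{ISF}(G)}q^{\mathsf{wt}_G(F)}\,\rho\psi\,\mathcal{Q}_F^{(G)}.\]
By Theorem~\ref{thm:PhiG_surjective} every $F\in\mathsf{ISF}(G)$ equals $\Phi_G(f_G(F))$ with $f_G(F)\in\mathfrak{S}_n$, and by Proposition~\ref{prop:Phi_Coinv_inverval} together with Remark~\ref{rmk:CoInvG_of_F} one has $\mathsf{wt}_G(F)=\mathsf{coinv}_G(f_G(F))=|\mathsf{CoInv}_G(F)|$ and $\mathsf{Inv}_G(f_G(F))=E\setminus\mathsf{CoInv}_G(F)=:\mathsf{Inv}_G(F)$. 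Substituting Conjecture~\ref{conj:forest_psi_expansion} (with $\tau=f_G(F)$) for $\rho\psi\,\mathcal{Q}_F^{(G)}$ and interchanging the two summations gives
\[\omega\chi_G[X;q]=\sum_{\alpha\vDash n}\frac{\Psi_\alpha}{z_\alpha}\sum_{F\in\mathsf{ISF}(G)}q^{|\mathsf{CoInv}_G(F)|}\,\#\{\sigma\in\mathcal{N}_{G,\alpha}\mid\mathsf{CoInv}_G(\sigma^{-1})=\mathsf{Inv}_G(F)\}.\]

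Next I would reindex the inner double sum by $\sigma$ instead of $F$. The key observation is that $F\mapsto\mathsf{CoInv}_G(F)$ is a bijection from $\mathsf{ISF}(G)$ onto $\mathsf{CoInv}(G)$ (by Theorem~\ref{thm:PhiG_surjective} and Proposition~\ref{prop:Phi_Coinv_inverval}), and that for every $\pi\in\mathfrak{S}_n$ the set $\mathsf{Inv}_G(\pi)$ again lies in $\mathsf{CoInv}(G)$: sending $\pi$ to the permutation $i\mapsto n+1-\pi(i)$ converts $G$-inversions into $G$-coinversions, so $\mathsf{Inv}_G(\pi)=\mathsf{CoInv}_G(i\mapsto n+1-\pi(i))$. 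Hence for each $\sigma\in\mathcal{N}_{G,\alpha}$ there is a unique $F=F(\sigma)\in\mathsf{ISF}(G)$ with $\mathsf{CoInv}_G(F)=\mathsf{Inv}_G(\sigma^{-1})$ --- equivalently $\mathsf{CoInv}_G(\sigma^{-1})=\mathsf{Inv}_G(F)$ --- and then $q^{|\mathsf{CoInv}_G(F)|}=q^{|\mathsf{Inv}_G(\sigma^{-1})|}=q^{\mathsf{inv}_G(\sigma^{-1})}=q^{\widetilde{\mathsf{inv}}_G(\sigma)}$ by \eqref{eq:invtilde}. The inner double sum therefore collapses to $\sum_{\sigma\in\mathcal{N}_{G,\alpha}}q^{\widetilde{\mathsf{inv}}_G(\sigma)}$, which is exactly the asserted identity.

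The genuine obstacle is Conjecture~\ref{conj:forest_psi_expansion} itself. For a Dyck graph $G$, summing the forest statement over $\mathsf{ISF}(G)$ recovers Athanasiadis's theorem on the power-sum expansion of $\chi_G[X;q]$, but his proof goes through the Schur expansion of $\chi_G$, which is unavailable here since our $\chi_G$ is only quasisymmetric. A plausible route would be to push the $L$-expansion of $\mathcal{Q}_F^{(G)}$ from Theorem~\ref{thm:forest_fundamental} through the transition matrix from the $L_\alpha$ to the $\Psi_\alpha$ basis and then match the outcome against the $G$-descent / left-to-right $G$-maximum combinatorics defining $\mathcal{N}_{G,\alpha}$; I would expect the modified Foata bijection $\varphi_G$ of Section~\ref{sec:Foata}, or a variant of it that respects the splitting of $\sigma$ into contiguous blocks of lengths $\alpha_1,\dots,\alpha_k$, to be the mechanism that produces the statistic $\widetilde{\mathsf{inv}}_G$ on $\mathcal{N}_{G,\alpha}$. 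Establishing this forest-level statement is where a genuinely new argument is needed; everything above it is formal manipulation.
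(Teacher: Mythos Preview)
Your proposal is correct and follows essentially the same route as the paper: both treat the statement as a formal consequence of Conjecture~\ref{conj:forest_psi_expansion}, applying $\omega$ to the forest expansion of Theorem~\ref{thm:chrom_forests_formula}, substituting the forest-level $\Psi_\alpha$-expansion, and then reindexing by $\sigma\in\mathcal{N}_{G,\alpha}$ using the bijection between $\mathsf{ISF}(G)$ and $\mathsf{CoInv}(G)$ together with $\widetilde{\mathsf{inv}}_G(\sigma)=\mathsf{inv}_G(\sigma^{-1})$. Your justification that $\mathsf{Inv}_G(\pi)\in\mathsf{CoInv}(G)$ via $i\mapsto n+1-\pi(i)$ is a minor variation on the paper's observation that $\mathsf{Inv}(G)=\{\mathsf{Inv}_G(\tau)\mid\tau\in f_G(\mathsf{ISF}(G))\}$, but the substance is identical, and you correctly identify that the genuine content lies in the (still open) forest-level Conjecture~\ref{conj:forest_psi_expansion}.
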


This conjecture was inspired by the following formula, that was conjecture by Shareshian and Wachs \cite{Shareshian_Wachs_Advances}*{Conjecture~7.6} and later proved by Athanasiadis \cite{Athanasiadis}.
\begin{theorem} \label{thm:athanasiadis}
	For any Dyck graph $G=([n],E)$ we have
\[\omega \chi_G[X;q]=\sum_{\lambda\vdash n}\frac{p_\lambda}{z_{\lambda}}\sum_{\sigma \in \mathcal{N}_{G,\lambda}}q^{\widetilde{\mathsf{inv}}_G(\sigma)}.\]
\end{theorem}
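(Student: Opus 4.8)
The plan is to recover the power sum expansion of $\omega\chi_G[X;q]$ from its fundamental expansion, passing through the type $1$ quasisymmetric power sums $\Psi_\alpha$ of \cite{Ballantine_et_al}. Since $G$ is a Dyck graph, $\chi_G[X;q]\in\mathrm{Sym}$, so $\omega$ acts on it as the ordinary involution of $\mathrm{Sym}$ and agrees with $\psi$ (Remark~\ref{rmk:psi_restrict_to_omega}); hence, by the Shareshian--Wachs form of the fundamental expansion recalled in Remark~\ref{rmk:SW_notation},
\[
\omega\chi_G[X;q]=\psi\chi_G[X;q]=\sum_{\sigma\in\mathfrak{S}_n}q^{\widetilde{\mathsf{inv}}_G(\sigma)}\,L_{n,\,n-\widetilde{\mathsf{Des}}_G(\sigma)}.
\]
Because the left-hand side is symmetric, its power sum coefficients can be read off from the $\Psi$-expansion: if $f=\sum_\lambda c_\lambda p_\lambda\in\mathrm{Sym}$, then by \eqref{eq:psi_plambda} we get $f=\sum_\alpha c_{\lambda(\alpha)}\Psi_\alpha$, so the coefficient of $\Psi_\alpha$ in a symmetric function depends only on $\lambda(\alpha)$ and equals the coefficient of $p_{\lambda(\alpha)}$. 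Thus it suffices, for each partition $\lambda\vdash n$ (written as a weakly decreasing composition), to compute the coefficient of $\Psi_\lambda$ in $\omega\chi_G[X;q]$ and show it equals $\tfrac{1}{z_\lambda}\sum_{\sigma\in\mathcal{N}_{G,\lambda}}q^{\widetilde{\mathsf{inv}}_G(\sigma)}$; multiplying by $z_\lambda p_\lambda/z_\lambda=p_\lambda$ and summing then gives the formula.

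For the extraction step I would use the transition between the fundamental basis and the $\Psi$-basis of $\mathrm{QSym}$ from \cite{Ballantine_et_al} (dually, between the ribbon basis and the type $1$ power sum basis of $\mathrm{NSym}$). Substituting $S=n-\widetilde{\mathsf{Des}}_G(\sigma)$ turns the coefficient of $\Psi_\lambda$ into a signed, $q^{\widetilde{\mathsf{inv}}_G(\sigma)}$-weighted sum over \emph{all} $\sigma\in\mathfrak{S}_n$, where the sign and multiplicity of a given $\sigma$ depend only on how $\widetilde{\mathsf{Des}}_G(\sigma)$ meets the subdivision of $[n]$ into contiguous blocks of sizes $\lambda_1,\dots,\lambda_{\ell(\lambda)}$ and its coarsenings. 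The remaining task is to collapse this alternating sum to the subsum over $\mathcal{N}_{G,\lambda}$.

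That last step is the crux, and the place where the Dyck hypothesis on $G$ is used essentially. I would construct a sign-reversing involution on the permutations \emph{not} in $\mathcal{N}_{G,\lambda}$: given such a $\sigma$, find the first contiguous block containing a $G$-descent or a nontrivial left-to-right $G$-maximum, and perform a local modification inside that block (an adjacent transposition resolving the offending pair, possibly combined with a reshuffle across the neighbouring block boundary). The two requirements this move must satisfy simultaneously are that it is a genuine fixed-point-free involution carrying the expected sign change \emph{and} that it leaves $\widetilde{\mathsf{inv}}_G$ unchanged; the latter is exactly where one invokes that $G$ is the incomparability graph of a $(\mathbf{3}+\mathbf{1})$- and $(\mathbf{2}+\mathbf{2})$-free poset, which makes $\widetilde{\mathsf{inv}}_G$ insensitive to such moves. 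The fixed points are then precisely the elements of $\mathcal{N}_{G,\lambda}$. I expect designing this involution to be the main obstacle; Athanasiadis's original argument \cite{Athanasiadis} instead runs the analogous cancellation on the tableau side, starting from the Schur expansion \cite{Shareshian_Wachs_Advances}*{Theorem~6.3} together with the Murnaghan--Nakayama rule, where the sign bookkeeping is more transparent. Within the present framework one could alternatively deduce the theorem from Conjecture~\ref{conj:interval_psi_exp} restricted to Dyck graphs (using \eqref{eq:psi_plambda} and the constancy of $\sum_{\sigma\in\mathcal{N}_{G,\alpha}}q^{\widetilde{\mathsf{inv}}_G(\sigma)}$ on classes $\{\alpha:\lambda(\alpha)=\lambda\}$, a consequence of symmetry), but this merely relocates the combinatorial difficulty rather than removing it.
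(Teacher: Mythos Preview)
The paper does not give its own proof of Theorem~\ref{thm:athanasiadis}; it is quoted as a result of Athanasiadis with a citation to \cite{Athanasiadis}, and the only information the paper adds (in the introduction) is that Athanasiadis's argument starts from the Schur expansion of $\chi_G$ (\cite{Shareshian_Wachs_Advances}*{Theorem~6.3}) combined with the Murnaghan--Nakayama rule. You already identify this correctly in your final paragraph, so there is nothing further in the paper to compare against.

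That said, your proposal is not a proof but an outline with a self-acknowledged gap. The framing steps---passing from Corollary~\ref{cor:chrom_fundamentals} to the $\psi$-image, using symmetry of $\chi_G$ for Dyck $G$ so that the $\Psi_\alpha$-coefficients depend only on $\lambda(\alpha)$, and reading off the $p_\lambda$-coefficient from the $\Psi_\lambda$-coefficient---are all fine. But the entire combinatorial content of the theorem lives in the step you label ``the crux'': producing a sign-reversing involution on $\mathfrak{S}_n\setminus\mathcal{N}_{G,\lambda}$ that preserves $\widetilde{\mathsf{inv}}_G$ while flipping the sign coming from the $L\!\to\!\Psi$ transition. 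You do not construct it, and the move you describe (an adjacent transposition at the first offending position) does not, as stated, obviously preserve $\widetilde{\mathsf{inv}}_G$ nor match up with the alternating signs from the $\Psi$-expansion; getting both simultaneously is exactly the difficulty Athanasiadis sidesteps by working on the tableau side. Your alternative of invoking Conjecture~\ref{conj:interval_psi_exp} is, as you note, circular here. So what you have is a reasonable program, plus a correct summary of the published proof, but not an argument that stands on its own.
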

Notice that thanks to Theorem~\ref{thm:athanasiadis} and \eqref{eq:psi_plambda}, our Conjecture~\ref{conj:interval_psi_exp} is equivalent to the following tempting conjecture.
\begin{conjecture}
For any Dyck graph $G=([n],E)$ and any composition $\alpha\vDash n$ we have
\[ \sum_{\sigma \in \mathcal{N}_{G,\alpha}}q^{\widetilde{\mathsf{inv}}_G(\sigma)}=\sum_{\sigma \in \mathcal{N}_{G,\lambda(\alpha)}}q^{\widetilde{\mathsf{inv}}_G(\sigma)}. \]
\end{conjecture}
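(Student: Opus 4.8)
The plan is to reduce the identity to a purely local statement about two consecutive blocks and then to attack that statement, either by an inversion--preserving bijection or by transporting Athanasiadis's argument to the finer $\Psi$--basis. Since the adjacent transpositions generate all rearrangements of the parts of $\alpha$, it suffices to show that $\sum_{\sigma\in\mathcal{N}_{G,\alpha}}q^{\widetilde{\mathsf{inv}}_G(\sigma)}$ is unchanged when two consecutive parts $\alpha_i,\alpha_{i+1}$ are swapped. Write $a=\alpha_i$, $b=\alpha_{i+1}$, $p=\alpha_1+\cdots+\alpha_{i-1}$, and let $P=\llbracket p+1,p+a+b\rrbracket$ be the (contiguous) set of positions occupied by these two blocks. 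The first observation is that membership in $\mathcal{N}_{G,\alpha}$ decouples across blocks, since ``having neither a $G$-descent nor a nontrivial left-to-right $G$-maximum'' in a segment is a property of the word read in that segment alone; hence, fixing the restriction of $\sigma$ to the positions outside $P$ (with all the other blocks valid), the remaining data is exactly a word of length $a+b$ on the value set $U:=\{\sigma(j)\mid j\in P\}$, parsed as (first $a$)(last $b$) with both parts valid. Relabelling $U$ monotonically to $[a+b]$ turns the induced subgraph of $G$ on $U$ into a Dyck graph $H$ on $[a+b]$, and this data is precisely an element of $\mathcal{N}_{H,(a,b)}$. Moreover, because $P$ is an interval of positions, every edge contributing to $\widetilde{\mathsf{inv}}_G(\sigma)$ with exactly one endpoint among the values in $P$ is inverted or not depending only on $U$ and on $\sigma|_{P^c}$, not on the arrangement inside $P$; so $\widetilde{\mathsf{inv}}_G(\sigma)$ splits as a constant depending on $\sigma|_{P^c}$ plus $\widetilde{\mathsf{inv}}_H$ of the local word. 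Summing, the conjecture for all Dyck graphs and all compositions becomes equivalent to the special case $\sum_{\sigma\in\mathcal{N}_{H,(a,b)}}q^{\widetilde{\mathsf{inv}}_H(\sigma)}=\sum_{\sigma\in\mathcal{N}_{H,(b,a)}}q^{\widetilde{\mathsf{inv}}_H(\sigma)}$ for every Dyck graph $H$ on $[a+b]$ and every $a,b$ (the special case is itself an instance of the conjecture, which is why the two are equivalent). Equivalently, in symmetric--function terms, it is enough to prove that $\sum_{\alpha\vDash n}\frac{\Psi_\alpha}{z_\alpha}\sum_{\sigma\in\mathcal{N}_{G,\alpha}}q^{\widetilde{\mathsf{inv}}_G(\sigma)}$ lies in $\mathrm{Sym}$, since by Theorem~\ref{thm:athanasiadis} and \eqref{eq:psi_plambda} this would force it to equal $\omega\chi_G$ and the $\Psi_\alpha$-coefficients to be constant on rearrangement classes.

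The second step is to prove the two-block identity. The most appealing route is a $\widetilde{\mathsf{inv}}_H$-preserving bijection $\mathcal{N}_{H,(a,b)}\to\mathcal{N}_{H,(b,a)}$. A naive swap of the two blocks fails: it changes $\widetilde{\mathsf{inv}}_H$ by the number of edges $\{u,v\}$ with $u<v$, $u$ in the $b$-block and $v$ in the $a$-block, minus the analogous count with the roles of the blocks reversed, and this difference need not vanish. So the bijection must also redistribute which values land in which block, and the difficulty is to do so canonically while simultaneously controlling the validity constraints and the inversion count. The interval structure of $H$ — in particular the order property exploited in Remark~\ref{remark:order_P} — should make such a redistribution possible, perhaps via an insertion procedure in the spirit of the modified Foata map of Section~\ref{sec:Foata}, or via an induction that peels off the vertex $1$ (or the vertex $a+b$) of $H$ and tracks how it splits between the two blocks. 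An alternative, non-bijective route is to compute the $\Psi_\alpha$-expansion of $\omega\chi_G$ directly from the fundamental expansion in Corollary~\ref{cor:chrom_fundamentals}, using the transition from the Gessel basis to the type~1 quasisymmetric power sums of \cite{Ballantine_et_al}, and to match the resulting coefficients with $\frac{1}{z_\alpha}\sum_{\sigma\in\mathcal{N}_{G,\alpha}}q^{\widetilde{\mathsf{inv}}_G(\sigma)}$; by Athanasiadis's Theorem~\ref{thm:athanasiadis} this would then pin down the equality of the $\alpha$-indexed and the $\lambda(\alpha)$-indexed sums.

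The main obstacle is this second step. In the bijective approach the cross-edges between the two blocks are the exact source of the trouble, and no obvious symmetry of permutations exchanges the two validity constraints while fixing the edge-inversion statistic. In the algebraic approach the hard part is the combinatorial identification of whatever index set the $L\to\Psi$ change of basis produces with the sets $\mathcal{N}_{G,\alpha}$, precisely because the defining condition of $\mathcal{N}_{G,\alpha}$ (a descent/left-to-right-maximum condition read blockwise) is not manifestly block-order-independent. It seems likely that resolving this requires essentially the same idea needed for Conjecture~\ref{conj:forest_psi_expansion}, which for Dyck graphs would in fact supersede the present statement.
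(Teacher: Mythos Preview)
This statement is presented in the paper as a \emph{conjecture}, not a theorem: the paper observes only that it is equivalent to Conjecture~\ref{conj:interval_psi_exp} via Theorem~\ref{thm:athanasiadis} and \eqref{eq:psi_plambda}, and offers no proof. So there is no argument in the paper to compare your proposal against.

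As for the proposal itself, your reduction to the two-block case is sound. The decoupling of the $\mathcal{N}_{G,\alpha}$ condition across blocks is immediate from the definition; the fact that the induced subgraph of a Dyck graph on an arbitrary value set $U$ (monotonically relabelled) is again a Dyck graph follows from the Hessenberg description $(i,j)\in E\Leftrightarrow i<j\le h(i)$ with $h$ weakly increasing; and your splitting of $\widetilde{\mathsf{inv}}_G$ into a local part plus a cross term depending only on $U$ and $\sigma|_{P^c}$ is correct because $P$ is a contiguous interval of positions, so for a cross pair only the fact ``$i\in P$, $j\notin P$'' matters, not the exact location of $i$ within $P$. The observation that the two-block identity for every Dyck graph $H$ on $[a+b]$ is itself an instance of the conjecture, hence equivalent to it, is also correct.

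However, as you yourself say plainly, the second step is missing: you have no bijection $\mathcal{N}_{H,(a,b)}\to\mathcal{N}_{H,(b,a)}$ preserving $\widetilde{\mathsf{inv}}_H$, and no algebraic substitute. The two suggested routes (a Foata-style redistribution, or reading off $\Psi_\alpha$-coefficients from the $L\to\Psi$ transition) are only hopes, not arguments. So what you have written is a (correct and potentially useful) reformulation of the conjecture, not a proof; the genuine content of the conjecture is entirely contained in the two-block case, which remains open.
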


In fact, we have a more general conjecture involving our quasiymmetric functions $\mathcal{Q}_{F}^{(G)}$.

\begin{conjecture}  \label{conj:forest_psi_expansion}
	For any interval graph $G=([n],E)$ and any permutation $\tau\in \mathfrak{S}_n$ we have
	\begin{equation} \label{eq:psi_forest}
	\omega \mathcal{Q}_{\Phi_G(\tau)}^{(G)}=\sum_{\alpha\vDash n}\frac{\Psi_\alpha}{z_{\alpha}}\# \{ \sigma \in \mathcal{N}_{G,\alpha}\mid \mathsf{CoInv}_G(\sigma^{-1})=\mathsf{Inv}_G(\tau)\}.
\end{equation}
\end{conjecture}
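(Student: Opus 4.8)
One possible route to \eqref{eq:psi_forest} is to push the strategy behind Theorem~\ref{thm:main_theorem} down to the level of a single increasing spanning forest and then cross it with the monomial expansion of the $\Psi_\alpha$. The first step is to rewrite the left-hand side. By Theorem~\ref{thm:forest_fundamental} together with Remark~\ref{rmk:CoInvG_of_F} (so that $\mathsf{CoInv}_G(\Phi_G(\tau))=\mathsf{CoInv}_G(\tau)$), and reindexing $\sigma\mapsto\sigma^{-1}$, one has
\[\mathcal{Q}_{\Phi_G(\tau)}^{(G)}=\mathop{\sum_{\sigma\in\mathfrak{S}_n}}_{\mathsf{CoInv}_G(\sigma^{-1})=\mathsf{CoInv}_G(\tau)}L_{n,\mathsf{Des}_G(\sigma)};\]
applying $\omega=\rho\psi$ and using the identity $\rho\psi\,L_{n,\mathsf{Des}_G(\sigma)}=L_{\alpha_G(\overline{\sigma})}$ already computed inside the proof of Theorem~\ref{thm:main_theorem} (with $\alpha_G$ as in \eqref{eq:alphaG_definition}) gives
\[\omega\,\mathcal{Q}_{\Phi_G(\tau)}^{(G)}=\mathop{\sum_{\sigma\in\mathfrak{S}_n}}_{\mathsf{CoInv}_G(\sigma^{-1})=\mathsf{CoInv}_G(\tau)}L_{\alpha_G(\overline{\sigma})}.\]
Thus both sides of \eqref{eq:psi_forest} are explicit linear combinations of basis elements of $\mathrm{QSym}$ --- of the $\{L_\alpha\}$ on the left and of the $\{\Psi_\alpha\}$ on the right --- and proving the conjecture amounts to re-expanding the left-hand side in the $\{\Psi_\alpha\}$ basis and recognizing the coefficients as the stated $\mathcal{N}_{G,\alpha}$-counts.

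To do this I would pass to the monomial basis, where the two sides are easiest to compare. On the left, $\omega L_\gamma=L_{\gamma^t}=\sum_{\beta\succeq\gamma^t}M_\beta$. On the right one uses the transition proved in \cite{Ballantine_et_al},
\[\Psi_\alpha=z_\alpha\sum_{\beta\succeq\alpha}\Bigl(\prod_{i=1}^{\ell(\beta)}\ \prod_{t=1}^{\ell(\gamma^i(\alpha,\beta))}\frac{1}{\gamma^i_1+\cdots+\gamma^i_t}\Bigr)M_\beta,\]
where $\gamma^i_t$ denotes the $t$-th part of the composition $\gamma^i(\alpha,\beta)$ of Section~\ref{sec:qsym}. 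Substituting this and equating the coefficient of each $M_\beta$ turns \eqref{eq:psi_forest} into a finite identity, for every $\beta\vDash n$: a plain count of permutations $\sigma$ with $\mathsf{CoInv}_G(\sigma^{-1})=\mathsf{CoInv}_G(\tau)$ and $\alpha_G(\overline{\sigma})^t\preceq\beta$ must equal a rational-coefficient sum over $\alpha\preceq\beta$ of the numbers $\#\{\mu\in\mathcal{N}_{G,\alpha}\mid\mathsf{CoInv}_G(\mu^{-1})=\mathsf{Inv}_G(\tau)\}$. This is the forest-level, $\alpha$-refined counterpart of the Mahonian identity Theorem~\ref{thm:MainFormula}, with the $q$-factorial weights $[\beta_1]_q!\cdots[\beta_{\ell(\beta)}]_q!$ there replaced by the products $\prod_t(\gamma^i_1+\cdots+\gamma^i_t)$ above; moreover the decomposition of a permutation into its contiguous segments of lengths $\beta$ --- via the sets $S_i^{\sigma,\beta}$ used in the proof of Theorem~\ref{thm:MainFormula} --- should reduce the general $\beta$ to the case $\beta=(n)$.

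The hard part is then the case $\beta=(n)$: one must exhibit, for an interval graph $G$ and a fixed coinversion set $\mathsf{CoInv}_G(\tau)$, a bijective (or sign-reversing) argument identifying the permutations $\sigma$ with $\mathsf{CoInv}_G(\sigma^{-1})=\mathsf{CoInv}_G(\tau)$, sorted by the composition $\alpha_G(\overline{\sigma})$, with the $\mathcal{N}_{G,(n)}$-permutations (no $G$-descent, no nontrivial left-to-right $G$-maximum) carrying the prescribed coinversion data. The natural candidate is a refinement of Kasraoui's modified Foata bijection $\varphi_G$ of Section~\ref{sec:Foata}, applied blockwise to the subgraphs $G(S_i^{\sigma,\beta})$, and I expect the real obstruction to be twofold: first, $\varphi_G$ is only known to transport the \emph{statistic} $\widetilde{\mathsf{inv}}_G+\widetilde{\maj}_{G^c}$, whereas \eqref{eq:psi_forest} is refined by the full coinversion \emph{set} --- which is precisely the data that pins down $\Phi_G(\tau)$ by Remark~\ref{rmk:CoInvG_of_F} --- so a version of $\varphi_G$ compatible with $\mathsf{CoInv}_G$ and respecting the $\gamma(\alpha,\beta)$ bookkeeping blockwise is needed; second, the condition defining $\mathcal{N}_{G,\alpha}$ has to be controlled for \emph{all} coarsenings $\alpha\preceq\beta$ simultaneously. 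A more algebraic alternative, closer in spirit to Athanasiadis's proof of Theorem~\ref{thm:athanasiadis}, would be to expand each $L_{\alpha_G(\overline{\sigma})}$ directly into the $\Psi$-basis via the quasisymmetric Murnaghan--Nakayama rule of \cite{Ballantine_et_al} and to build a sign-reversing involution on the resulting signed objects whose fixed points are indexed by $\mathcal{N}_{G,\alpha}$; here the interval (chordality) hypothesis, used as in Remark~\ref{rem:interval_clique} and Remark~\ref{remark:order_P}, should be what makes the involution well defined. As consistency checks one verifies that summing \eqref{eq:psi_forest} over representatives of the coinversion classes with weight $q^{\mathsf{coinv}_G}$ reproduces Conjecture~\ref{conj:interval_psi_exp}, that specializing $G$ to a Dyck graph collapses the statement to Theorem~\ref{thm:athanasiadis}, and that both sides agree on small examples.
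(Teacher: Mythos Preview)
The statement you are addressing is labelled \emph{Conjecture}~\ref{conj:forest_psi_expansion}, and the paper does \emph{not} prove it. The only result the paper establishes about it is the Lemma immediately following it, showing that Conjecture~\ref{conj:forest_psi_expansion} implies Conjecture~\ref{conj:interval_psi_exp}. There is therefore no ``paper's own proof'' to compare your proposal with.

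What you have written is not a proof either, and you are candid about this: you correctly isolate the reduction to a monomial-coefficient identity and then explicitly flag the case $\beta=(n)$ as the genuine obstacle, naming the two obstructions (Kasraoui's $\varphi_G$ controls only the statistic $\widetilde{\mathsf{inv}}_G+\widetilde{\maj}_{G^c}$, not the full coinversion set; and one must handle all coarsenings $\alpha\preceq\beta$ simultaneously). This is an honest and reasonable research outline, but it leaves the conjecture exactly where the paper leaves it: open.

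A few comments on the details of your outline. Your rewriting of the left-hand side is correct: applying $\omega=\rho\psi$ to the fundamental expansion of $\mathcal{Q}_{\Phi_G(\tau)}^{(G)}$ does give $\sum_\sigma L_{\alpha_G(\overline{\sigma})}$ over the appropriate coinversion class, and after the substitution $\mu=\overline{\sigma}$ the condition $\mathsf{CoInv}_G(\sigma^{-1})=\mathsf{CoInv}_G(\tau)$ becomes $\mathsf{CoInv}_G(\mu^{-1})=\mathsf{Inv}_G(\tau)$, matching the right-hand side (since for permutations $\mathsf{Inv}_G$ and $\mathsf{CoInv}_G$ are complementary in $E$). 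One small slip: the monomial expansion of a fundamental is over \emph{refinements}, $L_{\gamma^t}=\sum_{\beta\preceq\gamma^t}M_\beta$, not coarsenings as you wrote; this does not affect the shape of the argument but you should fix the direction of $\preceq$ before pursuing the comparison. Finally, your observation that the $\Psi$-expansion coefficients replace the $q$-factorials of Theorem~\ref{thm:MainFormula} by the ``hook-type'' products $\prod_t(\gamma^i_1+\cdots+\gamma^i_t)$ is suggestive, but note that Theorem~\ref{thm:MainFormula} is already a \emph{summed} identity over all coinversion classes, whereas \eqref{eq:psi_forest} fixes a single class---so even the $\beta=(n)$ case here is strictly finer than Theorem~\ref{thm:MainFormula_beta_(n)}, and a set-level refinement of $\varphi_G$ would indeed be new.
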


\begin{lemma}
Conjecture~\ref{conj:forest_psi_expansion} implies Conjecture~\ref{conj:interval_psi_exp}.
\end{lemma}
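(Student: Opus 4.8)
The plan is to apply the involution $\omega$ to the forest expansion $\chi_G[X;q]=\sum_{F\in\mathsf{ISF}(G)}q^{\mathsf{wt}_G(F)}\mathcal{Q}_F^{(G)}$ of Theorem~\ref{thm:chrom_forests_formula}, substitute the conjectured identity \eqref{eq:psi_forest} term by term, and then reindex so that the forests disappear. Since $\omega$ is linear, choosing for each $F\in\mathsf{ISF}(G)$ a permutation $\tau_F\in\mathfrak{S}_n$ with $\Phi_G(\tau_F)=F$ (for instance $\tau_F=f_G(F)$) and using $\omega\mathcal{Q}_{\Phi_G(\tau_F)}^{(G)}=\omega\mathcal{Q}_F^{(G)}$, Conjecture~\ref{conj:forest_psi_expansion} gives
\begin{equation*}
\omega\chi_G[X;q]=\sum_{\alpha\vDash n}\frac{\Psi_\alpha}{z_\alpha}\sum_{F\in\mathsf{ISF}(G)}q^{\mathsf{wt}_G(F)}\,\#\{\sigma\in\mathcal{N}_{G,\alpha}\mid\mathsf{CoInv}_G(\sigma^{-1})=\mathsf{Inv}_G(\tau_F)\},
\end{equation*}
after interchanging the two (finite) sums. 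Comparing with Conjecture~\ref{conj:interval_psi_exp}, it then suffices to prove, for each fixed $\alpha\vDash n$, that
\begin{equation*}
\sum_{F\in\mathsf{ISF}(G)}q^{\mathsf{wt}_G(F)}\,\#\{\sigma\in\mathcal{N}_{G,\alpha}\mid\mathsf{CoInv}_G(\sigma^{-1})=\mathsf{Inv}_G(\tau_F)\}=\sum_{\sigma\in\mathcal{N}_{G,\alpha}}q^{\widetilde{\inv}_G(\sigma)}.
\end{equation*}

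The bookkeeping facts I would record first are: (a) by Theorem~\ref{thm:PhiG_surjective} and Proposition~\ref{prop:Phi_Coinv_inverval}, the assignment $F\mapsto\mathsf{CoInv}_G(F)$ (with $\mathsf{CoInv}_G(F):=\mathsf{CoInv}_G(f_G(F))$ as in Remark~\ref{rmk:CoInvG_of_F}) is a bijection of $\mathsf{ISF}(G)$ onto $\mathsf{CoInv}(G)$, the fibers of $\Phi_G|_{\mathfrak{S}_n}$ being exactly the coinversion classes; (b) for any representative $\tau_F$, since $\tau_F$ is a proper coloring one has $\mathsf{Inv}_G(\tau_F)=E\setminus\mathsf{CoInv}_G(\tau_F)=E\setminus\mathsf{CoInv}_G(F)$, so this set depends only on $F$, and moreover $\mathsf{wt}_G(F)=\mathsf{coinv}_G(\tau_F)=|\mathsf{CoInv}_G(F)|$ by the last assertion of Proposition~\ref{prop:Phi_Coinv_inverval}; (c) for any $\pi\in\mathfrak{S}_n$, writing $\pi'$ for the permutation $i\mapsto n+1-\pi(i)$, a direct check gives $\mathsf{CoInv}_G(\pi')=\mathsf{Inv}_G(\pi)$, so in particular $\mathsf{Inv}_G(\pi)\in\mathsf{CoInv}(G)$.

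With these in hand I would swap the order of summation in the fixed-$\alpha$ identity to sum over pairs $(F,\sigma)$ with $\sigma\in\mathcal{N}_{G,\alpha}$ and $\mathsf{CoInv}_G(\sigma^{-1})=\mathsf{Inv}_G(\tau_F)$, and then reindex by $\sigma$ alone. As $\sigma^{-1}$ is a proper coloring, the condition $\mathsf{CoInv}_G(\sigma^{-1})=\mathsf{Inv}_G(\tau_F)=E\setminus\mathsf{CoInv}_G(F)$ is equivalent to $\mathsf{CoInv}_G(F)=\mathsf{Inv}_G(\sigma^{-1})$; by (c) the right-hand side lies in $\mathsf{CoInv}(G)$, so by (a) there is exactly one $F=F(\sigma)$ satisfying it, and by (b) its weight is $\mathsf{wt}_G(F(\sigma))=|\mathsf{Inv}_G(\sigma^{-1})|=\inv_G(\sigma^{-1})=\widetilde{\inv}_G(\sigma)$, the last equality being \eqref{eq:invtilde}. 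Hence every $\sigma\in\mathcal{N}_{G,\alpha}$ contributes exactly $q^{\widetilde{\inv}_G(\sigma)}$, which proves the fixed-$\alpha$ identity; multiplying by $\Psi_\alpha/z_\alpha$ and summing over $\alpha\vDash n$ yields Conjecture~\ref{conj:interval_psi_exp}. The argument is essentially a re-indexing once the forests/coinversions dictionary of Proposition~\ref{prop:Phi_Coinv_inverval} is invoked, so I do not expect a real obstacle; the only point requiring care is checking that the exponent $\mathsf{wt}_G(F(\sigma))$ matches $\widetilde{\inv}_G(\sigma)$ on the nose, which is exactly what (b) together with \eqref{eq:invtilde} provides.
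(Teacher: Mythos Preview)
Your proof is correct and follows essentially the same approach as the paper: both start from Theorem~\ref{thm:chrom_forests_formula}, substitute \eqref{eq:psi_forest}, use the bijection between $\mathsf{ISF}(G)$ and $\mathsf{CoInv}(G)$ from Proposition~\ref{prop:Phi_Coinv_inverval} and Theorem~\ref{thm:PhiG_surjective} to show each $\sigma\in\mathcal{N}_{G,\alpha}$ is matched with a unique forest, and then verify via \eqref{eq:invtilde} that the weight of that forest equals $\widetilde{\inv}_G(\sigma)$. Your presentation is slightly more explicit in isolating the bookkeeping facts (a)--(c), and your observation (c) that $\mathsf{Inv}_G(\pi)=\mathsf{CoInv}_G(\pi')$ with $\pi'(i)=n+1-\pi(i)$ is a clean way to see that $\mathsf{Inv}_G(\sigma^{-1})\in\mathsf{CoInv}(G)$, whereas the paper phrases this via the equality $\mathsf{Inv}(G)=\{E\setminus S:S\in\mathsf{CoInv}(G)\}$; but these are the same argument packaged differently.
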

\begin{proof}
Given an interval graph $G=([n],E)\in\mathcal{IG}_n$, Proposition~\ref{prop:Phi_Coinv_inverval} implies that $\mathsf{wt}_G(\Phi_G(\tau))=\mathsf{coinv}_G(\tau)$, while \eqref{eq:invtilde} implies $\widetilde{\inv}_G(\sigma)=\inv_G(\sigma^{-1})$. Now observe that $\sigma\in \mathfrak{S}_n\subset \mathsf{PC}(G)$, hence $\mathsf{CoInv}_G(\sigma)=E\setminus \mathsf{Inv}_G(\sigma)$, so if $\mathsf{CoInv}_G(\sigma^{-1})=\mathsf{Inv}_G(\tau)$, then
\[\inv_G(\sigma^{-1})=|E|- \mathsf{coinv}_G(\sigma^{-1})=|E|- \inv_G(\tau)=\mathsf{coinv}_G(\tau).\]

Hence multiplying Conjecture~\ref{conj:forest_psi_expansion} by $q^{\mathsf{wt}_G(\Phi_G(\tau))}$ we get
\begin{align*}
q^{\mathsf{wt}_G(\Phi_G(\tau))}\omega \mathcal{Q}_{\Phi_G(\tau)}^{(G)} & =q^{\mathsf{wt}_G(\Phi_G(\tau))}\sum_{\alpha\vDash n}\frac{\Psi_\alpha}{z_{\alpha}}\# \{ \sigma \in \mathcal{N}_{G,\alpha}\mid \mathsf{CoInv}_G(\sigma^{-1})=\mathsf{Inv}_G(\tau)\}\\
& =\sum_{\alpha\vDash n}\frac{\Psi_\alpha}{z_{\alpha}}\mathop{\sum_{\sigma \in \mathcal{N}_{G,\alpha}}}_{ \mathsf{CoInv}_G(\sigma^{-1})=\mathsf{Inv}_G(\tau)}q^{\mathsf{coinv}_G(\tau)}\\
& =\sum_{\alpha\vDash n}\frac{\Psi_\alpha}{z_{\alpha}}\mathop{\sum_{\sigma \in \mathcal{N}_{G,\alpha}}}_{ \mathsf{CoInv}_G(\sigma^{-1})=\mathsf{Inv}_G(\tau)}q^{\widetilde{\mathsf{inv}}_G(\sigma)}.
\end{align*}

Now notice that because of Theorem~\ref{thm:PhiG_surjective}, the set
\[\mathsf{Inv}(G):=\{\mathsf{Inv}_G(\sigma)\mid \sigma\in \mathfrak{S}_n\}\]
is such that
\[\mathsf{Inv}(G)=\{E\setminus \mathsf{CoInv}_G(\sigma)\mid \sigma\in f_G(\mathsf{ISF}(G))\}=\{\mathsf{Inv}_G(\sigma)\mid \sigma\in f_G(\mathsf{ISF}(G))\}.\]

Hence, using Theorems~\ref{thm:chrom_forests_formula} and \ref{thm:PhiG_surjective}, we have
\begin{align*}
\omega \chi_G[X;q]& = \sum_{F\in \mathsf{ISF}(G)}q^{\mathsf{wt}_G(F)} \omega \mathcal{Q}_F^{(G)}\\
& = \sum_{\tau\in f_G(\mathsf{ISF}(G))}q^{\mathsf{wt}_G(\Phi_G(\tau))} \omega  \mathcal{Q}_{\Phi_G(\tau)}^{(G)}\\
& = \sum_{\tau\in f_G(\mathsf{ISF}(G))}\sum_{\alpha\vDash n}\frac{\Psi_\alpha}{z_{\alpha}}\mathop{\sum_{\sigma \in \mathcal{N}_{G,\alpha}}}_{ \mathsf{CoInv}_G(\sigma^{-1})=\mathsf{Inv}_G(\tau)}q^{\widetilde{\mathsf{inv}}_G(\sigma)}\\
& = \sum_{\alpha\vDash n}\frac{\Psi_\alpha}{z_{\alpha}} \sum_{\sigma \in \mathcal{N}_{G,\alpha}} q^{\widetilde{\mathsf{inv}}_G(\sigma)}
\end{align*}
which is precisely Conjecture~\ref{conj:interval_psi_exp}.

\end{proof}

\section{Speculative comments}

In the light of the results and conjectures of the present article, it is natural to wonder how far both conjectures and results about Dyck graphs can be extended to interval graphs. We add here some speculative comments.

First of all notice that the results in this article are ``tight'', in the sense that, experimentally, as soon as $G$ is not an interval graph, our statements of both theorems and conjectures tend to be false for $G$.

About the $e$-positivity conjecture of Shareshian and Wachs for Dyck graphs: we wonder if there exists a quasisymmetric refinement of the elementary basis for which the positivity conjecture extends to interval graphs. This would extend the Shareshian-Wachs conjecture to a quasisymmetric situation, and it could possibly suggest a new approach to the original conjecture. Similarly, we wonder if such a refinement would give a positivity of the quasisymmetric functions $\mathrm{LLT}_G[X;q+1]$ for $G$ an interval graph, in analogy with the results in \cite{Alexandersson_Sulzgruber}. 

We wonder if there is an extension of the formula of Abreu and Nigro \cite{Abreu_Nigro_Forests} for $\chi_G[X;q]$ to interval graphs $G$. More generally, we wonder if there is a nice way to relate their formula to our Theorem~\ref{thm:chrom_forests_formula}, as they have some obvious similarities.

Finally, the identity \eqref{eq:CM_identity} of Carlsson and Mellit is somehow related to the equivariant cohomology of Hessenberg varieties. These varieties are associated to so called \emph{Hessenberg vectors} (or \emph{Hessenberg functions}), which correspond naturally to Dyck paths, hence to Dyck graphs. Our interval graphs are similarly associated to vectors that are analogous to Hessenberg vectors. Unfortunately the natural way to associate a variety to these vectors (i.e.\ following the definition of the Hessenberg varieties) does not seem to give new interesting geometric objects. Nonetheless, one can construct a ``cohomology ring'' following GKM theory (see e.g.\ \cite{guaypaquet_second_pf}*{Section~8}). It would be interesting to see if these rings share some properties with the ones associated to Dyck graphs. In particular if they are related to our Theorem~\ref{thm:main_theorem}.

\bibliographystyle{amsalpha}
\bibliography{Biblebib}

\end{document}